\documentclass[11pt,a4paper]{article}

\usepackage[T1]{fontenc}
\usepackage{microtype}
\usepackage{newtxtext}
\usepackage[scaled=0.92]{helvet}
\usepackage{dsfont}

\usepackage{amsmath}
\usepackage{mathtools}
\usepackage{amsthm}
\usepackage{newtxmath}
\usepackage{bm}
\usepackage{xfrac}

\usepackage{geometry}
\usepackage{graphicx}
\usepackage{caption}
\usepackage[section]{placeins}
\usepackage{enumitem}
\usepackage{xcolor}
\usepackage{tikz}
\usetikzlibrary{patterns,calc,backgrounds}

\usepackage{natbib}
\usepackage[nottoc]{tocbibind}
\usepackage[hidelinks]{hyperref}
\usepackage[capitalize]{cleveref}

\title{Unrestricted conditional maximum likelihood estimation for a Fréchet regression model}
\author{%
  Marcio Reverbel\thanks{Corresponding author. Email: \texttt{marcio.reverbel@kuleuven.be}}
  \\[0.3ex]
  \small Department of Mathematics, KU Leuven \\
  \small Celestijnenlaan 200B, 3001 Heverlee, Belgium
  \and
  Johan Segers\thanks{Email: \texttt{jjjsegers@kuleuven.be}}
  \\[0.3ex]
  \small Department of Mathematics, KU Leuven \\
  \small Celestijnenlaan 200B, 3001 Heverlee, Belgium \\
  \small LIDAM/ISBA, UCLouvain, Louvain-la-Neuve, Belgium
}
\date{\today}

\theoremstyle{plain}
\newtheorem{condition}{Condition}[section]
\newtheorem{lemma}{Lemma}[section]
\newtheorem{definition}{Definition}[section]
\newtheorem{proposition}{Proposition}[section]
\newtheorem{theorem}{Theorem}[section]

\crefname{condition}{Condition}{Conditions}
\Crefname{condition}{Condition}{Conditions}

\crefname{equation}{Eq.}{Eqs.}
\Crefname{equation}{Eq.}{Eqs.}

\newenvironment{lproof}[1][Proof]{%
  \begin{proof}[#1]%
}{%
  \end{proof}%
}
\newenvironment{pproof}[1][Proof]{%
  \begin{proof}[#1]%
}{%
  \end{proof}%
}

\DeclareMathOperator*{\argmax}{arg\,max}
\DeclareMathOperator{\hzn}{hzn}
\DeclareMathOperator{\csm}{csm}
\DeclareMathOperator{\dir}{dir}

\newcommand{\pr}{\operatorname{\mathsf{P}}}
\newcommand{\ex}{\operatorname{\mathsf{E}}}
\newcommand{\var}{\operatorname{\mathsf{Var}}}
\newcommand{\norm}[1]{\left\| #1 \right\|}

\newcommand{\mm}[2]{m_{#1}({#2})}
\newcommand{\m}[1]{m_{\theta}({#1})}
\newcommand{\tr}{\intercal}
\newcommand{\tm}[1][l]{\tilde m^{(#1)}}
\newcommand{\om}[1][l]{\overline m^{(#1)}}

\newcommand{\eps}{\varepsilon}
\newcommand{\Eta}{\bm{\mathrm H}}
\newcommand{\sTheta}{\Theta^\circ}
\newcommand{\mTheta}{\Theta_0}
\newcommand{\bmeta}{{\bm{\eta}}}
\newcommand{\bbeta}{{\bm{\beta}}}
\newcommand{\bgamma}{{\bm{\gamma}}}
\newcommand{\Ud}{{U_{\bmeta_\dagger}}}
\newcommand{\Uhzn}{{U_{\hzn}}}
\newcommand{\mle}{\hat \bmeta_n}
\newcommand{\est}{\hat \theta}
\newcommand{\nme}{\tilde \bmeta_n}
\newcommand{\Fi}{\cI_{\bmeta_\dagger}}
\newcommand{\Hes}{H_{\bmeta_\dagger}}
\newcommand{\Pl}{P_Z^{\otimes l}}

\newcommand{\reals}{\mathbb{R}}
\newcommand{\naturals}{\mathbb{N}}
\newcommand{\Rp}{{\reals}^{p}}
\newcommand{\Rpos}{\reals_{>0}}
\newcommand{\sphere}[1][p-1]{\mathbb{S}^{#1}}
\newcommand{\cA}{\mathcal A}
\newcommand{\cF}{\mathcal F}
\newcommand{\cI}{\mathcal I}
\newcommand{\cM}{\mathcal M}
\newcommand{\cN}{\mathcal N}
\newcommand{\cW}{\mathcal W}
\newcommand{\cZ}{\mathcal Z}
\newcommand{\leb}{\operatorname{Leb}}

\newcommand{\0}{\bm 0}
\newcommand{\ba}{\bm a}
\newcommand{\bb}{\bm b}
\newcommand{\bt}{\bm t}
\newcommand{\bv}{\bm v}
\newcommand{\bw}{\bm w}
\newcommand{\bx}{\bm x}
\newcommand{\by}{\bm y}
\newcommand{\bz}{\bm z}
\newcommand{\beps}{\bm \eps}
\newcommand{\bG}{\bm G}
\newcommand{\bW}{\bm W}
\newcommand{\bX}{\bm X}
\newcommand{\bY}{\bm Y}
\newcommand{\bZ}{\bm Z}
\newcommand{\y}{\bm{\ln y}}
\newcommand{\Y}{\bm{\ln Y}}

\newcommand{\mA}{\mathbf A}
\newcommand{\mO}{\mathbf O}
\newcommand{\mQ}{\mathbf Q}
\newcommand{\mX}{\mathbf X}
\newcommand{\oX}{\mathbf x}
\newcommand{\mLambda}{\bm\Lambda}

\newcommand{\ad}{\alpha_\dagger}
\newcommand{\bd}{\bbeta_\dagger}

\newcommand{\abr}[1]{\left|#1\right|}
\newcommand{\cbr}[1]{\left\{#1\right\}}
\newcommand{\rbr}[1]{\left(#1\right)}
\newcommand{\sbr}[1]{\left[#1\right]}

\newcommand{\mtext}[1]{\quad \text{#1} \quad}

\newcommand{\rank}{\operatorname{rank}}

\newcommand{\Frechet}{\operatorname{Fr\acute{e}chet}}
\newcommand{\Gumbel}{\operatorname{Gumbel}}
\newcommand{\Exp}{\operatorname{Exp}}
\newcommand{\Normal}{\operatorname{N}}
\newcommand{\pto}{\xrightarrow{\pr}}
\newcommand{\dto}{\rightsquigarrow}

\begin{document}

\maketitle

\begin{abstract}
  Heavy-tailed response variables are routinely modeled as a function of
  covariates,
  but the asymptotic
  theory available for the resulting maximum likelihood estimators sidesteps
  certain technical difficulties by restricting the parameters to a compact set,
  which is not the natural parameter space. We consider a Fr\'echet model whose
  scale parameter depends log-linearly on a vector of covariates, while the tail
  parameter is constant, and we study the conditional maximum likelihood estimator
  of the tail parameter and the regression coefficients jointly. We show that this
  estimator is consistent over the full, noncompact parameter space, and that it is
  asymptotically normal, with the inverse Fisher information as limiting covariance
  matrix. Consistency is obtained by a specific compactification of the parameter space relying on the cosmic closure of Euclidean space. The extension of Wald's consistency theorem needed for that step, from single observations to blocks of observations, is stated in an
  abstract setting and is possibly of independent interest.
\end{abstract}

\noindent\textbf{Keywords:}
asymptotic normality; compactification; distributional regression;
Fr\'echet distribution; maximum likelihood estimation

\section{Introduction}
\label{sec:introduction}

Statistical modeling of heavy-tailed phenomena plays a central role in finance, environmental sciences, and insurance mathematics. Extreme Value Theory (EVT) provides a mathematically grounded framework for describing tail behavior and extrapolating beyond the observed range. Two main approaches are commonly used to model extremes: the \emph{block maxima method} (BM), which considers the maxima of equal-sized, nonoverlapping blocks of observations, and the \emph{peaks-over-threshold method} (POT), which considers observations exceeding a sufficiently high threshold. Under appropriate conditions, the limiting distribution of suitably normalized block maxima belongs to the three-parameter Generalized Extreme Value (GEV) family, while excesses over a sufficiently high threshold are asymptotically described by a Generalized Pareto distribution (GP). These distributions are therefore commonly used as approximating models for extreme observations. The heavy-tailed case corresponds to a positive extreme value index $\xi > 0$, where $\xi$ is the shape parameter common to both the GEV and GP families. In the GEV case, this can be reparameterized as a Fréchet distribution through an appropriate location-scale transformation.

When it comes to EVT, even classical results such as consistency and asymptotic normality of the maximum likelihood estimator (MLE) have proved difficult to obtain. The asymptotic normality of the MLE of the GEV distribution, and the consistency and asymptotic normality of the MLE for block maxima extracted from a time series in the max domain of attraction of the Fréchet distribution, were established only in the last decade \citep{BucherSegers2017GEV, BucherSegers2018}. Existence and uniqueness of the MLE for the GP distribution fitted to excesses over high thresholds were established only in 2025 \citep{DombryPadoanRizzelli2025}.
Despite substantial progress, several fundamental challenges remain open. Applications increasingly require regression-type extensions with covariate-dependent model parameters, such as the modeling of climate extremes based on geographical and meteorological variables \citep[see, for instance,][]{coles2001introduction,cooley2009extreme,davison1990models,smith1989extreme,zanger2024regional}. Popular model specifications include a linear model
for the location parameter and log-linear models
for the scale parameters of the GEV and GP distributions.
Covariate models for the shape parameter are less commonly used. Such regression models have been extended in several directions: random rather than fixed effects (i.e., the regression coefficients are themselves realizations of latent random variables), and generalized additive models, which provide more flexibility in modeling the link between regressor and response \citep{chavez2005generalized}. In fixed-design distributional regression, consistency of M-estimators was established by \citet{BucherSegersStaud2025}, extending previous results to the challenging EVT setting. This includes strong consistency of the \emph{Conditional Maximum Likelihood Estimator} (CMLE) for GEV and GP models, obtained under the assumption that the true parameters lie in a compact subset of the parameter space.

The purpose of this paper is to establish the consistency and asymptotic normality of the CMLE over the unrestricted parameter space for a Fréchet model with covariate-dependent scale parameter. We adopt a log-linear model for the scale parameter, as is commonly done in practice. Let $\{(Y_i, \bX_i) : i = 1, \ldots, n\}$ be independent and identically distributed random pairs taking values in $\Rpos \times \Rp$ and such that for all $\bx \in \Rp$ the conditional distribution of $Y_i$ given $\bX_i = \bx$ is Fréchet with shape parameter $\alpha \in \Rpos$ and scale parameter $\exp(\bbeta^\tr \bx)$, that is,
\begin{equation}
    \label{eq:model}
    \pr \rbr{ Y_i \le y \mid \bX_i = \bx}
    =
    \exp \sbr{ - \rbr{ \frac{y}{\exp(\bbeta^\tr \bx)}}^{-\alpha}},
    \qquad y \in \Rpos,
\end{equation}
where $\bbeta \in \Rp$ is the vector of regression coefficients. The full parameter vector is
\[
    \bmeta
    = (\alpha, \bbeta) \in \Eta := \Rpos \times \Rp.
\]
In this model, the difficulties associated with the parameter-dependent support of the general GEV family are absent. As in \citet{BucherSegers2017GEV}, we work under exact specification and therefore do not address the approximation error inherent in the domain-of-attraction framework. The difficulty considered is of a different nature: the natural parameter space for the regression coefficients and tail parameter is noncompact. Consequently, standard consistency results for M-estimators on compact parameter spaces do not apply.

Our main result establishes the consistency of the CMLE under fairly broad assumptions on the covariates. The main device is a compactification of the parameter space, which allows us to show that sequences escaping to the boundary cannot asymptotically maximize the likelihood. We also provide an extension of Wald's consistency theorem (Theorem~5.14 in \citealp{van2000asymptotic}), which we use to establish consistency of the CMLE. This extension may be of independent interest. Once consistency is obtained, asymptotic normality of the CMLE follows from standard theory (Theorem~5.41 in the same reference).

The remainder of the paper is organized as follows. \Cref{sec:framework} introduces the model, the associated objective function, the identifiability condition on the covariates, and the existence, uniqueness and definition of the CMLE. \Cref{sec:wald} presents the extension of Wald's consistency theorem in an abstract setting. \Cref{sec:consistency} constructs the compactification of the parameter space and establishes consistency of the CMLE, and \Cref{sec:asymptotic_normality} establishes its asymptotic normality.
\Cref{sec:conclusion} concludes. All proofs are collected in \Cref{sec:proofs_framework,sec:proofs_wald,sec:proofs_consistency,sec:proofs_normality}.

Throughout we use the following notation. We write $\naturals = \cbr{1, 2, \ldots}$ for the set of positive integers and $P_W$ for the distribution of a random object $W$. The $p$-variate observations are seen as \emph{column} vectors, so that the design matrix $\mX$ formed by a sample or a block of them has the transposed observations $\bX_i^\tr$ as its rows; this is in line with our use of $\ex[\bX \bX^\tr]$ later on. Boldface capitals denote random elements and the matching lowercase letters their realizations, as in $\mX$ and $\oX$, or $\bX$ and $\bx$.

\section{Fréchet Regression Model and Conditional Maximum Likelihood Estimator}
    \label{sec:framework}
Consider the model presented in \Cref{eq:model}. We are interested in showing consistency and asymptotic normality of the CMLE, $\mle$, for $\bmeta = (\alpha, \bbeta) \in \Eta.$ This section collects what the model itself provides: the objective function and its properties (\Cref{subsec:objective_function}), the condition on the covariates under which the parameter is identifiable (\Cref{subsec:identifiability}), the existence, uniqueness and definition of the estimator at a fixed sample size (\Cref{subsec:cmle}), and the asymptotic criterion function whose unique point of maximum is the true parameter (\Cref{subsec:asymptotic_criterion}).

\subsection{The Model and the Objective Function}
    \label{subsec:objective_function}
The (conditional) density function associated with \cref{eq:model} is
\begin{equation}
    \label{eq:pdf}
    f_{\bmeta}^{Y\mid \bX}(y \mid \bx)
    = \frac{\alpha}{\sigma(\bx)}
    \left(\frac{y}{\sigma(\bx)}\right)^{-1-\alpha}
    \exp\!\left[-\left(\frac{y}{\sigma(\bx)}\right)^{-\alpha}\right],
    \qquad y>0
\end{equation}
where $\sigma(\bx) = \sigma_{\bbeta}(\bx) = \exp(\bbeta^\tr \bx)$. A dagger marks the true value of a quantity: $\bmeta_\dagger = (\ad, \bd) \in \Eta$ is the parameter that generated the data, whereas $\bmeta = (\alpha, \bbeta)$ denotes an arbitrary element of $\Eta$. We use $\pr_{\bmeta_\dagger}$ and $\ex_{\bmeta_\dagger}$ to highlight when a probability or expectation is taken with respect to the law underlying the data generating process.
The \emph{objective function} associated with the CMLE is the conditional log-density of the model. We take it with respect to the dominating measure
\begin{equation}
    \label{eq:dominating_measure}
    \mu(dy) := y^{-1} \, dy
    \mtext{on} \Rpos ,
\end{equation}
and not with respect to the Lebesgue measure, with the two measures being equivalent on $\Rpos$. The two choices differ only by the term $\ln y$, which does not depend on $\bmeta$, and hence both approaches lead to the same maximizer; but $\mu$ is the scale-invariant measure on $\Rpos$, hence the natural dominating measure for a scale family. Furthermore, the objective function is bounded above under $\mu$. To see this, let $g(t)=t-\exp(t)$. By \Cref{eq:pdf}, the density of $P_{\bmeta}^{(Y\mid\bX)}(\,\cdot \mid \bx)$ with respect to $\mu$ is $y \mapsto y \, f_{\bmeta}^{Y\mid \bX}(y \mid \bx)$, so that
    \begin{align}
        \label{eq:objective_function}
        m(\bmeta \mid y,\bx)
        & := \ln \rbr{y \, f_{\bmeta}^{Y\mid \bX}(y \mid \bx)}
        = \ln \alpha + g\rbr{\alpha\rbr{\bbeta^\tr\bx-\ln y}} .
    \end{align}

Two elementary bounds on $g$ are used repeatedly, here and in \Cref{sec:consistency}:
\begin{equation}
    \label{eq:g_bounds}
    g(t) \leq -1
    \mtext{and}
    g(t) \leq -\abr{t} ,
    \qquad t \in \reals .
\end{equation}
By the first bound in \Cref{eq:g_bounds}, the objective function is bounded above,
\begin{equation}
    \label{eq:m_upper_bound}
    m(\bmeta \mid y, \bx) \leq \ln \alpha - 1 ,
    \qquad (y, \bx) \in \Rpos \times \Rp .
\end{equation}
It follows that $\ex_{\bmeta_\dagger}\sbr{m(\bmeta \mid Y, \bX)}$ is well defined in $[-\infty, \ln \alpha - 1]$ for every $\bmeta \in \Eta$ and with no assumptions on the distribution of $\bX$. In general, the log-density with respect to the Lebesgue measure admits no such bound.

Averaging $m$ over a sample of size $n$ yields the criterion function to be maximized. Write $(\by, \oX) \in \Rpos^n \times \reals^{n \times p}$ for a realization of the sample, with $\by = (y_1, \ldots, y_n)^\tr$, $\oX = (\bx_1, \ldots, \bx_n)^\tr$ and $\y = (\ln y_1, \ldots, \ln y_n)^\tr$, and put
\begin{equation}
    \label{eq:n_objective_function}
    \tm[n](\bmeta \mid \by, \oX) = \frac1n \sum_{i=1}^n m(\bmeta \mid y_i, \bx_i),
    \qquad \bmeta \in \Eta.
\end{equation}
At the random sample, with $\bY := (Y_1, \ldots, Y_n)^\tr$ and $\mX := (\bX_1, \ldots, \bX_n)^\tr$, we write
\[
    M_n(\bmeta) := \tm[n](\bmeta \mid \bY, \mX)
    = \frac1n \sum_{i=1}^n m(\bmeta \mid Y_i, \bX_i) ,
    \qquad \bmeta \in \Eta ,
\]
for the resulting random function. Maximizing $M_n$ is the same as maximizing the conditional log-likelihood or the conditional log-likelihood ratio.

\subsection{Identifiability}
    \label{subsec:identifiability}
We work in a random design and impose the following condition on the covariates $\bX_i$:

\begin{condition}
    \label{cond:identifiability}
     The distribution of \(\bX_i\) is not concentrated on any subspace of $\Rp$, i.e.,
    \[
        \pr[\ba^\tr \bX_i = 0] < 1, \quad \forall \; \ba \in  \Rp \setminus \!\cbr{\0}.
    \]
\end{condition}

\Cref{cond:identifiability} ensures identifiability of the model parameter $\bmeta$.

\begin{proposition}[Identifiability]
    \label{prop:identifiability}
    Consider the model in \Cref{eq:model} and let $\bmeta_a = (\alpha_a, \bbeta_a)$ and $\bmeta_b = (\alpha_b, \bbeta_b)$ be elements of $\Eta$. The following two statements are equivalent:
    \begin{enumerate}[label=(\roman*)]
        \item For almost every realization of $\bX$, $$f_{\bmeta_a}^{Y\mid \bX}(\,\cdot \mid \bX) = f_{\bmeta_b}^{Y\mid \bX}(\,\cdot \mid \bX) \quad \mu\text{-a.e.};$$
        \item $\alpha_a = \alpha_b$ and $\rbr{\bbeta_a - \bbeta_b}^\tr \bX = 0$ almost surely.
    \end{enumerate}
    Consequently, the parameter $\bmeta$ is identifiable if and only if \Cref{cond:identifiability} holds.
\end{proposition}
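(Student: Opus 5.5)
The direction (ii)$\Rightarrow$(i) is immediate. If $\alpha_a = \alpha_b$ and $\bbeta_a^\tr \bX = \bbeta_b^\tr \bX$ almost surely, then on that event of probability one the scale parameters $\sigma_{\bbeta_a}(\bX) = \exp(\bbeta_a^\tr \bX)$ and $\sigma_{\bbeta_b}(\bX)$ coincide. Together with the common shape, \Cref{eq:pdf} gives identical conditional densities for every $y>0$.

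For (i)$\Rightarrow$(ii) I will fix a realization $\bx$ outside the exceptional null set. Equality of densities $\mu$-a.e. (equivalently Lebesgue-a.e.) implies equality of the conditional distribution functions in \Cref{eq:model}, since both are integrals of the densities. Hence for all $y>0$
\[
  \rbr{y e^{-\bbeta_a^\tr \bx}}^{-\alpha_a} = \rbr{y e^{-\bbeta_b^\tr \bx}}^{-\alpha_b}.
\]
Taking logarithms and putting $t = \ln y$ gives the affine identity $\alpha_a(t - \bbeta_a^\tr\bx) = \alpha_b(t-\bbeta_b^\tr\bx)$ for all $t\in\reals$. Matching the coefficients of $t$ yields $\alpha_a=\alpha_b$. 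Matching the constant terms then yields $\bbeta_a^\tr \bx = \bbeta_b^\tr\bx$. The shape equality does not depend on $\bx$, and it holds for at least one $\bx$ because the exceptional set is null and not everything. The second equality holds for almost every $\bx$, which is exactly (ii). There is no real obstacle here; the only care needed is to pass from a.e.\ equality of densities to equality of the continuous distribution functions everywhere.

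For the final claim, identifiability means that (i) forces $\bmeta_a=\bmeta_b$. By the equivalence, this holds precisely when $(\bbeta_a-\bbeta_b)^\tr\bX=0$ a.s.\ implies $\bbeta_a=\bbeta_b$, that is, when $\pr[\ba^\tr\bX=0]=1$ forces $\ba=\0$. This is \Cref{cond:identifiability}.

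For the converse, suppose the condition fails. Then there is $\ba\neq\0$ with $\ba^\tr\bX=0$ a.s. The distinct parameters $(\alpha,\bbeta)$ and $(\alpha,\bbeta+\ba)$ then satisfy (ii), hence (i), so the model is not identifiable.
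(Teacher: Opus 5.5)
Your proof is correct and follows the paper's argument essentially step for step. It uses the same pointwise-in-$\bx$ reduction for (i)$\Rightarrow$(ii) and the same shifted-parameter counterexample $(\alpha, \bbeta \pm \ba)$ for the final claim; the only difference is that you verify identifiability of the two-parameter Fréchet family explicitly (equal distribution functions, then matching the coefficients of an affine function of $\ln y$), where the paper simply invokes it as known.
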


A regression model typically includes an intercept, that is, the first coordinate of $\bX$ is deterministic and equal to one. Write $\bX = (1, \bX_{-1})^\tr$, with $\bX_{-1}$ the vector of the $p-1$ remaining covariates. \Cref{cond:identifiability} then turns into an \emph{affine} condition on $\bX_{-1}$: it holds if and only if the distribution of $\bX_{-1}$ is not concentrated on any affine hyperplane of $\reals^{p-1}$.

\subsection{Existence and Uniqueness of the CMLE}
    \label{subsec:cmle}

We show that the criterion function of \Cref{eq:n_objective_function} attains its maximum and that the maximum is unique under non-restrictive assumptions on the sample.
For $\alpha \in \Rpos$ and $\bb \in \Rp$, define
\begin{equation}
    \label{eq:psi}
    \psi(\alpha, \bb \mid \by, \oX)
    := \tm[n]\rbr{(\alpha, \bb / \alpha) \mid \by, \oX}
    = \ln \alpha + \frac1n \sum_{i=1}^n g\rbr{\bb^\tr \bx_i - \alpha \ln y_i}.
\end{equation}

\begin{proposition}[Concavity, existence and uniqueness of the CMLE]
    \label{prop:existence_CMLE}
    Let $(\by, \oX) \in \Rpos^n \times \reals^{n \times p}$ be a realization of the sample and let $\tm[n]$ and $\psi$ be as in \Cref{eq:n_objective_function,eq:psi}. Consider the hypotheses
    \begin{enumerate}[label=(\alph*)]
        \item $\rank(\oX) = p$;
        \item $\y \notin \operatorname{col}(\oX)$.
    \end{enumerate}
    Then the following hold.
    \begin{enumerate}[label=(\roman*)]
        \item The function $\psi(\cdot, \cdot \mid \by, \oX)$ is concave on $\Rpos \times \Rp$; that is, the conditional log-likelihood is concave in the parametrization $(\alpha, \alpha \bbeta)$. Under~(a) it is strictly concave.
        \item Under~(b), the function $\tm[n](\cdot \mid \by, \oX)$ attains its supremum on $\Eta$, and its set of maximizers is
        \[
            \cbr{\hat\alpha} \times \rbr{\hat\bbeta + \ker(\oX)}
        \]
        for some $\hat\alpha \in \Rpos$ and $\hat\bbeta \in \Rp$, where $\ker(\oX) = \cbr{\bv \in \Rp : \oX \bv = \0}$.
        \item Under~(a) and~(b), the maximizer is unique, so that the CMLE
        \[
            \mle = \argmax_{\bmeta \in \Eta} \tm[n](\bmeta \mid \by, \oX)
        \]
        is well defined.
    \end{enumerate}
\end{proposition}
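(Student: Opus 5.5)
Work throughout in the reparametrization $(\alpha,\bb)=(\alpha,\alpha\bbeta)$. The map $(\alpha,\bbeta)\mapsto(\alpha,\alpha\bbeta)$ is a bijection of $\Eta$ onto itself, so maximizers of $\tm[n]$ correspond exactly to maximizers of $\psi$.

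For (i), note that $\ln\alpha$ is strictly concave and $g(t)=t-e^t$ satisfies $g''(t)=-e^t<0$. Each map $(\alpha,\bb)\mapsto\bb^\tr\bx_i-\alpha\ln y_i$ is linear, so $\psi$ is concave as a sum of concave functions. For strict concavity under~(a), take two distinct points and the segment joining them, with direction $(\delta,\bv)\neq 0$. If $\delta\ne0$, the term $\ln\alpha$ is strictly concave along the segment. If $\delta=0$, then $\bv\ne\0$, and by~(a) $\oX\bv\ne\0$, so some $\bv^\tr\bx_i\ne0$ and the corresponding term $g$ is strictly concave along the segment. Either way $\psi$ is strictly concave on the segment.

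For (ii), I would show that $\psi$ is coercive modulo $\ker(\oX)$. Since $\psi$ depends on $\bb$ only through $\oX\bb$, set $\bt=\oX\bb\in\operatorname{col}(\oX)$ and consider
\[
\phi(\alpha,\bt)=\ln\alpha+\tfrac1n\sum_i g(t_i-\alpha\ln y_i), \qquad (\alpha,\bt)\in\Rpos\times\operatorname{col}(\oX).
\]
Using $g(t)\le-\abr{t}$ from \Cref{eq:g_bounds}, I get $\phi(\alpha,\bt)\le \ln\alpha-\tfrac1n\norm{\bt-\alpha\y}_1$. Hypothesis~(b) gives $c:=\min\{\norm{\bt-\alpha\y}_1:\bt\in\operatorname{col}(\oX),\,\alpha=1\}>0$. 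By homogeneity,
\[
\norm{\bt-\alpha\y}_1\ge c\,\alpha
\qquad\text{and}\qquad
\norm{\bt-\alpha\y}_1\ge \norm{\bt}_1-\alpha\norm{\y}_1 .
\]
The first bound gives $\phi\le\ln\alpha-c\alpha/n\to-\infty$ as $\alpha\to\infty$. As $\alpha\to0$, $\phi\le\ln\alpha-1\to-\infty$ by \Cref{eq:g_bounds}. On $\alpha\in[\epsilon,K]$, combining the two bounds shows $\phi\to-\infty$ as $\norm{\bt}\to\infty$.

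Hence the superlevel sets of the continuous function $\phi$ are compact, and a maximizer $(\hat\alpha,\hat\bt)$ exists. The function $\phi$ is strictly concave: the argument of (i) applies because on $\operatorname{col}(\oX)$ a nonzero direction changes some $t_i$. So the maximizer is unique. Pulling back, the maximizers of $\psi$ are $\{\hat\alpha\}\times(\hat\bb+\ker\oX)$ with $\oX\hat\bb=\hat\bt$. Dividing by $\hat\alpha$, the maximizers of $\tm[n]$ are $\{\hat\alpha\}\times(\hat\bbeta+\ker\oX)$, where $\hat\bbeta=\hat\bb/\hat\alpha$ and $\ker\oX/\hat\alpha=\ker\oX$.

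For (iii), under~(a) we have $\ker\oX=\{\0\}$, so the set of maximizers is a single point.

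The main obstacle is the coercivity step, specifically the case $\alpha\to\infty$ with $\bt$ possibly growing along with it. Hypothesis~(b) enters exactly there, through the homogeneity lower bound $c\alpha$.
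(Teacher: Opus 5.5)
Your proposal is correct and follows essentially the paper's own route. You reparametrize to $(\alpha,\alpha\bbeta)$, pass to $\Rpos\times\operatorname{col}(\oX)$, and combine the bounds $g\le -1$ and $g\le -|t|$ with the positive $\ell_1$ distance supplied by~(b) to get compact superlevel sets; uniqueness then comes from strict concavity on $\Rpos\times\operatorname{col}(\oX)$, pulled back through $\ker(\oX)$. The only difference is cosmetic: you check strict concavity directly along segments, where the paper composes with an injective affine map.
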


Condition~(b) forces $\operatorname{col}(\oX) \neq \reals^n$ and hence $\rank(\oX) < n$; together with~(a) it therefore implies $n \geq p+1$. Without it the supremum may be infinite: if $\y = \oX \bbeta_0$ for some $\bbeta_0 \in \Rp$, then $\bbeta_0^\tr \bx_i = \ln y_i$ for every $i$, so that
$
    \tm[n]\rbr{(\alpha, \bbeta_0) \mid \by, \oX} = \ln \alpha - 1 \to + \infty,
$
as $\alpha \to +\infty$.
Without~(a) a maximizer still exists,
but $\ker(\oX)$ is then nontrivial and the maximizer is no longer unique.

Part~(i)
says that the calculation of the CMLE
is a convex program in the coordinates $(\alpha, \alpha \bbeta)$, so that there are no spurious local maxima, every stationary point is the global maximum, and a solver may be started anywhere.

Both hypotheses~(a) and~(b) are asymptotically free.
For later use, we state the following two properties for a generic sample size $k$.

\begin{lemma}[Design regularity]
    \label{lem:design_regularity}
    Let $\cbr{(Y_i, \bX_i)}_{i=1}^k$ be an iid sample such that $Y \mid \bX = \bx$ is distributed as in \Cref{eq:model}, and write
    \[
        \mX := \rbr{\bX_1, \ldots, \bX_k}^\tr \in \reals^{k \times p}
        \mtext{and}
        \Y := \rbr{\ln Y_1, \ldots, \ln Y_k}^\tr \in \reals^k .
    \]
    \begin{enumerate}[label=(\roman*)]
        \item If $k \geq p+1$, then
        \[
            \pr_{\bmeta_\dagger}\sbr{\Y \in \operatorname{col}(\mX)} = 0 ,
        \]
        whatever the distribution of $\bX$.
        \item If \Cref{cond:identifiability} is satisfied, then, almost surely, $\rank(\mX) = p$ for all $k$ large enough. In particular, $\pr\sbr{\rank(\mX) = p} \to 1$ as $k \to \infty$.
    \end{enumerate}
\end{lemma}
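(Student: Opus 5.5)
For part~(i) the plan is to condition on the design $\mX$ and exploit the fact that, given $\mX = \oX$, the coordinates $\ln Y_i$ are independent with continuous distributions. Indeed, by \Cref{eq:model}, $\alpha_\dagger(\ln Y_i - \bd^\tr \bx_i)$ is standard Gumbel given $\bX_i = \bx_i$. Hence, conditionally on $\mX = \oX$, the vector $\Y$ has a density with respect to Lebesgue measure on $\reals^k$: it is a product of shifted and scaled Gumbel densities, whatever $\oX$ is. For fixed $\oX$, the column space $\operatorname{col}(\oX)$ is a linear subspace of dimension at most $p < k$. It is therefore a Lebesgue null set in $\reals^k$, so the conditional probability that $\Y \in \operatorname{col}(\oX)$ is zero. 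Integrating over $P_{\mX}$ via Fubini, which is legitimate because $\{(\by,\oX): \y \in \operatorname{col}(\oX)\}$ is a closed and hence measurable set, gives $\pr_{\bmeta_\dagger}[\Y \in \operatorname{col}(\mX)] = 0$. No assumption on the law of $\bX$ enters.

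For part~(ii) the plan is to reduce to a statement about spanning. The matrix $\mX$ has rank $p$ if and only if $\bX_1,\dots,\bX_k$ span $\Rp$. The rank is nondecreasing in $k$ and bounded by $p$, so it suffices to show that almost surely the sequence $(\bX_i)_{i\ge1}$ eventually spans $\Rp$. Once the rank reaches $p$ at some random $K$, it stays $p$ for all $k \ge K$, and almost sure eventual occurrence implies $\pr[\rank(\mX)=p] \to 1$.

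To show eventual spanning, let $V_k = \operatorname{span}(\bX_1,\dots,\bX_k)$ and argue by induction on the dimension, with $K$ a stopping time. Suppose that at some stopping time $K_j$ we have $\dim V_{K_j} = j < p$. Because $V_{K_j}$ is a proper subspace, there is a nonzero $\ba \perp V_{K_j}$, and by \Cref{cond:identifiability} we get $\pr[\bX \in V_{K_j}] \le \pr[\ba^\tr\bX = 0] < 1$. The main obstacle is that $V_{K_j}$ is random. I will handle it as follows. Conditionally on $\mathcal F_{K_j}$, the future $\bX_i$ are iid and independent of $V_{K_j}$ (strong Markov / iid property at stopping times). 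Given this, the probability that all subsequent $\bX_i$ stay in $V_{K_j}$ is $\lim_m q^m = 0$ with $q = \pr[\bX \in V]|_{V=V_{K_j}} < 1$. So almost surely some later $\bX_i$ leaves $V_{K_j}$ and the dimension increases. After at most $p$ steps the span is $\Rp$ almost surely.

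A cleaner alternative that avoids stopping times is also available. Split the sequence into consecutive disjoint blocks of length $p$. The probability $r$ that a block of $p$ iid copies is linearly independent is the same for every block and is positive. Positivity follows by building the copies one at a time: at each step the next copy avoids the span of the previous ones with conditional probability $1 - \pr[\bX\in V] > 0$, again by \Cref{cond:identifiability}. The blocks are independent, so Borel--Cantelli (second lemma) gives that almost surely some block spans $\Rp$. I would present this block version, with the positivity of $r$ established by Fubini and the induction just described.
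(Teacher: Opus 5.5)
Your proof is correct. Part~(i) is the paper's argument: condition on $\mX = \oX$, note that $\operatorname{col}(\oX)$ is a proper subspace of $\reals^k$ and hence Lebesgue-null, use that $\Y$ is conditionally absolutely continuous, and integrate over the design.

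One side remark is inaccurate. The set $\cbr{(\by, \oX) : \y \in \operatorname{col}(\oX)}$ is not closed: take $k = 2$, $p = 1$, $\y = (1,0)^\tr$ and $\oX = (t,0)^\tr$ with $t \to 0$. It is, however, Borel, for instance as $\cbr{\rank\rbr{[\oX, \y]} = \rank(\oX)}$ with the rank lower-semicontinuous. That is all the Fubini step needs.

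For part~(ii) you take a genuinely different route. The paper argues topologically through the support $S$ of $P_{\bX}$:
\begin{itemize}
\item \Cref{cond:identifiability} forces $S$ to span $\Rp$, so one picks linearly independent $\bx_1, \ldots, \bx_p \in S$.
\item By continuity of the determinant, there are pairwise disjoint balls $B_j$ around these points such that any choice of one point per ball is linearly independent.
\item Requiring every ball to be hit gives $1 - \pr[\rank(\mX) = p] \leq p(1-q)^k$, with $q = \min_j \pr[\bX \in B_j] > 0$.
\item Monotonicity of the events in $k$ then yields the almost sure statement.
\end{itemize}

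You instead use \Cref{cond:identifiability} directly, in the form $\pr[\bX \in V] < 1$ for every proper subspace $V$. You obtain $r > 0$ by conditioning one copy at a time, and then exploit independent blocks. This also gives a geometric rate, $\pr[\rank(\mX) < p] \leq (1-r)^{\lfloor k/p \rfloor}$. So Borel--Cantelli is not actually needed: this bound, combined with the monotonicity of the rank in $k$ (which you already note), gives the almost sure statement exactly as in the paper.

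Your route avoids supports and continuity of the determinant. The cost is a measurability step for $(\bx_1, \ldots, \bx_j) \mapsto \pr[\bX \in \operatorname{span}(\bx_1, \ldots, \bx_j)]$; the underlying set is again Borel but not closed. The paper's route needs no conditioning at all.
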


In the terms of \Cref{prop:existence_CMLE}, hypothesis~(b) thus holds with probability one for every $k \geq p+1$ and hypothesis~(a) with probability tending to one; equivalently, the $k \times (p+1)$ matrix $(\Y, \mX)$ has full rank with probability tending to one.

This settles the definition of the estimator, which we record here once and for all. Take $k = n$, the size of the whole sample. By part~(i) of \Cref{lem:design_regularity}, hypothesis~(b) of \Cref{prop:existence_CMLE} holds with probability one for every $n \geq p+1$, and part~(ii) of that proposition then already provides a maximizer of $M_n$ over $\Eta$, the set of them being $\cbr{\hat\alpha} \times \rbr{\hat\bbeta + \ker(\mX)}$. Hypothesis~(a) holds with probability tending to one, by part~(ii) of \Cref{lem:design_regularity}, and reduces that set to a single point. On the complementary event we take for $\mle$ the maximizer of minimal Euclidean norm.
With this convention $\mle$ is defined for every $n \geq p+1$, almost surely, and, by construction, it satisfies
\[
    M_n(\mle) \geq M_n(\bmeta) , \qquad \bmeta \in \Eta.
\]

\subsection{The Asymptotic Criterion Function}
    \label{subsec:asymptotic_criterion}

By \Cref{eq:m_upper_bound}, the positive part of $m(\bmeta \mid Y, \bX)$ is bounded by the constant $\rbr{\ln \alpha - 1}_+$, so that $m(\bmeta \mid Y, \bX)$ is quasi-integrable for every $\bmeta \in \Eta$.
Recall that a random variable $H$ is called \emph{quasi-integrable} if at least one of $\ex\sbr{H_+}$ and $\ex\sbr{H_-}$ is finite, where $H_+ = \max(H, 0)$ and $H_- = \max(-H, 0)$; in that case $\ex\sbr{H} := \ex\sbr{H_+} - \ex\sbr{H_-}$ is well defined in $[-\infty, +\infty]$.

By the law of large numbers for quasi-integrable summands, we have, almost surely,
\begin{equation}
    \label{eq:asymptotic_criterion_function}
    M_n(\bmeta) \xrightarrow{n \to \infty} M(\bmeta) := \ex_{\bmeta_\dagger}\sbr{m(\bmeta \mid Y, \bX)} \leq \ln \alpha - 1 ,
\end{equation}
the limit being well defined in $[-\infty, \ln \alpha - 1]$ for every $\bmeta \in \Eta$.
At the true parameter, that limit is finite and free of the covariates. By \Cref{eq:model}, we have $\ln Y = \bd^\tr \bX + \ad^{-1} G$ with $G$ standard Gumbel and independent of $\bX$, so that,
by \Cref{eq:objective_function},
\begin{equation}
    \label{eq:m_at_truth}
    m(\bmeta_\dagger \mid Y, \bX) = \ln \ad + g(-G) = \ln \ad - G - \exp(-G),
    \mtext{so that} M(\bmeta_\dagger) = \ln \ad - \gamma - 1 ,
\end{equation}
because $\ex\sbr{G} = \gamma$, the Euler--Mascheroni constant, and $\exp(-G) \sim \Exp(1)$. The law of $m(\bmeta_\dagger \mid Y, \bX)$ depends neither on the distribution of $\bX$ nor on $\bd$. Since $M(\bmeta_\dagger)$ is finite, it may be subtracted from $M(\bmeta)$,
leaving the expected log-likelihood ratio:
\begin{equation}
    \label{eq:M_difference}
    M(\bmeta) - M(\bmeta_\dagger)
    = \ex_{\bmeta_\dagger}\sbr{\ln \frac{f_{\bmeta}^{Y\mid \bX}}{f_{\bmeta_\dagger}^{Y\mid \bX}}} ,
    \qquad \bmeta \in \Eta .
\end{equation}
The set of points of maximum of the asymptotic criterion function is
\begin{equation}
    \label{eq:Eta_0}
    \Eta_0 := \cbr{\bmeta_0 \in \Eta : M(\bmeta_0) = \sup_{\bmeta \in \Eta} M(\bmeta)}.
\end{equation}

\begin{lemma}[Unique maximizer]
    \label{lem:unique_maximizer}
    Consider the model given by \Cref{eq:model}, and assume that \Cref{cond:identifiability} is satisfied. Then
    $\Eta_0 = \cbr{\bmeta_\dagger}$.
\end{lemma}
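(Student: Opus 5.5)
We show that $M(\bmeta) < M(\bmeta_\dagger)$ for every $\bmeta \neq \bmeta_\dagger$; together with finiteness of $M(\bmeta_\dagger)$ this gives $\Eta_0 = \cbr{\bmeta_\dagger}$. Since $M(\bmeta_\dagger) = \ln \ad - \gamma - 1$ is finite by \Cref{eq:m_at_truth}, \Cref{eq:M_difference} expresses $M(\bmeta) - M(\bmeta_\dagger)$ as an expected log-likelihood ratio. If $M(\bmeta) = -\infty$ there is nothing to prove, so assume $M(\bmeta) > -\infty$. Then $\ln(f_{\bmeta}/f_{\bmeta_\dagger})(Y\mid\bX)$ is integrable: its positive part is bounded by $(\ln\alpha - 1)_+ + m(\bmeta_\dagger \mid Y, \bX)_-$, which is integrable.

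The argument conditions on $\bX = \bx$ and applies Jensen's inequality with the strictly concave logarithm. For almost every $\bx$,
\[
\ex_{\bmeta_\dagger}\sbr{\ln \frac{f_{\bmeta}(Y\mid\bx)}{f_{\bmeta_\dagger}(Y\mid\bx)} \,\Big|\, \bX=\bx} \le \ln \int f_{\bmeta}(y\mid\bx)\,dy \le 0 .
\]
Equality in the first inequality holds if and only if the ratio is a.s.\ constant under $f_{\bmeta_\dagger}(\cdot\mid\bx)$. Both densities are strictly positive on $\Rpos$ and integrate to one, so equality forces the two conditional densities to coincide $\mu$-a.e. This is the Kullback--Leibler divergence $-\mathrm{KL}(P_{\bmeta_\dagger}(\cdot\mid\bx)\,\|\,P_{\bmeta}(\cdot\mid\bx))$.

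Integrating over $P_{\bX}$ gives $M(\bmeta) - M(\bmeta_\dagger) = -\ex\sbr{\mathrm{KL}_{\bX}} \le 0$. Since the conditional KL is nonnegative, equality holds if and only if $\mathrm{KL}_{\bX}=0$ almost surely. That is exactly statement~(i) of \Cref{prop:identifiability} with $\bmeta_a=\bmeta$ and $\bmeta_b=\bmeta_\dagger$. By that proposition, $\alpha=\ad$ and $(\bbeta-\bd)^\tr\bX=0$ almost surely, and \Cref{cond:identifiability} then forces $\bbeta=\bd$. Hence $M(\bmeta)<M(\bmeta_\dagger)$ for $\bmeta\neq\bmeta_\dagger$, so the supremum in \Cref{eq:Eta_0} equals $M(\bmeta_\dagger)$ and is attained only at $\bmeta_\dagger$.

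The main care point is the measure-theoretic bookkeeping. We must justify splitting the expectation by the tower property when only quasi-integrability is available, which is why the case $M(\bmeta)=-\infty$ is handled first. We must also ensure the conditional Jensen step is applied to integrable quantities for almost every $\bx$. If integrability conditionally on $\bx$ fails on a null set, that set is simply ignored. Where it fails on a non-null set, the conditional expectation equals $-\infty$ there, and the strict inequality only becomes stronger.
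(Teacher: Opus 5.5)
Your proof is correct and takes the same route as the paper. You condition on $\bX$ and apply Jensen's inequality to the strictly concave logarithm to get $M(\bmeta) \le M(\bmeta_\dagger)$; strictness then follows from the equality case, \Cref{prop:identifiability} and \Cref{cond:identifiability}. Your separate handling of $M(\bmeta) = -\infty$ and of the integrability of the log-likelihood ratio is somewhat more careful bookkeeping than the paper's, but the argument is the same.
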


\section{An Extension of Wald's Consistency Theorem}
\label{sec:wald}

    The goal of this section is to present, in an abstract setting, an extended version of Wald's consistency theorem \citep{Wald1949}, in the form given as Theorem~5.14 in \citet{van2000asymptotic}. This theorem will play an important role in proving consistency of the CMLE associated with \Cref{eq:model}.

    Let $Z_1, Z_2, \ldots$ be an iid sequence of random elements with values in a measurable space $\rbr{\cZ, \cA}$, with distribution $P_Z$, and write $\bZ^{(l)} := \rbr{Z_1, \ldots, Z_l}$, for $l \in \naturals$, for the block formed by the first $l$ of them. Let $(\Theta, d)$ be a metric space, let $\sTheta$ be a nonempty subset of it, and define the objective function $m : \sTheta \times \cZ \to \reals \cup \cbr{-\infty}$, with $z \mapsto \mm{\theta}{z} := m(\theta, z)$ measurable for every $\theta \in \sTheta$. Let the associated random criterion function be defined as $\theta \mapsto M_n(\theta) := n^{-1}\sum_{i=1}^n \mm{\theta}{Z_i}$.

For $\theta \in \Theta$ and $\rho > 0$, let $B(\theta, \rho)$ denote the open ball $\cbr{\theta' \in \Theta : d(\theta', \theta) < \rho}$.

    \begin{definition}[Upper-semicontinuity]
        \label{def:upper_semicontinuity}
        Let $W$ be a random element in a measurable space $\cW$, with distribution $P_W$, and let $\varphi : \Theta \times \cW \to \reals \cup \cbr{-\infty}$ be such that $w \mapsto \varphi_\theta(w) := \varphi(\theta, w)$ is measurable for every $\theta \in \Theta$. The random map $\theta \mapsto \varphi_\theta(W)$ is \emph{almost surely upper-semicontinuous at} $\theta \in \Theta$ if there exists a set $\cN_\theta \subseteq \cW$ with $P_W\sbr{\cN_\theta} = 0$ such that
        \[
            \limsup_{j \to \infty} \varphi_{\theta_j}(w) \leq \varphi_\theta(w)
        \]
        for every $w \in \cW \setminus \cN_\theta$ and every sequence $\theta_j \to \theta$ in $\Theta$. It is \emph{almost surely upper-semicontinuous} if it is so at every $\theta \in \Theta$.
    \end{definition}

    \begin{condition}[Wald's integrability condition]
        \label{cond:wald_integrability}
        In the setting of \Cref{def:upper_semicontinuity}, the random map $\theta \mapsto \varphi_\theta(W)$ is said to \emph{satisfy Wald's integrability condition at} $\theta \in \Theta$ if there exists $\delta > 0$ such that
        \begin{enumerate}[label=(\alph*)]
            \item $\sup_{\theta' \in B(\theta, \rho)} \varphi_{\theta'}(W)$ is a $[-\infty,\infty]$-valued random variable for every radius $\rho \in (0, \delta]$, and
            \item its positive part is integrable at the largest of those radii,
            \begin{equation*}
                \ex\sbr{\rbr{\sup_{\theta' \in B(\theta, \delta)} \varphi_{\theta'}(W)}_+} < \infty.
            \end{equation*}
        \end{enumerate}
        It is said to \emph{satisfy Wald's integrability condition} if it does so at every $\theta \in \Theta$.
    \end{condition}
    \noindent Item~(b) at the radius $\delta$ implies the same bound at every smaller radius, a supremum over a smaller ball being smaller; item~(a), on the other hand, is a genuine requirement at each radius, a supremum over an uncountable index set not being measurable by itself.

    \begin{theorem}
    \label{thm:wald_blocks}
        In the setting described above, fix $k \in \naturals$ and let, for every $l \in \cbr{k, \ldots, 2k-1}$,
        \[
            \om : \Theta \times \cZ^l \to \reals \cup \cbr{-\infty}
        \]
        be such that $\bz^{(l)} \mapsto \om\rbr{\theta \mid \bz^{(l)}}$ is measurable for every $\theta \in \Theta$ and such that
        \begin{enumerate}[label=(\roman*)]
            \item for every $\theta \in \sTheta$ and every $\bz^{(l)} = \rbr{z_1, \ldots, z_l} \in \cZ^l$,
            \begin{equation}
                \label{eq:restriction_property}
                \om\rbr{\theta \mid \bz^{(l)}} = \frac1l \sum_{i=1}^l \mm{\theta}{z_i} ;
            \end{equation}
            \item the random map $\theta \mapsto \om\rbr{\theta \mid \bZ^{(l)}}$ on $\Theta$ is almost surely upper-semicontinuous (\Cref{def:upper_semicontinuity}) and satisfies Wald's integrability condition (\Cref{cond:wald_integrability}), both with $\cW = \cZ^l$ and $W = \bZ^{(l)}$;
            \item $\ex[\om[k](\theta \mid \bZ^{(k)})] = -\infty$ for every $\theta \in \Theta \setminus \sTheta$.
        \end{enumerate}
        Let
        \begin{equation}
            \label{eq:Theta_0_blocks}
            \mTheta := \cbr{\theta_0 \in \sTheta : \ex\sbr{\mm{\theta_0}{Z}} = \sup_{\theta \in \sTheta} \ex\sbr{\m Z}}
        \end{equation}
        denote the set of points of maximum. Assume that this set is nonempty. Then for any $\sTheta$-valued estimator sequence $\est_n$ such that $M_n(\est_n) \geq M_n(\theta_0) - o_{\pr}(1)$ for some $\theta_0 \in \mTheta$, we have, for every $\eps > 0$ and every compact set $K \subseteq \Theta$,
        \begin{equation*}
            \pr\sbr{d\rbr{\est_n, \mTheta} \geq \eps \; \text{ and } \; \est_n \in K} \to 0, \mtext{as} n \to \infty,
        \end{equation*}
        where $d\rbr{\est_n, \mTheta} := \inf_{\theta \in \mTheta} d\rbr{\est_n, \theta}$.
    \end{theorem}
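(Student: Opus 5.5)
We follow the structure of the proof of Theorem~5.14 in \citet{van2000asymptotic}, with single observations replaced by blocks. Write $M(\theta) := \ex[\m Z]$ for $\theta \in \sTheta$, and put $M(\theta) := -\infty$ for $\theta \in \Theta \setminus \sTheta$. By \Cref{eq:restriction_property}, $\ex[\om[k](\theta \mid \bZ^{(k)})] = M(\theta)$ for $\theta \in \sTheta$, and the same identity holds at boundary points by~(iii). The plan has three steps.

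\emph{Step 1: local control of the mean.} Fix $\theta \in \Theta$ and set $U_\rho := \sup_{\theta' \in B(\theta,\rho)} \om[k](\theta' \mid \bZ^{(k)})$. This is a random variable for $\rho \le \delta$ by (ii)(a). As $\rho \downarrow 0$ (along $\rho = 1/j$), $U_\rho$ decreases. Its limit is at most $\om[k](\theta \mid \bZ^{(k)})$ almost surely, by upper-semicontinuity: choose near-maximizers $\theta_j \to \theta$. The positive parts are dominated by the integrable $(U_\delta)_+$, so monotone convergence applied to $U_\delta - U_\rho$ gives
\[
\ex[U_\rho] \downarrow \le \ex[\om[k](\theta\mid\bZ^{(k)})] = M(\theta) < \sup_{\sTheta} M =: M^*
\]
whenever $\theta \notin \mTheta$. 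This includes $\theta \in \Theta\setminus\sTheta$ by~(iii), where the limit is $-\infty$. So for each $\theta$ with $d(\theta,\mTheta)\ge\eps$ there is a radius $\rho_\theta$ with $\ex[U_{\rho_\theta}] < M^*$.

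\emph{Step 2: covering and splitting into blocks.} The set $K_\eps := \{\theta \in K : d(\theta,\mTheta)\ge\eps\}$ is closed in $K$, hence compact. Cover it by finitely many balls $B_1,\dots,B_N$ with centres $\theta_j$ and radii $\rho_{\theta_j}$. The key new device is to write $n = qk + r$ with $r \in \{0,\dots,k-1\}$. We cut $\{1,\dots,n\}$ into $q-1$ consecutive blocks of length $k$ and a final block of length $k+r \in \{k,\dots,2k-1\}$. For $\theta' \in \sTheta$, \Cref{eq:restriction_property} gives
\[
M_n(\theta') = \frac{k}{n}\sum_{b=1}^{q-1} \om[k](\theta' \mid \text{block } b) + \frac{k+r}{n}\,\om[k+r](\theta' \mid \text{last block}).
\]
Taking suprema over $B_j \cap \sTheta$ bounds $\sup_{B_j\cap\sTheta} M_n$ by the same expression with each term replaced by its supremum over $B_j$. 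The first part is $(qk/n)$ times an average of $q-1$ iid copies of $U^{(j)}_{\rho}$. By the strong law for quasi-integrable variables, which applies since the positive part is integrable, it converges almost surely to at most $\ex[U^{(j)}] < M^*$; here we may take the limit to be $-\infty$ if needed, truncating at a large negative level. For the last term we use (ii)(b) for the block length $l=k+r$: its positive part is integrable, so divided by $n$ it is $o_{\pr}(1)$ from above. Finitely many $l$ occur, and a uniform Markov bound handles them. Hence $\limsup \sup_{K_\eps\cap\sTheta} M_n \le \max_j \ex[U^{(j)}] + o_{\pr}(1) < M^* - \eta$ for some $\eta>0$, with probability tending to one.

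\emph{Step 3: conclusion.} By the ordinary LLN, $M_n(\theta_0) \to M(\theta_0) = M^*$, which is finite because $\theta_0 \in \mTheta$ and $M$ is bounded above via (ii)(b). On the event $\{d(\est_n,\mTheta)\ge\eps,\ \est_n \in K\}$ we then have $M^* - o_{\pr}(1) \le M_n(\est_n) \le M^*-\eta$, so its probability tends to zero.

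The main obstacle is Step 2. The blocks tie the $q$ terms together through $\theta'$, so we must check that the remainder block is negligible only from above. Note also that $\ex[U^{(j)}]$ may be $-\infty$, which is handled by truncation. The measurability of the suprema over the $B_j$ comes exactly from (ii)(a).
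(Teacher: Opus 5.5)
Your argument is correct in outline and shares the paper's architecture: the local-suprema step (\Cref{lem:local_suprema}), a finite cover of the compact set $\cbr{\theta\in K: d(\theta,\mTheta)\ge\eps}$ by balls on which the expected block supremum is below the maximum, blocks of size $k$ plus one block of size in $\cbr{k,\ldots,2k-1}$, and the strong law for quasi-integrable variables. Where you genuinely differ is the handling of the remainder block.

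\emph{The paper's route.} It puts the remainder \emph{first}, as $(Z_1,\ldots,Z_r)$, and argues separately along each of the $k$ subsequences $n=k\ell+r$, $r\in\cbr{k,\ldots,2k-1}$. Along each one, the remainder is a single fixed random vector and the $k$-blocks are a fixed iid sequence. The upper bound therefore converges almost surely, and the remainder is negligible simply because its supremum is almost surely finite. The paper also uses a second finite cover, built from the Wald radii of block size $r$.

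\emph{Your route.} You put the remainder \emph{last}, so the $k$-blocks $(Z_{(b-1)k+1},\ldots,Z_{bk})$ form one iid sequence for every $n$, and no subsequence bookkeeping is needed. The price is that the last block moves with $n$, so you control it only in probability, via a Markov bound uniform over the finitely many block lengths. That suffices, because the near-maximizer hypothesis is itself only in probability; Borel--Cantelli would even make the remainder almost surely negligible. Your version is slightly more streamlined; the paper's gives a pathwise bound along each subsequence.

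Three points need repair.

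\emph{Radii.} Your radii $\rho_{\theta_j}$ are tuned to block size $k$ only. Measurability of $\sup_{B_j}\om[k+r](\cdot\mid\text{last block})$, and integrability of its positive part, come from items~(a) and~(b) of Wald's condition at block size $k+r$. Those hold only up to the radius $\delta^{(k+r)}_{\theta_j}$ for that block length, so as written your appeal to (ii)(b) can fail. Since $\rho\mapsto\ex[U_\rho]$ is nondecreasing, the fix is to take $\rho_\theta\le\min_{l\in\cbr{k,\ldots,2k-1}}\delta^{(l)}_\theta$, or to use a separate cover as the paper does.

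\emph{The case $M^*=-\infty$.} ``Bounded above via (ii)(b)'' only gives $M^*<\infty$, not finiteness, and $M^*=-\infty$ is possible. In that case your strict inequality $M(\theta)<M^*$ fails at points of $\Theta\setminus\sTheta$, so the case must be split off before Step~1. It is trivial: then $\mTheta=\sTheta$, and $\est_n$ is $\sTheta$-valued.

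\emph{Single-observation integrability.} The identity $\ex[\om[k](\theta\mid\bZ^{(k)})]=\ex[\m Z]$, and the integrability of $\mm{\theta_0}{Z}$ needed for the law of large numbers at $\theta_0$, both rest on $\ex[(\m Z)_+]<\infty$. That is not literally what (ii)(b) gives, since (ii)(b) controls the block average. The paper supplies it with \Cref{lem:summand_integrability}, which uses independence to pass from the positive part of an iid sum to that of a single summand.
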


    Two remarks:
   \begin{itemize}
       \item The expectations in \Cref{eq:Theta_0_blocks} are well defined in $[-\infty, \infty)$. On $\sTheta$, assumption~(i) turns the Wald integrability of assumption~(ii) into a statement about a block average, so that \Cref{lem:summand_integrability} in the appendix applies and yields $\ex[\rbr{\m Z}_+] < \infty$ for every $\theta \in \sTheta$ (\Cref{eq:single_observation_integrability}). The expectation $\ex\sbr{\m Z}$ is therefore well defined in $[-\infty, \infty)$, which implies $\ex[\om(\theta \mid \bZ^{(l)})] = \ex\sbr{\m Z}$ for all block sizes $l$.

        \item Assumption~(iii) is what allows the set of points of maximum to be written in terms of a single observation. Provided the supremum in \Cref{eq:Theta_0_blocks} is larger than $-\infty$, that assumption makes $\mTheta$ at the same time the set of maximizers of $\theta \mapsto \ex[\om[k](\theta \mid \bZ^{(k)})]$ over the whole of $\Theta$, which is the form in which the proof uses it: the points outside $\sTheta$ are excluded by assumption~(iii) itself.
   \end{itemize}

    Note that \Cref{thm:wald_blocks} reduces to the original version of Wald's theorem when $\sTheta=\Theta$ and $k=1$. Thus, our theorem extends Wald's consistency theorem to objective functions that may fail to be almost surely upper-semicontinuous or to satisfy Wald's integrability condition, as long as these conditions hold for a suitably extended block average.

\section{Consistency}
    \label{sec:consistency}
    
In the setting of \Cref{sec:framework}, we aim to prove consistency of the CMLE. Although several results establish consistency of the CMLE under fairly general assumptions (see Chapter 5 of \citealp{van2000asymptotic} and Chapters 12--13 of \citealp{Wooldridge2010}), these results are based on the assumption that the parameter space is compact. \citet{BucherSegersStaud2025} obtain consistency of M-estimators for a class of models more general than ours, in the sense that $Y \mid \bX$ is only assumed to be in the domain of attraction of the GEV distribution rather than exactly Fréchet; they too work on a compact parameter space, and they require in addition that the covariates take values in a compact set. However, we find it unsatisfactory to impose compactness by simply restricting the parameter space to a compact subset of $\Eta$. This is often undesirable as an assumption because it is not verifiable from the observed data and may impose arbitrary bounds on parameters that are naturally unrestricted. One must then either apply Wald's argument on a suitable compactification of $\Eta$, or show that there exists a compact set $K\subset\Eta$ containing the CMLE with probability tending to one.

We follow the first approach and work with a compactification of the whole parameter space. That is, we embed $\Eta$ into a compact metrizable space $\overline\Eta$ by adjoining boundary points corresponding to sequences escaping every compact subset of $\Eta$. We then extend the objective function ($m$) to $\overline\Eta$ and verify the conditions of \Cref{thm:wald_blocks}. In particular, this requires finding a compactification for which the extended objective function is upper-semicontinuous in the sense of \Cref{def:upper_semicontinuity} and satisfies \Cref{cond:wald_integrability} for some $k$.

We now elaborate on a specific compactification of $\Eta$ under which the conditions of \Cref{thm:wald_blocks} are satisfied for large enough $k$.

\subsection{Compactification}
\label{subsec:compactification}

In the course of the project, we considered several compactifications of the parameter space and found that, ultimately, none of them worked with the original version of Wald's consistency theorem. The compactification below proved valid under the extension in \Cref{thm:wald_blocks}, although this does not rule out the possibility that some other compactification might satisfy the conditions of the original theorem.

Recall that $\bmeta = \rbr{\alpha, \bbeta} \in \Rpos \times \Rp = \Eta$. The compactification is constructed as follows:
\begin{itemize}
    \item The coordinate $\alpha \in \Rpos$ is extended to the compact interval $[0,\infty]$ by adjoining the endpoints $0$ and $\infty$;
    \item For $\bbeta \in \Rp$, we pass to the \emph{cosmic closure} of $\Rp$, denoted by $\csm(\Rp)$ \citep[Section~3.A]{rockafellar2009variational}. In this construction, which we explain below, each direction in $\Rp$ is identified with a point at infinity, and the collection of all such points forms the horizon of the space.
\end{itemize}
Let $\dir \bbeta := \cbr{\lambda \bbeta : \lambda \in \Rpos}$ denote the \emph{direction} of $\bbeta \in \Rp \setminus \cbr{\0}$. The direction of the origin is left undefined. For $\bbeta_a,\bbeta_b \in \Rp \setminus \cbr{\0}$, we have
\[
\dir \bbeta_a = \dir \bbeta_b
\quad\Longleftrightarrow\quad
\bbeta_a=\lambda \bbeta_b,
\ \text{for some }\lambda>0.
\]
The \emph{horizon} of $\Rp$, denoted by
\[
    \hzn(\Rp) := \cbr{ \dir \bbeta : \bbeta \in \Rp \setminus \cbr{\0}},
\]
is the set of all such direction points. The \emph{cosmic closure} of $\Rp$ is defined as
\[
\csm(\Rp):=\Rp\cup \hzn(\Rp).
\]

A sequence $\bbeta_k\in\Rp$ is said to converge to a horizon point $\dir(\bbeta)\in\hzn(\Rp)$ if there exists a sequence of scalars $\lambda_k\downarrow 0$ such that
\[
\lambda_k \bbeta_k \to \bbeta
\qquad\text{as }k\to\infty.
\]
Likewise, for horizon points, $\dir \bbeta_k \to \dir\bbeta$ if there exist scalars $\lambda_k>0$ such that
\[
\lambda_k \bbeta_k \to \bbeta
\qquad\text{as }k\to\infty.
\]

\begin{figure}
    \centering
    \begin{tikzpicture}[scale=1.1]

    \begin{pgfonlayer}{background}
        \fill[gray!10]
            (-1.5,-2.5) -- (6,-2.5) -- (4,0.5) -- (-3,0.5) -- cycle;

        \draw
            (-1.5,-2.5) -- (6,-2.5) -- (4,0.5) -- (-3,0.5) -- cycle;
    \end{pgfonlayer}

    \node at (-1.25,-2.25) {$\Rp$};

    \fill[white]
    (-2,1)
    .. controls (-2,-1.1046) and (-0.1046,-1) ..
    (0,-1.0)
    .. controls (0.1046,-1) and (2,-1.1046) ..
    (2,1)
    -- cycle;

    \draw
    (-2,1)
    .. controls (-2,-1.1046) and (-0.1046,-1) ..
    (0,-1.0)
    .. controls (0.1046,-1) and (2,-1.1046) ..
    (2,1);

    \fill (0,-0.5) circle (1pt);
    \node[left] at (0,-0.5) {$(0,0)$};

    \fill (0,1) circle (1pt);
    \node[left] at (0,1) {$(0,1)$};

    \draw (-2,1) arc (180:360:2 and 0.7);
    \draw (-2,1) arc (180:0:2 and 0.7);

    \draw[dotted] (0,-0.5) -- (0,1);
    \draw[->] (0,1) -- (0,3) node[left] {$\reals$};

    \filldraw[draw=blue, pattern=north west lines, pattern color=blue, opacity=0.7]
    plot[smooth cycle,tension=0.9] coordinates {
      (0.5,-0.75)
      (0.435,-0.25)
      (0.65,0.05)
      (1.1,0)
      (1.4,0.47)
      (1.7,0.375)
      (1.4,-0.25)
      (0.95, -0.65)
    };

    \begin{scope}
        \clip
        (-1.5,-2.5) -- (6,-2.5) -- (4,0.5) -- (-3,0.5) -- cycle;

        \filldraw[draw=blue, pattern=north east lines, pattern color=blue, opacity=0.4]
        plot[smooth cycle,tension=0.9] coordinates {
          (1.22,-1.56)
          (1.25,-2.20)
          (2.84,-2.31)
          (3.85,-1.92)
          (5.85,-2.42)
          (6.62,-2.37)
          (5.07,-1.62)
        };
    \end{scope}

    \node[right] at (4.5,-1.35) {$C$};

    \coordinate (A) at (0,1);

    \coordinate (B1) at (0.7,-1.80);
    \coordinate (M1) at ($(A)!0.575!(B1)$);
    \draw[dotted] (A) -- (M1);
    \draw[dashed] (M1) -- (B1);
    \fill[orange] (M1) circle (1pt);
    \fill[orange] (B1) circle (1pt);
    \coordinate (B2) at (2.34,-2.31);
    \coordinate (M2) at ($(A)!0.29!(B2)$);
    \draw[dotted] (A) -- (M2);
    \draw[dashed] (M2) -- (B2);
    \fill[orange] (M2) circle (1pt);
    \fill[orange] (B2) circle (1pt);
    \coordinate (B3) at (3.55,-1.92);
    \coordinate (M3) at ($(A)!0.325!(B3)$);
    \draw[dotted] (A) -- (M3);
    \draw[dashed] (M3) -- (B3);
    \fill[orange] (M3) circle (1pt);
    \fill[orange] (B3) circle (1pt);
    \coordinate (B4) at (5.5,-0.92);
    \coordinate (M4) at ($(A)!0.265!(B4)$);
    \draw[dotted] (A) -- (M4);
    \draw[dashed] (M4) -- (B4);
    \fill[orange] (M4) circle (1pt);
    \coordinate (B5) at (4.2,-1.525);
    \coordinate (M5) at ($(A)!0.377!(B5)$);
    \draw[dotted] (A) -- (M5);
    \draw[dashed] (M5) -- (B5);
    \fill[orange] (M5) circle (1pt);
    \fill[orange] (B5) circle (1pt);

    \node[below] at (3.8,-1.9) {$(\bbeta,0)$};
\end{tikzpicture}
    \caption{Hemispherical model associated with the compactification of $\Rp$. Based on Figure 3--2 in \citet{rockafellar2009variational}.}
    \label{fig:hemispherical_model}
\end{figure}

The hemispherical model provides a useful visualization (see \Cref{fig:hemispherical_model}). Consider the unit hemisphere
\[
H_p := \cbr{(\bx, h) \in \Rp \times \reals : h \leq 1,\ \norm{\bx}^2 + (h-1)^2 = 1},
\]
together with the hyperplane $\Rp \times \cbr{0}$ tangent to the hemisphere at $(\0,0)$. Identify the hyperplane with $\Rp$ itself. There is a one-to-one correspondence between the points of $\Rp$ and the points of the open hemisphere: a point $\bbeta \in \Rp$ corresponds to the unique point at which the ray emanating from the center $(\0, 1)$ of the hemisphere and passing through $(\bbeta, 0)$ intersects the open hemisphere. The infinity points are represented by the rim of $H_p$. Since $H_p$ is compact, this yields a compact representation of $\csm(\Rp)$ (see Theorem~3.2 in \citealp{rockafellar2009variational}). In other words, we have a homeomorphism that maps the points on the hyperplane to the interior of the hemisphere, and the directions representing points on the horizon to the points on the rim of the hemisphere.

Consequently, the extended parameter space is
\[
\overline\Eta :=[0,\infty]\times \csm(\Rp).
\]
Endow $[0,\infty]$ and $\csm(\Rp)$ with metrics $d_{\mathrm A}$ and $d_{\mathrm B}$ that induce their respective topologies, and define the metric
\[
d_\infty(\bmeta_1,\bmeta_2)
:=
\max\bigl\{d_{\mathrm A}(\alpha,\alpha'),\, d_{\mathrm B}(\bbeta,\bbeta')\bigr\},
\]
for $\bmeta_1=(\alpha,\bbeta)$ and $\bmeta_2=(\alpha',\bbeta')$ in $\overline\Eta$. Then $(\overline\Eta,d_\infty)$ is a compact metric space, as a product of compact metric spaces.

The parameter space $\Eta$ is an open subset of $\overline\Eta$ on which $d_\infty$ induces the Euclidean topology.
For a fixed $\bmeta_0 \in \Eta$ and for $\Eta$-valued random vectors $\bmeta_n$,
\begin{equation}
    \label{eq:metrics_agree}
    d_\infty\rbr{\bmeta_n, \bmeta_0} \pto 0
    \qquad \Longleftrightarrow \qquad
    \norm{\bmeta_n - \bmeta_0} \pto 0 .
\end{equation}
Within the original parameter set $\Eta$, convergence in probability on the compactification on $\overline \Eta$ and on $\Eta$ are thus equivalent.

    \subsection{Consistency of the CMLE}
    To show consistency of the CMLE under the model in \Cref{eq:model}, it suffices to ensure that $\bmeta_\dagger$ is identifiable and a unique point of maximum for $M(\bmeta) = \ex_{\bmeta_\dagger}\sbr{m\rbr{\bmeta \mid Y, \bX}}$ of \Cref{eq:asymptotic_criterion_function}, and to verify the conditions of \Cref{thm:wald_blocks} for the CMLE on the compactified parameter space $\overline \Eta$ defined in \Cref{subsec:compactification}, using a suitable extension of the objective function $m$ over blocks of observations. The first half is already done: the set $\Eta_0$ of \Cref{eq:Eta_0} is \Cref{eq:Theta_0_blocks} instantiated at this $m$, and \Cref{lem:unique_maximizer} identifies it as the singleton $\cbr{\bmeta_\dagger}$. What remains is the extension, and the near-maximizer hypothesis on the estimator, which is a hypothesis of \Cref{thm:consistency} below.

    Define $\tm$ as
    \begin{equation}
        \label{eq:l_objective_function}
        \tm(\bmeta \mid \by,  \oX) := \frac1l\sum_{i=1}^l m(\bmeta \mid y_i, \bx_i),
    \end{equation}
    for $l$ iid observations. We are interested in a measurable extension of $\tm$, which we will denote $\om$, to the compactified parameter space such that the following conditions hold for all $l \in \cbr{k, \ldots, 2k -1}$ for some $k \in \naturals$:
    \begin{enumerate}
        \item $\om$ is almost surely upper-semicontinuous (\Cref{def:upper_semicontinuity});
        \item $\om$ satisfies Wald's integrability condition (\Cref{cond:wald_integrability});
        \item The expectation of $\om[k]$ on points $\bmeta$ not in $\Eta$ is $-\infty$.
    \end{enumerate}

    \begin{lemma}[Upper-semicontinuity]
        \label{lem:USC}
        Fix $l \in \naturals$ and let $(\by,\oX) \in \Rpos^l \times \reals^{l \times p}$ be a realization of $l$ observations, with $\by = (y_1,\ldots,y_l)^\tr$ and $\oX = (\bx_1, \ldots, \bx_l)^\tr$, $\bx_i \in \Rp$, such that
        \[
            \y \notin \operatorname{col}(\oX) ,
            \mtext{where} \y := (\ln y_1, \ldots, \ln y_l)^\tr .
        \]

        For every horizon point $\bgamma=\dir\bbeta\in \hzn(\Rp)$, define
        \[
        \cM_\bgamma:=\cbr{\bx \in \Rp : \bbeta^\tr \bx = 0}.
        \]
        Then the map $\om(\cdot\mid \by,\oX):\overline\Eta\to \reals\cup\{-\infty\}$ defined as
        \[
        \om(\bmeta\mid \by,\oX) :=
        \begin{dcases}
            \tm(\bmeta\mid \by,\oX)
            & \text{if } \bmeta=(\alpha,\bbeta)\in \Rpos \times \Rp,\\
            \sup_{\bb\in\Rp}\tm((\alpha,\bb)\mid \by, \oX)
            & \begin{aligned}[t]
                & \text{if } \bmeta=(\alpha,\bgamma) \in \Rpos \times \hzn(\Rp) \\
                & \qquad \text{and } \cbr{\bx_1, \ldots, \bx_l} \subseteq \cM_\bgamma,
            \end{aligned} \\
            -\infty
            & \text{otherwise,}
        \end{dcases}
        \]
        is upper-semicontinuous on $\overline\Eta$.
    \end{lemma}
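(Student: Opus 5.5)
The plan is to work directly with sequences. Since $\overline\Eta$ is metrizable, it suffices to show that for every $\bmeta\in\overline\Eta$ and every sequence $\bmeta_j\to\bmeta$ in $\overline\Eta$ we have $\limsup_j \om(\bmeta_j\mid\by,\oX)\le\om(\bmeta\mid\by,\oX)$. Passing to subsequences, we may assume that all $\bmeta_j$ lie in $\Eta$, or all lie in $\Rpos\times\hzn(\Rp)$, or all lie in the set where $\om=-\infty$; the last case is trivial. We then split according to the type of the limit point $\bmeta$.

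The central tool is a uniform bound coming from the hypothesis $\y\notin\operatorname{col}(\oX)$. Put $c:=\min_{\bb\in\Rp}\sum_{i=1}^l\abr{\bb^\tr\bx_i-\ln y_i}$. This minimum is attained, because it is an $\ell^1$-distance from $\y$ to a closed subspace, and $c>0$ by the hypothesis. The second bound in \Cref{eq:g_bounds} gives, for all $\alpha>0$ and $\bb\in\Rp$,
\[
\tm((\alpha,\bb)\mid\by,\oX)\le \ln\alpha-\frac{\alpha}{l}\sum_{i=1}^l\abr{\bb^\tr\bx_i-\ln y_i}\le \ln\alpha-\frac{\alpha c}{l},
\]
while the first bound gives $\tm\le\ln\alpha-1$. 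Hence $\phi(\alpha):=\sup_{\bb}\tm((\alpha,\bb)\mid\by,\oX)\le\min\{\ln\alpha-1,\ \ln\alpha-\alpha c/l\}$. This tends to $-\infty$ as $\alpha\to0$ and as $\alpha\to\infty$, uniformly in $\bb$.

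\emph{Limit points with $\alpha\in\{0,\infty\}$.} Here $\om(\bmeta)=-\infty$. Along any sequence with $\alpha_j\to0$ or $\alpha_j\to\infty$, every value $\om(\bmeta_j)$ is at most $\phi(\alpha_j)$ or equal to $-\infty$, so the displayed bound gives the claim.

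\emph{Limit points in $\Eta$.} $\Eta$ is open in $\overline\Eta$, so eventually $\bmeta_j\in\Eta$. Moreover $\tm$ is continuous there, since $g$ is continuous.

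\emph{Horizon limit points.} Let $\bmeta=(\alpha,\bgamma)$ with $\alpha\in\Rpos$ and $\bgamma=\dir\bbeta$. Two observations drive this case.
\begin{enumerate}
\item $\phi$ is finite and concave on $\Rpos$, hence continuous. It is finite because it is bounded above by $\ln\alpha-1$ and is at least any single value of $\tm$. It is concave because $\phi(\alpha)=\sup_{\bb}\psi(\alpha,\bb\mid\by,\oX)$, using the reparametrization $\bb\mapsto\alpha\bb$ of \Cref{eq:psi}, and a partial supremum of the jointly concave function of \Cref{prop:existence_CMLE}(i) is concave.
\item The condition $\{\bx_1,\dots,\bx_l\}\subseteq\cM_{\bgamma}$ is closed under horizon convergence. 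If $\lambda_j\bbeta_j\to\bbeta$ and $\bbeta_j^\tr\bx_i=0$ for all $j$, then $\bbeta^\tr\bx_i=0$.
\end{enumerate}

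Now consider first a sequence $\bmeta_j=(\alpha_j,\bbeta_j)\in\Eta$ with $\lambda_j\bbeta_j\to\bbeta$ and $\lambda_j\downarrow0$. If some $\bx_i$ has $\bbeta^\tr\bx_i\neq0$, then $\abr{\bbeta_j^\tr\bx_i}\to\infty$. The corresponding term satisfies $g(\alpha_j(\bbeta_j^\tr\bx_i-\ln y_i))\le-\alpha_j\abr{\cdots}\to-\infty$, with $\alpha_j\to\alpha>0$, and the remaining terms are at most $-1$. So the limit is $-\infty=\om(\bmeta)$. Otherwise all $\bx_i$ lie in $\cM_{\bgamma}$, and $\limsup_j\tm(\bmeta_j)\le\limsup_j\phi(\alpha_j)=\phi(\alpha)=\om(\bmeta)$ by continuity of $\phi$.

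Next consider a sequence of horizon points $(\alpha_j,\bgamma_j)$. Only the $j$ with data in $\cM_{\bgamma_j}$ have finite values, namely $\phi(\alpha_j)$. If there are infinitely many such $j$, the closedness in item~2 puts the data in $\cM_{\bgamma}$, and continuity of $\phi$ again gives the bound.

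I expect this horizon case to be the main obstacle, specifically the continuity of $\phi$ in $\alpha$. The plan is to obtain it through concavity of $\psi$ rather than through any direct uniform estimate.
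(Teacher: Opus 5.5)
Your proposal is correct and follows essentially the same route as the paper. The common ingredients are: the profile $S$ (your $\phi$) shown finite, concave and hence continuous via the $(\alpha,\alpha\bbeta)$ reparametrization; the two $g$-bounds forcing $-\infty$ as $\alpha\to0$ or $\alpha\to\infty$, the latter using $\operatorname{d}(\oX,\by)>0$; the diverging residual when some $\bx_i\notin\cM_\bgamma$; and closedness of the set of horizon points annihilating the data, for sequences of horizon points. The only difference is organizational: you split by the type of the limit after a subsequence reduction, whereas the paper first separates sequences in $\Eta$ from sequences on the boundary.
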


    In \Cref{lem:integrable_boundary,lem:integrability}, $(\bY, \mX)$ denotes the random counterpart of the realization $(\by, \oX)$ of \Cref{lem:USC}: for a block $(Y_1, \bX_1), \ldots, (Y_l, \bX_l)$ of $l$ observations,
    \begin{equation}
        \label{eq:block_notation}
        \bY := \rbr{Y_1, \ldots, Y_l}^\tr \in \Rpos^l
        \mtext{and}
        \mX := \rbr{\bX_1, \ldots, \bX_l}^\tr \in \reals^{l \times p} .
    \end{equation}
    We write
    \begin{equation}
        \label{eq:null_set}
        \cN_\infty := \cbr{(\by, \oX) \in \Rpos^l \times \reals^{l \times p} : \y \in \operatorname{col}(\oX)}
    \end{equation}
    for the set of realizations excluded by \Cref{lem:USC}. By part~(i) of \Cref{lem:design_regularity}, applied to a block of $l \geq p+1$ observations,
    \begin{equation}
        \label{eq:null_set_is_null}
        \pr_{\bmeta_\dagger}\sbr{(\bY, \mX) \in \cN_\infty} = 0 ,
    \end{equation}
    whatever the distribution of $\bX$.

    \begin{lemma}
        \label{lem:integrable_boundary}
        Let $\{(Y_i, \bX_i)\}_{i = 1}^l$ be iid and such that \Cref{eq:model} holds, with fixed $l \geq p+1$. Assume that \Cref{cond:identifiability} is satisfied, and define $\om$ as in \Cref{lem:USC}. Then, for all $\bmeta \in \overline \Eta \setminus \Eta$, we have
        \[
        \ex_{\bmeta_\dagger}\sbr{\om(\bmeta \mid \bY,\mX)} = -\infty.
        \]
    \end{lemma}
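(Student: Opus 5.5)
We split the boundary $\overline\Eta \setminus \Eta$ according to which coordinate leaves $\Eta$. The boundary consists of the points $(\alpha,\bbeta)$ with $\alpha \in \{0,\infty\}$ and $\bbeta \in \csm(\Rp)$ arbitrary, together with the points $(\alpha,\bgamma)$ with $\alpha \in \Rpos$ and $\bgamma \in \hzn(\Rp)$. By the definition of $\om$ in \Cref{lem:USC}, at every point of the first kind, and at every point of the second kind for which $\cbr{\bX_1,\ldots,\bX_l} \not\subseteq \cM_\bgamma$, the value of $\om$ is identically $-\infty$. The only nontrivial points are therefore $(\alpha,\bgamma)$ with $\alpha \in \Rpos$, $\bgamma = \dir\bbeta$, on the event $E_\bgamma := \cbr{\bbeta^\tr \bX_i = 0 \text{ for all } i \le l}$. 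There the extension equals $\sup_{\bb} \tm((\alpha,\bb)\mid \bY,\mX)$.

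First, $E_\bgamma$ has positive probability, since otherwise the expectation is already $-\infty$. By \Cref{cond:identifiability}, $q := \pr[\bbeta^\tr \bX = 0] < 1$, and by independence $\pr[E_\bgamma] = q^l$. Hence $\pr[E_\bgamma^c] = 1-q^l > 0$, and on $E_\bgamma^c$ we have $\om = -\infty$. The expectation of $\om$ is well defined, because on $E_\bgamma$ the value is bounded above by $\ln\alpha - 1$ via \Cref{eq:m_upper_bound}, and $\om \le \ln\alpha - 1$ everywhere. The positive part is thus integrable, and $\om$ equals $-\infty$ on a set of positive probability. A quasi-integrable random variable that takes the value $-\infty$ with positive probability has expectation $-\infty$, since its negative part has infinite expectation. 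The same argument gives $-\infty$ in all the trivial cases. For $\alpha \in \{0,\infty\}$, the extension is $-\infty$ with probability one, so the conclusion is immediate.

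The only step I expect to require care is making sure the conventions of \Cref{lem:USC} assign $-\infty$ to every boundary point with $\alpha \in \{0,\infty\}$, including those with horizon $\bbeta$. The definition gives a finite value only when $\alpha \in \Rpos$, so this holds. A second point is that measurability of $\om(\bmeta\mid\bY,\mX)$ at a fixed boundary point is needed for the expectation to make sense. On $E_\bgamma$, the supremum over $\bb\in\Rp$ can be taken over a countable dense set by continuity of $\tm$ in $\bb$, and $E_\bgamma$ is itself measurable, so $\om$ is measurable. The assumption $l \ge p+1$ and \Cref{eq:null_set_is_null} are not needed for the $-\infty$ conclusion. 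They only guarantee that, on $E_\bgamma$ outside $\cN_\infty$, the value is finite rather than $+\infty$, which is consistent with the upper bound used above.
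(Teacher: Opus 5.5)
Your proof is correct and is essentially the paper's argument: $\om \equiv -\infty$ when $\alpha \in \{0,\infty\}$, and at a horizon point $(\alpha,\bgamma)$ with $\alpha \in \Rpos$ the value is $-\infty$ on the event $\{\bX_1,\ldots,\bX_l\} \not\subseteq \cM_\bgamma$, which has probability $1-q^l>0$, while the positive part is bounded by $(\ln\alpha-1)_+$, so $\om$ is quasi-integrable with expectation $-\infty$. One small correction to your closing remark: for fixed $\alpha\in\Rpos$ the supremum over $\bb$ is at most $\ln\alpha-1$ for every realization, so $l \geq p+1$ is not what prevents the value $+\infty$; it only ensures, via \Cref{eq:null_set_is_null}, that almost every realization satisfies $\y\notin\operatorname{col}(\oX)$, the setting in which $\om$ was introduced in \Cref{lem:USC}.
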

    Before the integrability itself, the local suprema of $\om$ have to be shown to be random variables at all, which is item~(a) of \Cref{cond:wald_integrability}.

    \begin{lemma}[Measurability]
        \label{lem:measurability}
        Fix $l \in \naturals$ and let $\om$ be as in \Cref{lem:USC}. Then, with the convention $\sup \emptyset = -\infty$,
        \begin{enumerate}[label=(\roman*)]
            \item for every $\bmeta \in \overline\Eta$, the map $(\by, \oX) \mapsto \om(\bmeta \mid \by, \oX)$ is Borel measurable on $\Rpos^l \times \reals^{l \times p}$;
            \item for every non-empty open set $U \subseteq \overline\Eta$, the map
            \[
                \Rpos^l \times \reals^{l \times p} \to [-\infty, +\infty] :
                (\by, \oX) \mapsto \sup_{\bmeta \in U} \om(\bmeta \mid \by, \oX)
            \]
            is Borel measurable.
        \end{enumerate}
    \end{lemma}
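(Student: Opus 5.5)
Part~(i) is handled case by case, according to the three types of points in $\overline\Eta$.

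For $\bmeta = (\alpha,\bbeta) \in \Eta$, the map $(\by,\oX) \mapsto \tm(\bmeta\mid\by,\oX)$ is continuous, since $m$ in \Cref{eq:objective_function} is continuous in $(y,\bx)$. It is therefore Borel.

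For $\bmeta = (\alpha,\bgamma)$ with $\alpha \in \Rpos$ and $\bgamma = \dir\bbeta$ a horizon point, $\om$ equals $\mathds{1}_{E_\bgamma}(\oX)\,\sup_{\bb\in\Rp}\tm((\alpha,\bb)\mid\by,\oX)$, with value $-\infty$ off $E_\bgamma$. Here $E_\bgamma = \cbr{\oX : \oX\bbeta = \0}$ is closed. The map $\bb \mapsto \tm((\alpha,\bb)\mid\by,\oX)$ is continuous on $\Rp$, so the supremum equals the supremum over $\bb \in \mathbb{Q}^p$. A countable supremum of continuous functions of $(\by,\oX)$ is Borel.

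For every other point of $\overline\Eta\setminus\Eta$ the map is the constant $-\infty$.

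Part~(ii) reduces the uncountable supremum over $U$ to a countable one. I split $U$ into $U\cap\Eta$ and $U\setminus\Eta$. The first piece is open in $\Eta$, so continuity of $\bmeta\mapsto\tm(\bmeta\mid\by,\oX)$ on $\Eta$ lets me replace it by a countable dense subset $D\subseteq U\cap\Eta$, for instance points with rational coordinates. This gives a Borel map.

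The second piece needs the boundary points with $\alpha\in\Rpos$ and horizon $\bgamma$ for which $\om>-\infty$, that is, the rows of $\oX$ lie in $\cM_\bgamma$. At those points the value is $\sup_\bb \tm((\alpha,\bb)\mid\by,\oX)$. I do not want to enumerate directions, because the constraint $\oX\bbeta=\0$ is not stable under rational approximation. Instead I plan to show that each such value is dominated by $\sup_{D}\tm$, so that the boundary part contributes nothing beyond the countable supremum.

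The argument for domination runs as follows. Take $\bmeta=(\alpha,\dir\bbeta)\in U$ with $\oX\bbeta=\0$. Since $U$ is open in $\overline\Eta$, it contains $(\alpha',\bb+t\bbeta)$ for all $\alpha'$ near $\alpha$ and all large $t$, for each fixed $\bb$. This holds because $\bb+t\bbeta\to\dir\bbeta$ in $\csm(\Rp)$. Along these points $\tm((\alpha,\bb+t\bbeta)\mid\by,\oX)=\tm((\alpha,\bb)\mid\by,\oX)$, since $\oX\bbeta=\0$. Hence $\tm((\alpha,\bb)\mid\by,\oX)\le\sup_{U\cap\Eta}\tm=\sup_D\tm$.

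Taking the supremum over $\bb$ shows the boundary values never exceed $\sup_{U\cap\Eta}\om$. All remaining boundary points have value $-\infty$. Therefore $\sup_U\om=\sup_{D}\tm$, where an empty $D$ can only occur if $U\cap\Eta=\emptyset$, in which case the value is $-\infty$ as well: every boundary point with finite value would force $U\cap\Eta\neq\emptyset$ by the argument above. This makes the map in~(ii) a countable supremum of continuous functions, hence Borel.

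The main obstacle is the step showing that an open neighbourhood of a horizon point in $\overline\Eta$ contains the whole tail of the ray $\bb+t\bbeta$ for every fixed $\bb$. This requires being careful with the definition of convergence to $\dir\bbeta$: take $\lambda_t=1/t$, so that $\lambda_t(\bb+t\bbeta)\to\bbeta$. Convergence then holds in the cosmic topology, and a sequential argument (the space is metrizable) gives eventual membership in $U$. Since $\alpha$ is held fixed at $\alpha\in\Rpos$, the product structure of $d_\infty$ handles the first coordinate.
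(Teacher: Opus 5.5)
Your proof is correct. Part~(i) follows the paper's argument (its Step~4, with the countable-dense reduction of its Step~2). Part~(ii) takes a genuinely different route.

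The paper handles the horizon trace $U\cap(\Rpos\times\hzn(\Rp))$ directly:
\begin{itemize}
\item it covers the trace by countably many boxes $I\times C$, with $I$ an open interval and $C$ a closed cap of unit directions;
\item it proves by a compactness argument that $M_C=\{\oX:\oX\bbeta=\0\text{ for some }\bbeta\in C\}$ is closed;
\item it uses that the finite horizon value does not depend on the direction, so the supremum over a box equals $T_I$ on $M_C$ and $-\infty$ off it.
\end{itemize}

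You instead show that the horizon trace is redundant. Since $\tm$ depends on $\bbeta$ only through $\oX\bbeta$, each value $\tm((\alpha,\bb)\mid\by,\oX)$ entering a finite horizon value at $(\alpha,\dir\bbeta)\in U$ reappears at the interior point $(\alpha,\bb+t\bbeta)$. That point lies in $U$ for large $t$, because $\bb+t\bbeta\to\dir\bbeta$ in $\csm(\Rp)$. Hence $\sup_U\om=\sup_{U\cap\Eta}\tm$ for every $(\by,\oX)$. Your identity never uses $\y\notin\operatorname{col}(\oX)$, so it also holds on realizations where the supremum is $+\infty$, as the statement on the whole of $\Rpos^l\times\reals^{l\times p}$ requires.

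Your route is shorter: it needs neither the box decomposition nor the closedness of $M_C$. It also gives a sharper structural fact. The horizon branch of $\om$ never raises a supremum over an open set, which is natural because those horizon values are limits of interior values along rays in $\ker\oX$.

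The paper's argument, in exchange, relies neither on the invariance of $\tm$ along $\ker\oX$ nor on horizon values being limits of interior values. It needs only two things:
\begin{itemize}
\item the finite horizon value is direction-free;
\item the set of directions where that value is finite is cut out by a closed condition in $\oX$.
\end{itemize}
So the paper's argument would survive a redefinition of the extension on the horizon that is not an upper limit of interior values, whereas yours would not.
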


    The next result supplies item~(b) of \Cref{cond:wald_integrability}, in a stronger form than
    that item asks for: the supremum is over the whole of $\overline\Eta$ rather than over a ball.
    By part~(ii) of \Cref{lem:measurability}, applied to the open set $U = \overline\Eta$, that
    supremum is a random variable.

    \begin{lemma}[Integrability of the global supremum]
        \label{lem:integrability}
        Let $\cbr{(Y_i, \bX_i)}_{i=1}^l$ be iid and such that \Cref{eq:model} holds, and let $(\bY, \mX)$ be as in \Cref{eq:block_notation}, with fixed $l \geq p+1$. Let $\om$ be as defined in \Cref{lem:USC}. Then
        \begin{equation}
            \label{eq:wald_integrability_frechet}
            \ex_{\bmeta_\dagger}\sbr{\rbr{\sup_{\bmeta \in \overline\Eta} \om\rbr{\bmeta \mid \bY, \mX}}_+} < \infty .
        \end{equation}
    \end{lemma}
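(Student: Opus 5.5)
The plan is to reduce the supremum over the compactified space $\overline\Eta$ to a supremum over $\Eta$. We then bound that supremum by an explicit function of the $L^1$-distance from $\Y$ to $\operatorname{col}(\mX)$, and finally control that distance by a small-ball estimate for Gumbel noise. By part~(ii) of \Cref{lem:measurability} with $U=\overline\Eta$, the supremum is a random variable. By \Cref{eq:null_set_is_null} it suffices to work on the complement of $\cN_\infty$. There the values of $\om$ at points of $\overline\Eta\setminus\Eta$ fall into two cases. Either they equal $-\infty$, or, for $\bmeta=(\alpha,\bgamma)$ with $\alpha\in\Rpos$, they equal $\sup_{\bb}\tm((\alpha,\bb)\mid\by,\oX)$, which is a supremum of values of $\tm$ on $\Eta$. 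Hence
\[
  \sup_{\bmeta\in\overline\Eta}\om(\bmeta\mid\by,\oX)=\sup_{\bmeta\in\Eta}\tm(\bmeta\mid\by,\oX).
\]

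Next, by the bound $g(t)\le-\abr{t}$ in \Cref{eq:g_bounds}, for $\bmeta=(\alpha,\bbeta)\in\Eta$ we get
\[
  \tm(\bmeta\mid\by,\oX)\le \ln\alpha-\frac{\alpha}{l}\sum_{i=1}^l\abr{\bbeta^\tr\bx_i-\ln y_i}\le \ln\alpha-\alpha D,
  \qquad D:=\min_{\bb\in\Rp}\frac1l\norm{\y-\oX\bb}_1 .
\]
The minimum is attained, and $D>0$ off $\cN_\infty$. Maximizing over $\alpha$ gives $\sup_\Eta\tm\le-\ln D-1$. It therefore suffices to show $\ex_{\bmeta_\dagger}[\ln_+(1/D)]<\infty$, or equivalently that $\int_0^\infty\pr[D<e^{-t}]\,dt<\infty$.

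For the small-ball estimate I will use the representation $\Y=\mX\bd+\ad^{-1}\bG$ from \Cref{subsec:asymptotic_criterion}. Here $\bG=(G_1,\dots,G_l)^\tr$ is iid standard Gumbel and independent of $\mX$. Since $\operatorname{col}(\mX)$ absorbs $\mX\bd$, we have $D=(\ad l)^{-1}\min_{\bb}\norm{\bG-\mX\bb}_1$. Using $\norm{\cdot}_1\ge\norm{\cdot}_2$, this is at least $(\ad l)^{-1}$ times the Euclidean distance from $\bG$ to $\operatorname{col}(\mX)$. Because $l\ge p+1$, this subspace has dimension at most $p<l$, so there is a unit vector $\bm u=\bm u(\mX)$ orthogonal to it, chosen measurably in $\mX$. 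For this $\bm u$ the distance is at least $\abr{\bm u^\tr\bG}$.

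Conditionally on $\mX$, pick a coordinate $j$ with $\abr{u_j}\ge l^{-1/2}$. Now condition further on $(G_i)_{i\neq j}$. Since the Gumbel density is bounded by $e^{-1}$, the variable $\bm u^\tr\bG$ then has density at most $\sqrt l\,e^{-1}$. Thus $\pr[\abr{\bm u^\tr\bG}\le s\mid\mX]\le 2s\sqrt l/e$, uniformly in $\mX$. It follows that $\pr[D<\eps]\le C\eps$ with $C$ depending only on $l$ and $\ad$, so $\int_0^\infty\pr[D<e^{-t}]\,dt\le C$. This yields \Cref{eq:wald_integrability_frechet} without any assumption on the law of $\bX$.

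The step I expect to need the most care is this uniform-in-$\mX$ small-ball bound, together with the measurable choice of $\bm u$. The latter can be sidestepped by bounding the probability directly via $\pr[\mathrm{dist}(\bG,V)\le s]$ for a fixed subspace $V$ and then integrating over the law of $\mX$.
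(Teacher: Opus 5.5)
Your proposal is correct and follows the paper's proof essentially step for step. Both arguments bound the criterion by $\ln\alpha - \alpha$ times the $\ell_1$ distance to $\operatorname{col}(\mX)$, optimize in $\alpha$, pass to the Euclidean distance, project onto a unit vector orthogonal to $\operatorname{col}(\mX)$, and use the independence of the Gumbel noise from $\mX$ together with a bounded density obtained from a coordinate of size at least $l^{-1/2}$. The only cosmetic difference is that you bound the small-ball probability and integrate via the layer-cake formula, whereas the paper integrates $(-\ln\abr{z})_+$ directly against the bounded density of $\langle \bw, \bG\rangle$.
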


    Write
        \[
            B(\bmeta, \rho) := \cbr{\bmeta' \in \overline\Eta : d_\infty(\bmeta, \bmeta') < \rho}
        \]
    for the open ball of radius $\rho$ around $\bmeta$ in the metric of \Cref{subsec:compactification}.
    Every such ball is a subset of $\overline\Eta$, and a supremum does not decrease when the set over
    which it is taken is enlarged, so \Cref{eq:wald_integrability_frechet} gives item~(b) of
    \Cref{cond:wald_integrability} at every $\bmeta \in \overline\Eta$ and with any $\delta > 0$.

    Together, \Cref{lem:measurability,lem:integrability} say that the random map $\bmeta \mapsto \om(\bmeta \mid \bY, \mX)$ satisfies Wald's integrability condition (\Cref{cond:wald_integrability}) at every point of $\overline\Eta$. We can now present the main result of this section. It covers every near-maximizer, and in particular the CMLE, whose existence in $\Eta$ for $n \ge p+1$ was already established at the end of \Cref{subsec:cmle}.

\begin{theorem}
    \label{thm:consistency}
    Let $\{(Y_i, \bX_i)\}_{i=1}^n$ be an iid sample such that $Y\mid\bX = \bx$ is distributed as in \Cref{eq:model}, with true parameter $\bmeta_\dagger \in \Eta = \Rpos \times \Rp$. Assume that \Cref{cond:identifiability} is satisfied. Let $m(\bmeta \mid y, \bx)$ be the objective function of \Cref{eq:objective_function}. Then, for any sequence of $\Eta$-valued estimators $\nme$ of $\bmeta_\dagger$ such that
    \[
        M_n(\nme) \geq M_n(\bmeta_\dagger) - o_{\pr}(1)
    \]
    as $n \to \infty$, we have
    \[
        \nme \pto \bmeta_\dagger.
    \]
\end{theorem}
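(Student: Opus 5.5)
The plan is to apply \Cref{thm:wald_blocks} on the compact metric space $\Theta = \overline\Eta$ with $d = d_\infty$, taking $\sTheta = \Eta$, $Z_i = (Y_i, \bX_i)$, and $\mm{\theta}{z} = m(\bmeta \mid y, \bx)$. Fix $k = p+1$, so that every block size $l \in \cbr{k, \ldots, 2k-1}$ satisfies $l \geq p+1$. For each such $l$ we take $\om$ to be the extension defined in \Cref{lem:USC}. On the null set $\cN_\infty$ of \Cref{eq:null_set}, where \Cref{lem:USC} gives nothing, we modify $\om$ off $\Eta$ to equal $-\infty$ (or leave it as defined). By \Cref{eq:null_set_is_null} this set has probability zero, so it can be absorbed into the exceptional set $\cN_\theta$ of \Cref{def:upper_semicontinuity}.

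We then check the hypotheses of \Cref{thm:wald_blocks} one at a time.
\begin{enumerate}[label=(\roman*)]
  \item The restriction property \Cref{eq:restriction_property} holds by construction, since $\om = \tm$ on $\Eta$ and $\tm$ is the block average \Cref{eq:l_objective_function}.
  \item Almost sure upper-semicontinuity follows from \Cref{lem:USC} applied to realizations outside $\cN_\infty$, together with \Cref{eq:null_set_is_null}. Measurability of $\om(\bmeta \mid \cdot)$ and of the local suprema is part~(i) and part~(ii) of \Cref{lem:measurability}. Note that open balls in $\overline\Eta$ are open sets, so part~(ii) applies to them. Integrability of the positive part is \Cref{lem:integrability}, since a supremum over a ball is dominated by the supremum over $\overline\Eta$. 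Wald's integrability condition therefore holds at every point with any $\delta$.
  \item \Cref{lem:integrable_boundary} with $l = k$ gives $\ex[\om[k](\bmeta \mid \bY, \mX)] = -\infty$ on $\overline\Eta \setminus \Eta$.
\end{enumerate}
Finally, by \Cref{lem:unique_maximizer} the set $\mTheta$ of \Cref{eq:Theta_0_blocks} equals $\Eta_0 = \cbr{\bmeta_\dagger}$, which is nonempty. The near-maximizer hypothesis is the assumption of the theorem with $\theta_0 = \bmeta_\dagger$.

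\Cref{thm:wald_blocks} now gives $\pr[d_\infty(\nme, \bmeta_\dagger) \geq \eps \text{ and } \nme \in K] \to 0$ for every compact $K \subseteq \overline\Eta$. Since $\overline\Eta$ is itself compact, we may take $K = \overline\Eta$, and the event $\nme \in K$ is then certain. This yields $d_\infty(\nme, \bmeta_\dagger) \pto 0$. Because $\nme$ is $\Eta$-valued and $\bmeta_\dagger \in \Eta$, the equivalence \Cref{eq:metrics_agree} converts this into $\norm{\nme - \bmeta_\dagger} \pto 0$, which is the claim.

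The argument is essentially bookkeeping once the lemmas are in place. The only delicate point is consistency of the $\om$'s across block sizes and the handling of the null set $\cN_\infty$. Upper-semicontinuity in \Cref{def:upper_semicontinuity} must hold outside a single null set, uniformly over all sequences. \Cref{lem:USC} is a deterministic statement for each fixed realization outside $\cN_\infty$, so one null set serves all $\theta$ at once.

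The other point to check is that \Cref{eq:null_set_is_null} is available for every $l$ in the range, which is where the choice $k \geq p+1$ enters. If \Cref{lem:integrable_boundary} demanded a larger block, we would simply increase $k$; nothing else in the argument depends on its value.
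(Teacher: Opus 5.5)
Your proposal is correct and follows the paper's proof step for step. It uses the same instantiation of \Cref{thm:wald_blocks} ($k=p+1$, $\Theta=\overline\Eta$, $\sTheta=\Eta$), the same lemma for each hypothesis, the single null set $\cN_\infty$ serving every $\bmeta$, the choice $K=\overline\Eta$, and the final passage to the Euclidean metric via \Cref{eq:metrics_agree}. Your optional redefinition of $\om$ on $\cN_\infty$ is unnecessary. The extension of \Cref{lem:USC} is already defined for every realization, and \Cref{lem:measurability} is stated for it as is.
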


\section{Asymptotic Normality}
    \label{sec:asymptotic_normality}

Once consistency has been established, asymptotic normality of the CMLE follows from classical likelihood theory. The only additional requirement is the following condition on the covariates.

\begin{condition}
    \label{cond:mgf}
    The moment generating function (MGF) of $\bX_i$ exists, i.e., we have $M_{\bX_i}(\bt) = \ex\sbr{\exp\rbr{\bt^\tr \bX_i}} < \infty$ for all $\bt \in \Rp$ in a neighborhood of $\0$.
\end{condition}

Note that \Cref{cond:identifiability} implies positive definiteness of $\ex\sbr{\bX \bX^\tr}$, since
\begin{align*}
    \ba^\tr \ex\sbr{\bX \bX^\tr}\ba & = \ex\sbr{\rbr{\ba^\tr \bX}^2} > 0,
\end{align*}
for every $\ba \neq \0$, provided the second moments of $\bX$ are finite, which follows from the existence of the MGF in \Cref{cond:mgf}. This matrix enters the Fisher information matrix and is therefore needed only for the asymptotic normality of $\mle$, not for its consistency.

The CMLE $\mle$ was defined in \Cref{subsec:cmle}. For every $n \geq p+1$ it maximizes $M_n$ over $\Eta$ almost surely. The maximizer is unique on an event whose probability tends to one. The precise definition in case the maximizer is not unique does not affect \Cref{thm:consistency} nor the limit distribution below, since the probability of that event tends to zero.

\begin{theorem}
    \label{thm:normality}
    Let $\{(Y_i, \bX_i)\}_{i=1}^n$ be an iid sample such that $Y\mid\bX = \bx$ is distributed as in \Cref{eq:model}, with $\bmeta_\dagger \in \Eta = \Rpos \times \Rp$. Assume that \Cref{cond:identifiability,cond:mgf} are satisfied. Let $m(\bmeta \mid Y, \bX)$ be the objective function of \Cref{eq:objective_function} and let $\mle$ be the CMLE, as fixed in \Cref{subsec:cmle}.
    Let $\Fi$ denote the Fisher information matrix, with
    \[
        \Fi
            =
            \begin{pmatrix}
            \dfrac{1}{\ad^2} \rbr{\dfrac{\pi^2}{6} + (1-\gamma)^2}
            & (1 - \gamma)\ex[\bX]^\tr \\
            (1 - \gamma)\ex[\bX] & \ad^2 \ex[\bX \bX^\tr] \\
            \end{pmatrix},
    \]
    where $\gamma$ is the Euler--Mascheroni constant.
    Then
    \begin{equation*}
        \sqrt n (\mle - \bmeta_\dagger)\rightsquigarrow \Normal\rbr{\0, \Fi^{-1}}.
    \end{equation*}
\end{theorem}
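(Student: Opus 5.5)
We plan to derive the result from Theorem~5.41 of \citet{van2000asymptotic}, the classical asymptotic normality theorem for consistent zeros of estimating equations, applied to the score $\dot m_{\bmeta}(y,\bx) := \nabla_{\bmeta} m(\bmeta \mid y, \bx)$. By \Cref{thm:consistency}, applied to $\nme = \mle$, we have $\mle \pto \bmeta_\dagger$. The true value is an interior point of the open set $\Eta$, and $\mle$ maximizes the smooth function $M_n$ over $\Eta$. Hence, with probability tending to one, $\mle$ solves $n^{-1}\sum_{i} \dot m_{\mle}(Y_i,\bX_i) = \0$. The theorem then yields $\sqrt n(\mle - \bmeta_\dagger) = -\Hes^{-1} n^{-1/2}\sum_i \dot m_{\bmeta_\dagger}(Y_i,\bX_i) + o_{\pr}(1)$, where $\Hes$ is the expected Hessian at the true value. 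We will then check that $\ex[\dot m_{\bmeta_\dagger}] = \0$ and $\ex[\dot m_{\bmeta_\dagger}\dot m_{\bmeta_\dagger}^\tr] = -\Hes = \Fi$. With these identities the multivariate CLT gives the stated limit $\Normal(\0,\Fi^{-1})$.

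\textbf{Score and Fisher information.} We write $t = \alpha(\bbeta^\tr\bx - \ln y)$. From \Cref{eq:objective_function},
\[
\partial_\alpha m = \alpha^{-1} + (\bbeta^\tr\bx-\ln y)(1-e^{t}),
\qquad
\nabla_\bbeta m = \alpha(1-e^{t})\bx .
\]
At the true value we have $t = -G$, with $G$ standard Gumbel and independent of $\bX$, and $\bd^\tr\bX - \ln Y = -G/\ad$. The score is therefore
\[
\bigl(\ad^{-1}[1 - G(1-e^{-G})],\ \ad(1-e^{-G})\bX\bigr).
\]
Put $E = e^{-G}\sim\Exp(1)$, so that $G = -\ln E$. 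The required moments are standard:
\[
\ex[1-E]=0,\quad \ex[G(1-E)]=1,\quad \ex[(1-E)^2]=1,
\]
\[
\ex[G^2(1-E)^2]-1 = \pi^2/6+(1-\gamma)^2,\quad \ex[(1-E)(1-G(1-E))] = -(1-\gamma).
\]
These follow from the digamma and trigamma values at $1$, $2$ and $3$. Independence of $G$ and $\bX$ lets the covariate factor out. This reproduces $\Fi$ up to the sign convention on the off-diagonal block, which must be checked carefully. The Hessian identity follows by differentiating once more and using the same Gumbel moments. Alternatively, it follows from the information equality, once differentiation under the integral is justified.

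\textbf{Regularity conditions of Theorem~5.41.} These are:
\begin{enumerate}[label=(\roman*)]
  \item $\ex\norm{\dot m_{\bmeta_\dagger}}^2<\infty$, which needs $\ex\norm{\bX}^2<\infty$ and is granted by \Cref{cond:mgf};
  \item $\Hes$ exists and is nonsingular;
  \item the second derivatives are dominated in a neighbourhood of $\bmeta_\dagger$ by an integrable function.
\end{enumerate}
Nonsingularity of $\Fi$ holds because $\Fi$ is a Gram matrix. Take $(c,\ba)\ne\0$. The quantity $c\,\partial_\alpha m + \ba^\tr\nabla_\bbeta m$ is a combination of $1-G(1-E)$ and $(1-E)\ba^\tr\bX$. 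It can vanish almost surely only if $c=0$ and $\ba^\tr\bX=0$ almost surely. These two functions of $G$ are linearly independent, and \Cref{cond:identifiability} excludes $\ba^\tr\bX=0$ almost surely for $\ba\ne\0$.

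\textbf{Main obstacle.} We expect the main obstacle to be the local domination of the second derivatives. These involve $e^{t}$, $\bx\bx^\tr e^{t}$ and $(\bbeta^\tr\bx-\ln y)^2 e^{t}$ with $(\alpha,\bbeta)$ near $\bmeta_\dagger$. For $\|\bmeta-\bmeta_\dagger\|<\delta$ we can write
\[
t = (\alpha/\ad)(-G) + \alpha(\bbeta-\bd)^\tr\bX .
\]
This gives $e^{t}\le E^{\alpha/\ad}e^{c\delta\norm{\bX}}$. We would bound $E^{\alpha/\ad}\le 1+E^{2}$ for $\delta$ small. Polynomial factors in $G$ and $\norm{\bX}$ can then be absorbed by Cauchy--Schwarz, using that $\Exp(1)$ has all moments, $|G|$ has all moments, and $\ex[\norm{\bX}^r e^{c\delta\norm{\bX}}]<\infty$ for small $\delta$ by \Cref{cond:mgf}. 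This is exactly where the MGF condition is needed: a dominating function $\dot m$-Lipschitz bound $\norm{\dot m_{\bmeta_1}-\dot m_{\bmeta_2}}\le \dot m(y,\bx)\norm{\bmeta_1-\bmeta_2}$ with $\ex[\dot m^2]<\infty$ then follows from the mean value theorem. The remaining conditions are then routine.
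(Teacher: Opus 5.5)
Your route is essentially the paper's: consistency from \Cref{thm:consistency}, then the hypotheses of van der Vaart's Theorem~5.41 are checked after passing to the standard Gumbel (equivalently, unit-exponential) variable, which is independent of $\bX$. \Cref{cond:mgf} controls the factor $\exp(c\delta\norm{\bX})$. Two points need correcting, however.

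First, there is no sign convention to appeal to on the off-diagonal block. In the parametrization $\sigma(\bx) = \exp(\bbeta^\tr \bx)$ the matrix $\ex[\nabla_\bmeta m \, \nabla_\bmeta m^\tr]$ is fully determined, and your value of $\ex[(1-E)(1-G(1-E))]$ has the wrong sign. Since $\ex[G] = \gamma$, $\ex[GE] = -\Gamma'(2) = \gamma - 1$ and $\ex[GE^2] = -\Gamma'(3) = 2\gamma - 3$, one gets $\ex[G(1-E)^2] = \gamma - 1$. Hence $\ex[(1-E)(1-G(1-E))] = 0 - (\gamma - 1) = 1 - \gamma$, which is exactly the block $(1-\gamma)\ex[\bX]$ of $\Fi$. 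The Hessian would have exposed the slip: $\partial_\alpha \nabla_\bbeta m(\bmeta_\dagger \mid Y, \bX) = \bX(1 - E + GE)$ has mean $-(1-\gamma)\ex[\bX]$, so $-\Hes$ carries the positive sign as well.

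Second, Theorem~5.41 requires an integrable bound on the second derivatives of the score, i.e.\ the third derivatives of $m$; this is the paper's \Cref{lem:domination_third_derivatives}. What your last paragraph actually produces is a local bound on $\nabla^2_\bmeta m$, turned into a Lipschitz score with square-integrable constant. That is the hypothesis of the Lipschitz version, Theorem~5.21 in the same book. That theorem additionally asks that $\bmeta \mapsto \ex[\nabla_\bmeta m(\bmeta \mid Y, \bX)]$ be differentiable at $\bmeta_\dagger$ with derivative $\Hes$, which dominated convergence with the same bound gives. Either theorem works, because your bound $e^{t} \le E^{\alpha/\ad} e^{c\delta\norm{\bX}}$, times polynomials in $\abr{G}$ and $\norm{\bX}$, handles third derivatives equally well. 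But you should commit to one theorem and verify its hypotheses.

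Finally, your Gram-matrix argument for nonsingularity is correct. It uses linear independence of $1 - G(1-E)$ and $1-E$ as functions of $G$, independence of $G$ and $\bX$, and \Cref{cond:identifiability}. It is shorter than the paper's Schur-complement computation with $M \le 1$, and it does not need the explicit form of $\Fi$.
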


The information matrix has a $1 + p$ block structure, and both its off-diagonal block and its dependence on $\ad$ are informative. Write $\mle = (\hat\alpha_n, \hat\bbeta_n)$.
The two estimators $\hat\alpha_n$ and $\hat\bbeta_n$ are asymptotically independent if and only if $\ex[\bX] = \0$. A model with an intercept never has that property, but centering the remaining covariates isolates the dependence:
the tail parameter is then asymptotically correlated with the estimated intercept alone and independent of the estimated slopes. As for $\ad$, it enters $\Fi$ as a scale factor only.
The accuracy of $\hat\alpha_n$ relative to $\ad$ is thus the same whatever the tail, while that of $\hat\bbeta_n$ deteriorates as $\ad$ decreases and the tail grows heavier.

The asymptotic variance is easy to estimate, since $\Fi$ involves the unknown quantities only through $\ad$ and the first two moments of $\bX$: substitute $\hat\alpha_n$ for $\ad$ and the sample moments $n^{-1}\sum_{i=1}^n \bX_i$ and $n^{-1}\sum_{i=1}^n \bX_i \bX_i^\tr$ for $\ex[\bX]$ and $\ex[\bX \bX^\tr]$.
Together with \Cref{thm:normality} and Slutsky's lemma, this yields Wald confidence intervals for $\ad$ and for the components of $\bd$, and confidence ellipsoids for $\bmeta_\dagger$.

We invoke \Cref{cond:mgf} in the proof of \Cref{thm:normality} when deriving integrable uniform bounds on the third-order partial derivatives of $\bmeta \mapsto m(\bmeta \mid y, \bx)$ in order to verify the assumptions of Theorem~5.41 in \citet{van2000asymptotic}. With a different proof technique, a weaker moment assumption on $\bX$ may perhaps already be sufficient. In any case, finite second moments are a necessary assumption for the conclusion of the theorem to hold.

\section{Conclusion}
\label{sec:conclusion}

In this work, we have established consistency and asymptotic normality of the conditional maximum likelihood estimator for the Fréchet regression model with log-linear scale. Consistency was obtained on the full parameter space, thereby extending previous results that relied on restricting the estimator to a compact subset of $\Eta$. The key step was the compactification $\overline\Eta$ together with an extension of Wald's consistency theorem. Beyond the present Fréchet regression model, these tools provide a framework for establishing consistency in M-estimation problems with non-compact parameter spaces and objective functions that do not satisfy the usual integrability or upper-semicontinuity assumptions.

Several limitations should be kept in mind. The analysis assumes exact Fréchet specification and does not cover the broader domain-of-attraction setting. The observations are assumed to be independent and identically distributed. The scale parameter is restricted to a log-linear regression structure, and the tail index $\alpha$ is taken to be constant across covariate values. Hence, directions for future research include allowing the tail index to depend on covariates. It would also be interesting to extend the compactification approach to the full generalized extreme-value and generalized Pareto families, where the support depends on the parameter. Further work is needed to treat dependent or nonidentically distributed observations. Finally, extending the theory to the domain-of-attraction framework would elevate the practical relevance of our asymptotic results.

\section*{Declaration of the use of generative AI}

During the preparation of this manuscript, the authors used Claude Code and ChatGPT for support in checking mathematical proofs, tidying up \LaTeX\ code, ensuring notational consistency, and concise language editing. All AI-assisted output was critically reviewed and verified by the authors, who take full responsibility for the manuscript.

\bibliographystyle{abbrvnat}
\bibliography{References}

\appendix
\crefalias{section}{appendix}
\section{\texorpdfstring{Proofs for \Cref{sec:framework}}{Proofs for Section 2, Fréchet Regression Model and Conditional Maximum Likelihood Estimator}}
\label{sec:proofs_framework}

First we prove an auxiliary result on the profile in $\alpha$ when maximizing the criterion over $\bbeta$ alone.
Fix $k \in \naturals$. For $(\by, \oX) \in \Rpos^k \times \reals^{k \times p}$, put
\begin{equation}
    \label{eq:profile}
    S(\alpha \mid \by, \oX) := \sup_{\bbeta \in \Rp} \tm[k]\rbr{(\alpha, \bbeta) \mid \by, \oX} ,
    \qquad \alpha \in \Rpos ,
\end{equation}
and write
\begin{equation}
    \label{eq:distance}
    \operatorname{d}(\oX, \by) := \inf_{\bb \in \Rp} \norm{\oX \bb - \y}_1
\end{equation}
for the $\ell_1$ distance from $\y$ to the column space of $\oX$; since that space is closed, $\operatorname{d}(\oX, \by) > 0$ if and only if $\y \notin \operatorname{col}(\oX)$.

\begin{lemma}[The profile in $\alpha$]
    \label{lem:profile}
    Let $k \in \naturals$, let $(\by, \oX) \in \Rpos^k \times \reals^{k \times p}$, and let $S$ and $\operatorname{d}$ be as in \Cref{eq:profile,eq:distance}. Then
    \begin{enumerate}[label=(\roman*)]
        \item $S(\cdot \mid \by, \oX)$ is finite and concave on $\Rpos$, and therefore continuous;
        \item $S(\alpha \mid \by, \oX) \leq \ln \alpha - 1$ for every $\alpha \in \Rpos$, so that $S(\alpha \mid \by, \oX) \to -\infty$ as $\alpha \downarrow 0$;
        \item $S(\alpha \mid \by, \oX) \leq \ln \alpha - \alpha \operatorname{d}(\oX, \by) / k$ for every $\alpha \in \Rpos$. If $\y \notin \operatorname{col}(\oX)$, so that $\operatorname{d}(\oX, \by) > 0$, then in addition $S(\alpha \mid \by, \oX) \to -\infty$ as $\alpha \to \infty$.
    \end{enumerate}
\end{lemma}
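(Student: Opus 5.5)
The plan is to work throughout in the reparametrization of \Cref{eq:psi}: for fixed $\alpha$, the map $\bbeta \mapsto \alpha\bbeta = \bb$ is a bijection of $\Rp$. Hence
\[
    S(\alpha \mid \by, \oX) = \sup_{\bb \in \Rp} \psi(\alpha, \bb \mid \by, \oX) = \ln\alpha + \sup_{\bb\in\Rp} \frac1k \sum_{i=1}^k g\rbr{\bb^\tr\bx_i - \alpha \ln y_i}.
\]
Each part will be read off from this formula, using the two bounds in \Cref{eq:g_bounds}.

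For (ii), I apply the first bound $g \le -1$ termwise. This gives $S(\alpha) \le \ln\alpha - 1$, which tends to $-\infty$ as $\alpha\downarrow 0$.

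For (iii), I apply the second bound $g(t)\le -|t|$ termwise, which gives
\[
    \frac1k\sum_i g\rbr{\bb^\tr\bx_i - \alpha\ln y_i} \le -\frac1k \norm{\oX\bb - \alpha\y}_1 .
\]
Writing $\bb = \alpha\bb'$, the infimum over $\bb$ of $\norm{\oX\bb-\alpha\y}_1$ equals $\alpha\operatorname{d}(\oX,\by)$ by homogeneity of the norm. This yields $S(\alpha)\le \ln\alpha - \alpha \operatorname{d}(\oX,\by)/k$. When $\operatorname{d}>0$, the linear term dominates $\ln\alpha$, so $S\to-\infty$ as $\alpha\to\infty$.

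For (i), finiteness from above is (ii). Finiteness from below holds because taking $\bb=\0$ gives the finite value $\ln\alpha + k^{-1}\sum_i g(-\alpha\ln y_i)$.

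The main step is concavity. Here I use part~(i) of \Cref{prop:existence_CMLE}: $\psi$ is jointly concave on $\Rpos\times\Rp$. This holds because $g$ is concave, $(\alpha,\bb)\mapsto \bb^\tr\bx_i-\alpha\ln y_i$ is linear, and $\ln$ is concave. The partial supremum over $\bb$ of a jointly concave function is concave in the remaining variable. To see this, take $\alpha_1,\alpha_2$, $\lambda\in(0,1)$ and arbitrary $\bb_1,\bb_2$. Then
\[
    S(\lambda\alpha_1+(1-\lambda)\alpha_2) \ge \psi\rbr{\lambda\alpha_1+(1-\lambda)\alpha_2,\ \lambda\bb_1+(1-\lambda)\bb_2} \ge \lambda\psi(\alpha_1,\bb_1)+(1-\lambda)\psi(\alpha_2,\bb_2).
\]
Taking suprema over $\bb_1$ and $\bb_2$ gives the concavity inequality for $S$; finiteness is needed here so that no $\infty-\infty$ issue arises. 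Continuity then follows from the standard fact that a finite concave function on an open interval is continuous.

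I expect no genuine obstacle. The one point requiring care is to pass to the $\bb$-parametrization before taking suprema, since $\tm[k]$ is not jointly concave in $(\alpha,\bbeta)$ itself, and to note that the supremum over $\bbeta$ equals the supremum over $\bb$ for each fixed $\alpha>0$.
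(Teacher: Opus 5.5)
Your proposal is correct and follows the paper's proof essentially step for step: the passage to $\psi$ via $\bb = \alpha\bbeta$, the termwise bounds $g \le -1$ and $g(t) \le -\abr{t}$, the lower bound at $\bb = \0$, and concavity of the partial supremum via convex combinations of the pairs $(\alpha_i, \bb_i)$. One small point: the paper proves the concavity in part~(i) of \Cref{prop:existence_CMLE} by referring back to this lemma's proof, so citing that proposition here would be circular. You should instead rely on your own direct justification (the concave $g$ composed with a linear map, plus the concave $\ln$), which you in effect already supply.
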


\begin{lproof}[Proof of \Cref{lem:profile}]
    \Cref{eq:psi} at sample size $k$ reads
    \[
        \psi(\alpha, \bb \mid \by, \oX) = \ln \alpha + \frac1k \sum_{i=1}^k g\rbr{\bb^\tr \bx_i - \alpha \ln y_i},
        \qquad (\alpha, \bb) \in \Rpos \times \Rp .
    \]
    For fixed $\alpha \in \Rpos$, the map $\bbeta \mapsto \bb = \alpha \bbeta$
    is a bijection of $\Rp$ onto itself, so that the supremum defining $S$ in \Cref{eq:profile} may be taken over $\bb$ instead of over $\bbeta$:
    \begin{equation}
        \label{eq:profile_as_partial_sup}
        S(\alpha \mid \by, \oX) = \sup_{\bb \in \Rp} \psi(\alpha, \bb \mid \by, \oX) ,
        \qquad \alpha \in \Rpos .
    \end{equation}

    \emph{Part~(i).} Collecting the $k$ arguments of $g$ into a vector,
    \begin{equation}
        \label{eq:affine_arguments}
        \rbr{\bb^\tr \bx_i - \alpha \ln y_i}_{i=1}^k = \oX \bb - \alpha \y ,
    \end{equation}
    so that $\psi(\cdot, \cdot \mid \by, \oX)$ is the sum of $\ln \alpha$ and of $k^{-1} \sum_{i=1}^k g$ evaluated at the coordinates of the linear map $(\alpha, \bb) \mapsto \oX \bb - \alpha \y$. Each summand is thus a concave function composed with an affine map, since $g''(t) = -\exp(t) < 0$, and $\ln$ is concave on $\Rpos$; hence $\psi(\cdot, \cdot \mid \by, \oX)$ is concave on the convex set $\Rpos \times \Rp$. Partial maximization preserves concavity: for $\alpha_1, \alpha_2 \in \Rpos$, $t \in [0,1]$ and arbitrary $\bb_1, \bb_2 \in \Rp$, the point $\rbr{t\alpha_1 + (1-t)\alpha_2, \; t\bb_1 + (1-t)\bb_2}$ lies in $\Rpos \times \Rp$, so that
    \[
        \psi\rbr{t\alpha_1 + (1-t)\alpha_2, \; t\bb_1 + (1-t)\bb_2 \mid \by, \oX}
        \geq t \, \psi(\alpha_1, \bb_1 \mid \by, \oX) + (1-t) \, \psi(\alpha_2, \bb_2 \mid \by, \oX) ,
    \]
    and taking the supremum over $\bb_1$ and over $\bb_2$ separately yields, through \Cref{eq:profile_as_partial_sup}, the concavity of $S(\cdot \mid \by, \oX)$.

    For the finiteness, the upper bound is part~(ii) and the lower bound follows from
    \[
        S(\alpha \mid \by, \oX) \geq \psi(\alpha, \0 \mid \by, \oX)
        = \ln \alpha + \frac1k \sum_{i=1}^k g\rbr{-\alpha \ln y_i} > -\infty .
    \]
    A finite concave function on an open interval is continuous.

    \emph{Part~(ii).} By the first bound of \Cref{eq:g_bounds}, every summand of $\psi$ is at most $-1$, so that $\psi(\alpha, \bb \mid \by, \oX) \leq \ln \alpha - 1$ for every $\bb \in \Rp$.
    As $\ln \alpha \to -\infty$ for $\alpha \downarrow 0$, the limit follows.

    \emph{Part~(iii).} Apply the second bound of \Cref{eq:g_bounds} to each of the $k$ summands, with $t = \bb^\tr \bx_i - \alpha \ln y_i$. Summing the resulting inequalities yields
    \begin{equation}
        \label{eq:bound1}
        \psi(\alpha, \bb \mid \by, \oX) \leq \ln \alpha - \frac1k \norm{\oX \bb - \alpha \y}_1 ,
        \qquad (\alpha, \bb) \in \Rpos \times \Rp .
    \end{equation}
    Since $\norm{\oX \bb - \alpha \y}_1 \geq \alpha \operatorname{d}(\oX, \by)$,
    taking the supremum over $\bb$ yields the bound of part~(iii). If $\y \notin \operatorname{col}(\oX)$, then $\operatorname{d}(\oX, \by) > 0$,
    and the right-hand side of that bound tends to $-\infty$ as $\alpha \to \infty$.
\end{lproof}

\begin{pproof}[Proof of \Cref{prop:existence_CMLE}\,(i)]
    The concavity of $\psi(\cdot, \cdot \mid \by, \oX)$
    was established in the proof of \Cref{lem:profile}\,(i) for $k = n$.
    For the strictness we pass to the column space
    \[
        V := \operatorname{col}(\oX) \subseteq \reals^n ,
    \]
    and we define
    \begin{equation}
        \label{eq:varphi}
        \varphi(\alpha, \bw) := \ln \alpha + \frac1n \sum_{i=1}^n g\rbr{w_i - \alpha \ln y_i},
        \qquad (\alpha, \bw) \in \Rpos \times V ,
    \end{equation}
    so that $\psi(\alpha, \bb \mid \by, \oX) = \varphi(\alpha, \oX \bb)$ by \Cref{eq:affine_arguments}. Since the functions $\ln$ and $g$ are strictly concave, the function $\varphi$ is easily seen to be strictly concave on $\Rpos \times V$.

    Assume now~(a). Then $\bb \mapsto \oX \bb$ is injective, so that $(\alpha, \bb) \mapsto (\alpha, \oX \bb)$ is an injective affine map of $\Rpos \times \Rp$ onto $\Rpos \times V$. A strictly concave function composed with an injective affine map is strictly concave; hence so is $\psi(\cdot, \cdot \mid \by, \oX)$.
\end{pproof}

\begin{pproof}[Proof of \Cref{prop:existence_CMLE}\,(ii)]
    Assume~(b). By \Cref{eq:psi}, the function $\psi(\cdot, \cdot \mid \by, \oX)$ is $\tm[n](\cdot \mid \by, \oX)$ in the coordinates $(\alpha, \bb) = (\alpha, \alpha \bbeta)$, and $(\alpha, \bbeta) \mapsto (\alpha, \alpha \bbeta)$ is a bijection of $\Rpos \times \Rp$ onto itself. It therefore suffices to show that $\psi(\cdot, \cdot \mid \by, \oX)$ attains its supremum on $\Rpos \times \Rp$. Let $V$ and the function $\varphi$ of \Cref{eq:varphi} be as in the proof of part~(i), so that $\psi(\alpha, \bb \mid \by, \oX) = \varphi(\alpha, \oX \bb)$. As $\bb \mapsto \oX \bb$ maps $\Rp$ onto $V$, the two functions have the same range and the same suprema, and it is enough to maximize $\varphi$. For the same reason, \Cref{eq:profile_as_partial_sup} at $k = n$ implies
    \begin{equation}
        \label{eq:varphi_profile}
        \sup_{\bw \in V} \varphi(\alpha, \bw) = S(\alpha \mid \by, \oX) ,
        \qquad \alpha \in \Rpos.
    \end{equation}

    \emph{Step 1: the two coercivity bounds.} Write $d := \operatorname{d}(\oX, \by)$ for the distance of \Cref{eq:distance}, which is strictly positive by hypothesis~(b). By \Cref{eq:bound1} at $\bb$ with $\bw = \oX \bb$, and by \Cref{eq:varphi_profile} together with parts~(ii) and~(iii) of \Cref{lem:profile}, we obtain, for all $\alpha > 0$ and $\bw \in V$,
    \begin{equation}
        \label{eq:coercive}
        \varphi(\alpha, \bw)
        \leq \ln \alpha - \frac1n \norm{\bw - \alpha \y}_1
        \leq \ln \alpha - \frac{\alpha d}{n}
        \mtext{and}
        \varphi(\alpha, \bw) \leq \ln \alpha - 1 .
    \end{equation}

    \emph{Step 2: the superlevel sets of $\varphi$ are compact.} Fix $L \in \reals$ and put
    \[
        T_L := \cbr{(\alpha, \bw) \in \Rpos \times V : \varphi(\alpha, \bw) \geq L} .
    \]
    Let $(\alpha, \bw) \in T_L$. Three bounds follow in turn.
    \begin{itemize}[leftmargin=*]
        \item
        The second bound in \Cref{eq:coercive} gives $L \leq \ln \alpha - 1$, that is, $\alpha \geq a := \exp(L + 1) > 0$.
        \item
        The first bound in \Cref{eq:coercive} gives $L \leq \ln \alpha - \alpha d / n$. Since $d > 0$, the right-hand side tends to $-\infty$ as $\alpha \to \infty$, so the set of $\alpha$ satisfying this inequality is bounded above by some finite $A = A(L)$.
        \item
        Using the first inequality of \Cref{eq:coercive} once more, now keeping the norm, gives
        \[
            \frac1n \norm{\bw - \alpha \y}_1 \leq \ln \alpha - L \leq \ln A - L ,
        \]
        and therefore, by the triangle inequality and $\alpha \leq A$,
        \[
            \norm{\bw}_1
            \leq \norm{\bw - \alpha \y}_1 + \alpha \norm{\y}_1
            \leq n \rbr{\ln A - L} + A \norm{\y}_1 =: R .
        \]
    \end{itemize}
    Hence $T_L$ is contained in the compact set $[a, A] \times \cbr{\bw \in V : \norm{\bw}_1 \leq R}$.
    As $\varphi$ is continuous, $T_L$ is closed and thus compact too.

    \emph{Step 3: $\varphi$ has exactly one maximizer.} Take $L_0 := \varphi(1, \0)$, which is finite. The set $T_{L_0}$ is nonempty, since it contains $(1, \0)$, and compact, so $\varphi$ attains a maximum on it at some $(\hat\alpha, \hat\bw)$. Moreover, we have $\varphi < L_0 \leq \varphi(\hat\alpha, \hat\bw)$ outside $T_{L_0}$. It follows that $(\hat\alpha, \hat\bw)$ maximizes $\varphi$ over $\Rpos \times V$. That maximizer is unique, $\varphi$ being strictly concave on the convex set $\Rpos \times V$ by the proof of part~(i).

    \emph{Step 4: back to $\Eta$.} Choose any $\hat\bb \in \Rp$ with $\oX \hat\bb = \hat\bw$. Then $\psi(\hat\alpha, \hat\bb \mid \by, \oX) = \varphi(\hat\alpha, \hat\bw)$ is the maximum of $\psi(\cdot, \cdot \mid \by, \oX)$ over $\Rpos \times \Rp$, so that $(\hat\alpha, \hat\bbeta)$ with $\hat\bbeta := \hat\bb / \hat\alpha$ maximizes $\tm[n](\cdot \mid \by, \oX)$ over $\Eta$. For the set of all maximizers, recall that $\psi(\alpha, \bb \mid \by, \oX) = \varphi(\alpha, \oX \bb)$. By Step~3, a pair $(\alpha, \bb)$ maximizes $\psi(\cdot, \cdot \mid \by, \oX)$ if and only if $(\alpha, \oX \bb) = (\hat\alpha, \hat\bw)$, that is, if and only if $\alpha = \hat\alpha$ and $\bb \in \hat\bb + \ker(\oX)$. Dividing the second coordinate by $\hat\alpha$ and using that $\ker(\oX)$ is a subspace,
    we find that the set of maximizers of $\tm[n](\cdot \mid \by, \oX)$ is $\cbr{\hat\alpha} \times (\hat\bbeta + \ker(\oX))$.
\end{pproof}

\begin{pproof}[Proof of \Cref{prop:existence_CMLE}\,(iii)]
    Under~(a), part~(i) makes $\psi(\cdot, \cdot \mid \by, \oX)$ strictly concave, so it has at most one maximizer; under~(b), part~(ii) gives at least one. Since $(\alpha, \bbeta) \mapsto (\alpha, \alpha \bbeta)$ is a bijection of $\Rpos \times \Rp$ onto itself, $\tm[n](\cdot \mid \by, \oX)$ has exactly one maximizer.
\end{pproof}

\begin{pproof}[Proof of \Cref{prop:identifiability}]
    Write $\sigma_a(\bx) := \exp(\bbeta_a^\tr \bx)$ and $\sigma_b(\bx) := \exp(\bbeta_b^\tr \bx)$ for the two scale functions.

    Suppose first that~(ii) holds. Then $\sigma_a(\bX) = \exp(\bbeta_a^\tr \bX) = \exp(\bbeta_b^\tr \bX) = \sigma_b(\bX)$ almost surely, and since by \Cref{eq:pdf} the conditional density depends on $\bmeta$ and $\bx$ only through the pair $(\alpha, \sigma(\bx))$, the two densities coincide on the same event, yielding~(i).

    Conversely, suppose that~(i) holds and fix $\bx$ in the almost sure event on which the two densities agree Lebesgue-almost everywhere,
    so
    \[
        \Frechet\rbr{\alpha_a, \sigma_a(\bx)} = \Frechet\rbr{\alpha_b, \sigma_b(\bx)} .
    \]
    Since the two parameters of the Fréchet family are identifiable, we find
    $\alpha_a = \alpha_b$ and $\sigma_a(\bx) = \sigma_b(\bx)$, the latter being $\rbr{\bbeta_a - \bbeta_b}^\tr \bx = 0$ after taking logarithms. As $\bx$ ranged over an almost sure event, this is~(ii).

    For the final statement, write $\ba := \bbeta_a - \bbeta_b$. If \Cref{cond:identifiability} holds and~(i) is satisfied, then~(ii) gives $\ba^\tr \bX = 0$ almost surely, which forces $\ba = \0$ and hence $\bmeta_a = \bmeta_b$. If \Cref{cond:identifiability} fails, pick $\ba \neq \0$ with $\pr[\ba^\tr \bX = 0] = 1$ and set $\bmeta_b := (\alpha_a, \bbeta_a - \ba)$. Then~(ii) holds while $\bmeta_b \neq \bmeta_a$, and the parameter is not identifiable.
\end{pproof}

\begin{lproof}[Proof of \Cref{lem:unique_maximizer}]
    We show that $\bmeta_\dagger$ is the unique maximizer of the asymptotic criterion function $M$ of \Cref{eq:asymptotic_criterion_function}, which by \Cref{eq:m_upper_bound} is well defined in $[-\infty, \infty)$ for every $\bmeta \in \Eta$. Since $f_{\bmeta_\dagger}^{Y\mid \bX}(\,\cdot \mid \bx)$ is the Lebesgue density of $P_{\bmeta_\dagger}^{(Y\mid\bX)}(\,\cdot \mid \bx)$ and is strictly positive on $\Rpos$, conditioning on $\bX$ and applying Jensen's inequality to the concave function $\ln$ yields
    \begin{align*}
        M(\bmeta) - M(\bmeta_\dagger)
        = \ex_{\bmeta_\dagger}\sbr{\ln \rbr{\frac{f_\bmeta^{Y\mid \bX}}{f_{\bmeta_\dagger}^{Y\mid \bX}}}}
        & = \int_{\Rp} \int_{\Rpos} \ln \rbr{\frac{f_\bmeta^{Y\mid \bX}(y\mid\bx)}{f_{\bmeta_\dagger}^{Y\mid \bX}(y\mid\bx)}} f_{\bmeta_\dagger}^{Y\mid \bX}(y\mid\bx) \, dy \, dP_{\bX}(\bx) \\
        & \leq \int_{\Rp} \ln \rbr{\int_{\Rpos} \frac{f_\bmeta^{Y\mid \bX}(y\mid\bx)}{f_{\bmeta_\dagger}^{Y\mid \bX}(y\mid\bx)} f_{\bmeta_\dagger}^{Y\mid \bX}(y\mid\bx) \, dy} \, dP_{\bX}(\bx) \\
        & = \int_{\Rp} \ln \rbr{\int_{\Rpos} f_\bmeta^{Y\mid \bX}(y\mid\bx) \, dy} \, dP_{\bX}(\bx)
        = 0.
    \end{align*}
    The first equality of the display is \Cref{eq:M_difference}; recall that $M(\bmeta_\dagger)$ is finite by \Cref{eq:m_at_truth}. In particular $M(\bmeta) \leq M(\bmeta_\dagger)$ for every $\bmeta \in \Eta$, so $\bmeta_\dagger$ is a maximizer.

    It remains to show that the inequality is strict as soon as $\bmeta \neq \bmeta_\dagger$. Write
    \[
        I(\bx) := \int_{\Rpos} \ln \rbr{\frac{f_\bmeta^{Y\mid \bX}(y\mid\bx)}{f_{\bmeta_\dagger}^{Y\mid \bX}(y\mid\bx)}} f_{\bmeta_\dagger}^{Y\mid \bX}(y\mid\bx) \, dy
    \]
    for the inner integral in the first line of the display, so that $M(\bmeta) - M(\bmeta_\dagger) = \int_{\Rp} I(\bx) \, dP_{\bX}(\bx)$, and note that the second and third lines of the display show that $I(\bx) \leq \ln 1 = 0$ for every $\bx$. The logarithm is \emph{strictly} concave, so Jensen's inequality applied conditionally on $\bX = \bx$ is an equality if and only if the likelihood ratio $f_\bmeta^{Y\mid \bX}(Y\mid\bx) / f_{\bmeta_\dagger}^{Y\mid \bX}(Y\mid\bx)$ is almost surely constant under $P_{\bmeta_\dagger}^{(Y\mid\bX)}(\,\cdot \mid \bx)$.
    Such a constant is equal to the expectation of the ratio, which then must be equal to one, that is, the two densities agree Lebesgue-almost everywhere.
    For $\bmeta \neq \bmeta_\dagger$, \Cref{prop:identifiability} says exactly that, under \Cref{cond:identifiability}, there is a positive probability that this equality of densities does not happen.
    Therefore $I \leq 0$ everywhere and $I < 0$ on a set of positive $P_{\bX}$-probability, whence
    $M(\bmeta) < M(\bmeta_\dagger)$ for every $\bmeta \in \Eta \setminus \cbr{\bmeta_\dagger}$.
    We conclude that $\Eta_0 = \cbr{\bmeta_\dagger}$.
\end{lproof}

\begin{lproof}[Proof of \Cref{lem:design_regularity}]
    \emph{Part~(i).} Since $\rank(\mX) \leq p < k$, the column space $\operatorname{col}(\oX)$ is a proper linear subspace of $\reals^k$ for every realization $\oX$, whence $\leb_k\rbr{\operatorname{col}(\oX)} = 0$, with $\leb_k$ the Lebesgue measure on $\reals^k$. By \Cref{eq:model}, conditionally on $\mX = \oX$ the vector $\Y$ is distributed as $\oX \bd + \ad^{-1} \bG$, with $\bG$ a vector of $k$ iid standard Gumbel variables, and is therefore absolutely continuous with respect to $\leb_k$. Consequently,
    \[
        \pr_{\bmeta_\dagger}\sbr{\Y \in \operatorname{col}(\mX) \mid \mX = \oX} = 0
    \]
    for $P_{\mX}$-almost every $\oX$, and taking expectations gives $\pr_{\bmeta_\dagger}\sbr{\Y \in \operatorname{col}(\mX)} = 0$.

    \emph{Part~(ii).} Let $S \subseteq \Rp$ be the support of the distribution of $\bX$, that is, the smallest closed set with probability one. \Cref{cond:identifiability} implies that the linear span of $S$ is equal to $\Rp$.

    Pick $\bx_1, \ldots, \bx_p \in S$ that are linearly independent. For $\bz_1,\ldots,\bz_p \in \Rp$, let $D(\bz_1, \ldots, \bz_p)$ denote the determinant of the $p \times p$ matrix with rows $\bz_1^\tr, \ldots, \bz_p^\tr$. Then $D(\bx_1, \ldots, \bx_p) \neq 0$, and $D$ is continuous, so there is $\varepsilon > 0$ such that $D(\bz_1, \ldots, \bz_p) \neq 0$ as soon as $\norm{\bz_j - \bx_j} < \varepsilon$ for every~$j$. Shrinking $\varepsilon$ if necessary, the open balls $B_j := \cbr{\bz \in \Rp : \norm{\bz - \bx_j} < \varepsilon}$ may in addition be taken pairwise disjoint. Each $\bx_j$ lies in the support, so that
    \[
        q := \min_{j = 1, \ldots, p} \pr\sbr{\bX \in B_j} > 0 .
    \]

    Write $A_k := \cbr{\rank(\mX) = p}$, the $k \times p$ matrix $\mX$ being built from the first $k$ observations. If each of the $p$ disjoint balls contains at least one of $\bX_1, \ldots, \bX_k$, then
    $A_k$ occurs. By independence and a union bound,
    \[
        1 - \pr\sbr{A_k}
        \leq \pr\sbr{\bigcup_{j=1}^p \cbr{\bX_1 \notin B_j, \ldots, \bX_k \notin B_j}}
        \leq p \rbr{1-q}^k
        \to 0,
    \]
    as $k\to\infty$.
    Adding rows cannot decrease the rank, so the events $A_k$ increase with $k$ and $\pr\sbr{\bigcup_{k \geq p} A_k} = \lim_{k \to \infty} \pr\sbr{A_k} = 1$. On that union, $\rank(\mX) = p$ for all $k$ large enough.
\end{lproof}

\section{\texorpdfstring{Proofs for \Cref{sec:wald}}{Proofs for Section 3, An Extension of Wald's Consistency Theorem}}
\label{sec:proofs_wald}

\begin{lemma}[Local suprema]
    \label{lem:local_suprema}
    Let $(\Theta, d)$ be a metric space and let $W$, $\cW$, $P = P_W$ and $\varphi$ be as in \Cref{def:upper_semicontinuity}. For a nonempty set $U \subseteq \Theta$, write
    \[
        \varphi_U(w) := \sup \cbr{\varphi_{\bar\theta}(w) : \bar\theta \in U},
        \qquad w \in \cW,
    \]
    and recall that $B(\theta, \rho) = \cbr{\bar\theta \in \Theta : d(\bar\theta, \theta) < \rho}$ is the open ball of radius $\rho > 0$ around $\theta$. Fix $\theta \in \Theta$ and assume that the random map $\theta' \mapsto \varphi_{\theta'}(W)$ is almost surely upper-semicontinuous at $\theta$ (\Cref{def:upper_semicontinuity}) and satisfies Wald's integrability condition at $\theta$ (\Cref{cond:wald_integrability}), the latter with a radius $\delta > 0$. Then the expectations $\ex\sbr{\varphi_\theta(W)}$ and $\ex[\varphi_{B(\theta, \rho)}(W)]$, for $\rho \in (0, \delta]$, are well defined in $[-\infty, \infty)$, and
    \begin{equation}
        \label{eq:local_suprema}
        \ex\sbr{\varphi_{B(\theta, \rho)}(W)} \downarrow \ex\sbr{\varphi_\theta(W)}, \mtext{as} \rho \downarrow 0.
    \end{equation}
\end{lemma}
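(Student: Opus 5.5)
The argument is a monotone convergence argument applied to the decreasing family of local suprema, as in the classical proof of Wald's theorem. We first settle well-definedness. By item~(a) of \Cref{cond:wald_integrability}, each $\varphi_{B(\theta,\rho)}(W)$ with $\rho \in (0,\delta]$ is a $[-\infty,\infty]$-valued random variable. Since $B(\theta,\rho) \subseteq B(\theta,\delta)$ and $\theta \in B(\theta,\rho)$, we have the pointwise chain
\[
  \varphi_\theta(W) \leq \varphi_{B(\theta,\rho)}(W) \leq \varphi_{B(\theta,\delta)}(W).
\]
Taking positive parts and applying item~(b), all three random variables have integrable positive part. Their expectations are therefore well defined in $[-\infty,\infty)$; for $\varphi_\theta(W)$, measurability is part of the setting of \Cref{def:upper_semicontinuity}. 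The same chain shows that $\rho \mapsto \ex[\varphi_{B(\theta,\rho)}(W)]$ is nondecreasing in $\rho$ and bounded below by $\ex[\varphi_\theta(W)]$. Hence the limit as $\rho \downarrow 0$ exists and is at least $\ex[\varphi_\theta(W)]$.

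For the reverse inequality we pass to a sequence $\rho_j := \delta/j \downarrow 0$. Monotonicity in $\rho$ ensures that the limit along this sequence equals the limit as $\rho \downarrow 0$. The key pointwise claim is that, for every $w \notin \cN_\theta$,
\[
  \lim_{j\to\infty} \varphi_{B(\theta,\rho_j)}(w) \leq \varphi_\theta(w).
\]
This follows from upper-semicontinuity. For each $j$, choose $\theta_j \in B(\theta,\rho_j)$ with
\[
  \varphi_{\theta_j}(w) \geq \min\{\varphi_{B(\theta,\rho_j)}(w) - 1/j,\; j\},
\]
which handles the cases where the supremum is $+\infty$ or $-\infty$. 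Then $\theta_j \to \theta$. If the limit $L$ of the suprema exceeded $\varphi_\theta(w)$, we would get $\limsup_j \varphi_{\theta_j}(w) \geq L > \varphi_\theta(w)$ (with the truncation at $j$ covering $L = +\infty$), contradicting \Cref{def:upper_semicontinuity}. Combined with the lower bound, the suprema converge to $\varphi_\theta(W)$ almost surely.

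It remains to exchange limit and expectation. The sequence $\varphi_{B(\theta,\rho_j)}(W)$ is nonincreasing and dominated above by $\varphi_{B(\theta,\delta)}(W)$, whose positive part is integrable. We apply the monotone convergence theorem to the nondecreasing nonnegative sequence
\[
  \bigl(\varphi_{B(\theta,\delta)}(W)\bigr)_+ - \varphi_{B(\theta,\rho_j)}(W) \in [0,\infty],
\]
which has an extended-real-valued limit. Because the subtracted term $\ex[(\varphi_{B(\theta,\delta)}(W))_+]$ is finite, this yields $\ex[\varphi_{B(\theta,\rho_j)}(W)] \downarrow \ex[\varphi_\theta(W)]$, including the case where the limit is $-\infty$.

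The main obstacle is this last exchange when values of $-\infty$ and possibly $+\infty$ are allowed. We must check that the nonnegativity and the arithmetic on $[-\infty,\infty]$ are legitimate. The point is that $\varphi_{B(\theta,\delta)}(W) < \infty$ almost surely by integrability of its positive part, so the differences above are well defined almost surely. The near-maximizer selection in the pointwise step needs the same care with infinite suprema, which is why the truncated choice of $\theta_j$ is used.
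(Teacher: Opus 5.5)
Your proof is correct and follows essentially the same route as the paper. Like the paper, you use the sandwich $\varphi_\theta \le \varphi_{B(\theta,\rho)} \le \varphi_{B(\theta,\delta)}$ for well-definedness, get pointwise convergence along $\rho_j = \delta/j$ from almost sure upper-semicontinuity via near-maximizing points $\theta_j$, and finish with monotone convergence; your only real deviation is to shift by $(\varphi_{B(\theta,\delta)}(W))_+$ rather than by $\varphi_{B(\theta,\delta)}(W)$, which covers the case $\ex[\varphi_{B(\theta,\delta)}(W)] = -\infty$ in one stroke instead of the paper's separate case split, at the cost of the splitting $\ex[a - b_j] = \ex[a] - \ex[b_j]$ (valid since $a \ge 0$ is integrable and $b_j$ has integrable positive part), which you should state explicitly.
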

\begin{lproof}[Proof of \Cref{lem:local_suprema}]
    Let $\delta$ be a radius as in \Cref{cond:wald_integrability} at $\theta$, let $\cN_\theta$ be a null set as in \Cref{def:upper_semicontinuity} at $\theta$, and abbreviate $U := B(\theta, \delta)$ and $U_t := B(\theta, \delta / t)$ for $t \in \naturals$. Since $\theta \in U_{t+1} \subseteq U_t \subseteq U$ for every $t$,
    we have, pointwise on $\cW$,
    \begin{equation}
        \label{eq:local_suprema_sandwich}
        \varphi_\theta(w) \leq \varphi_{U_{t+1}}(w) \leq \varphi_{U_t}(w) \leq \varphi_U(w),
        \qquad t \in \naturals.
    \end{equation}
    Taking positive parts preserves these inequalities, so item~(b) of \Cref{cond:wald_integrability} makes the positive parts of $\varphi_\theta(W)$ and of $\varphi_{B(\theta, \rho)}(W)$, for $\rho \in (0, \delta]$, integrable; all expectations in \Cref{eq:local_suprema} are therefore well defined in $[-\infty, \infty)$. Moreover, $\rho \mapsto \ex[\varphi_{B(\theta, \rho)}(W)]$ is nondecreasing, so its limit for $\rho \downarrow 0$ exists and is equal to $\lim_{t \to \infty} \ex[\varphi_{U_t}(W)]$. It is thus enough to show that the latter limit is $\ex\sbr{\varphi_\theta(W)}$.

    \emph{Step 1: pointwise convergence.} We claim that
    \begin{equation}
        \label{eq:local_suprema_pointwise}
        \lim_{t \to \infty} \varphi_{U_t}(w) = \varphi_\theta(w),
        \qquad w \in \cW \setminus \cN_\theta.
    \end{equation}
    Fix $w \in \cW \setminus \cN_\theta$. By \Cref{eq:local_suprema_sandwich}, the sequence $\rbr{\varphi_{U_t}(w)}_{t \in \naturals}$ is nonincreasing and bounded from below by $\varphi_\theta(w)$, so that its limit exists in $[-\infty, \infty]$ and is at least $\varphi_\theta(w)$. If that limit is $-\infty$, then the same bound forces $\varphi_\theta(w) = -\infty$ as well, and \Cref{eq:local_suprema_pointwise} holds at $w$. Assume therefore that the limit is larger than $-\infty$, and let $c \in \reals$ be such that $c < \lim_{t \to \infty} \varphi_{U_t}(w)$. Then $\varphi_{U_t}(w) > c$ for every $t$, so that, by the definition of a supremum, there exists $\theta_t \in U_t$ with $\varphi_{\theta_t}(w) > c$. Since $d(\theta_t, \theta) < \delta / t$, the sequence $\rbr{\theta_t}_{t \in \naturals}$ converges to $\theta$, and the upper-semicontinuity at $\theta$ yields
    \[
        \varphi_\theta(w) \geq \limsup_{t \to \infty} \varphi_{\theta_t}(w) \geq c.
    \]
    As $c$ was an arbitrary real number less than $\lim_{t \to \infty} \varphi_{U_t}(w)$, this proves \Cref{eq:local_suprema_pointwise}.

    \emph{Step 2: monotone convergence.} Suppose first that $\ex\sbr{\varphi_U(W)} = -\infty$. By \Cref{eq:local_suprema_sandwich}, all the expectations in \Cref{eq:local_suprema} are then equal to $-\infty$ as well, and there is nothing left to prove. Assume therefore that $\ex\sbr{\varphi_U(W)} > -\infty$. Together with item~(b) of \Cref{cond:wald_integrability}, this makes $\varphi_U(W)$ integrable. Define $h_t : \cW \to [0, \infty]$, for $t \in \naturals$, by
    \[
        h_t(w) :=
        \begin{dcases}
            \varphi_U(w) - \varphi_{U_t}(w) & \text{if } \abr{\varphi_U(w)} < \infty, \\
            0 & \text{otherwise,}
        \end{dcases}
    \]
    and let $h_\infty : \cW \to [0, \infty]$ be defined in the same way, with $\varphi_{U_t}$ replaced by $\varphi_\theta$.
    By \Cref{eq:local_suprema_sandwich}, these functions are nonnegative and $h_t$ is nondecreasing in $t$. On $\cbr{\abr{\varphi_U} = \infty}$ they all vanish, so that $h_t \uparrow h_\infty$ there trivially, while on the complement this is \Cref{eq:local_suprema_pointwise}; the convergence thus holds on all of $\cW \setminus \cN_\theta$. The monotone convergence theorem therefore gives $\ex\sbr{h_t(W)} \to \ex\sbr{h_\infty(W)}$ in $[0, \infty]$ as $t \to \infty$. Finally, the integrability of $\varphi_U(W)$ makes $\abr{\varphi_U} < \infty$ hold $P$-almost surely, so that $h_t = \varphi_U - \varphi_{U_t}$ and $h_\infty = \varphi_U - \varphi_\theta$ almost surely; as $-\varphi_{U_t}(W)$ and $-\varphi_\theta(W)$ have integrable negative parts, the expectations may be split,
    \[
        \ex\sbr{h_t(W)} = \ex\sbr{\varphi_U(W)} - \ex\sbr{\varphi_{U_t}(W)},
        \qquad
        \ex\sbr{h_\infty(W)} = \ex\sbr{\varphi_U(W)} - \ex\sbr{\varphi_\theta(W)},
    \]
    both in $(-\infty, \infty]$. Canceling the finite term $\ex\sbr{\varphi_U(W)}$ yields $\ex[\varphi_{U_t}(W)] \downarrow \ex\sbr{\varphi_\theta(W)}$, as required.
\end{lproof}

\begin{lemma}
    \label{lem:summand_integrability}
    Let $l \in \naturals$ and let $V_1, \ldots, V_l$ be independent and identically distributed random variables with values in $[-\infty, \infty)$. If $\ex\sbr{\rbr{\sum_{i=1}^l V_i}_+} < \infty$, then $\ex\sbr{\rbr{V_1}_+} < \infty$.
\end{lemma}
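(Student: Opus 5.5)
The plan is to exploit independence by conditioning on $V_2, \ldots, V_l$ and finding an event of positive probability on which these remaining summands are bounded below. If $(V_1)_+$ failed to be integrable, the sum would then inherit an infinite positive part from that event.

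\emph{Step 1: a lower level for the other summands.} Since each $V_i$ takes values in $[-\infty,\infty)$, and not in $\{-\infty\}$ alone with probability one, I distinguish two cases. If $\pr[V_1 = -\infty] = 1$, then $(V_1)_+ = 0$ almost surely and there is nothing to prove. Otherwise $\pr[V_1 > -\infty] > 0$, and by continuity of probability there exists $c \in \reals$ with $q := \pr[V_1 \geq c] > 0$.

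\emph{Step 2: restriction to the event.} Let $E := \{V_2 \geq c, \ldots, V_l \geq c\}$. By independence and identical distribution, $\pr[E] = q^{l-1} > 0$, and $E$ is independent of $V_1$. On $E$ we have $\sum_{i=1}^l V_i \geq V_1 + (l-1)c$. The map $x \mapsto x_+$ is nondecreasing, so
\[
\rbr{\sum_{i=1}^l V_i}_+ \geq \rbr{V_1 + (l-1)c}_+ \mathds{1}_E .
\]
Taking expectations and using the independence of $V_1$ and $E$ gives
\[
\infty > \ex\sbr{\rbr{\sum_{i=1}^l V_i}_+} \geq q^{l-1}\, \ex\sbr{\rbr{V_1 + (l-1)c}_+}.
\]

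\emph{Step 3: removing the shift.} For real $x$ and $a$ one has $x_+ \leq (x+a)_+ + a_-$, and this remains true at $x = -\infty$. Applying it with $a = (l-1)c$ yields
\[
\ex[(V_1)_+] \leq \ex\sbr{\rbr{V_1 + (l-1)c}_+} + \rbr{(l-1)c}_- < \infty,
\]
since $q^{l-1} > 0$.

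\emph{Expected obstacle.} The only delicate point is the handling of the value $-\infty$. Step 1 must guarantee a finite threshold $c$ of positive probability, and the arithmetic $-\infty + $ real $= -\infty$ must be respected, so that the sum is well defined in $[-\infty,\infty)$ and the inequalities hold pointwise, including on $\{V_1 = -\infty\}$. Apart from that, the argument is a routine application of independence.
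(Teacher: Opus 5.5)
Your proof is correct and follows essentially the paper's route: restrict to the event that $V_2,\ldots,V_l$ are bounded below, which has probability $q^{l-1}>0$ and is independent of $V_1$, and then factor the expectation. The paper argues by contraposition with threshold $0$ (if $\ex[(V_1)_+]=\infty$ then $\pr[V_1\geq 0]>0$, and on $\{\min(V_2,\ldots,V_l)\geq 0\}$ one has $(\sum_{i=1}^l V_i)_+\geq (V_1)_+$), which makes your shift-removal in Step~3 unnecessary.
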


\begin{lproof}[Proof of \Cref{lem:summand_integrability}]
    For $l = 1$ there is nothing to prove, so let $l \geq 2$. Suppose $\ex\sbr{\rbr{V_1}_+} = \infty$; we show that $\ex\sbr{\rbr{\sum_{i=1}^l V_i}_+} = \infty$ as well. Then $q := \pr\sbr{V_1 \geq 0} > 0$, since otherwise $\rbr{V_1}_+$ would vanish almost surely. On the event $A := \cbr{\min(V_2, \ldots, V_l) \geq 0}$, the summands $V_2, \ldots, V_l$ are finite and nonnegative, so that $\sum_{i=1}^l V_i \geq V_1$ and hence $\rbr{\sum_{i=1}^l V_i}_+ \geq \rbr{V_1}_+$. Since $V_1$ is independent of $(V_2, \ldots, V_l)$ and the variables are identically distributed, $\pr\sbr{A} = q^{l-1}$ and
    \begin{align*}
        \ex\sbr{\rbr{\sum_{i=1}^l V_i}_+}
        &\geq \ex\sbr{\rbr{\sum_{i=1}^l V_i}_+ \mathds{1}_{A}} \\
        &\geq \ex\sbr{\rbr{V_1}_+} \pr\sbr{A} = \ex\sbr{\rbr{V_1}_+} q^{l-1} = \infty .
        \qedhere
    \end{align*}
\end{lproof}

\paragraph{Remark.} Applied with $V_i = \mm{\theta}{Z_i}$, \Cref{lem:summand_integrability} shows that \Cref{cond:wald_integrability} does constrain the criterion of a \emph{single} observation, although in \Cref{thm:wald_blocks} it is imposed on a block of $l$ of them. Indeed, for $\theta \in \sTheta$, taking $\theta' = \theta$ in the supremum of item~(b) and using assumption~(i) of that theorem gives $\ex\sbr{\rbr{\sum_{i=1}^l \m{Z_i}}_+} < \infty$, the factor $1/l$ being irrelevant to integrability, and therefore
\begin{equation}
    \label{eq:single_observation_integrability}
    \ex\sbr{\rbr{\m{Z}}_+} < \infty, \qquad \theta \in \sTheta .
\end{equation}
The expectation $\ex\sbr{\m{Z}}$ is thus well defined in $[-\infty, \infty)$, and, no expression $\infty - \infty$ being able to arise, $\ex\sbr{\om\rbr{\theta \mid \bZ^{(l)}}} = \ex\sbr{\m{Z}}$ for every $\theta \in \sTheta$, whatever the block size $l$.

\begin{proof}[Proof of \Cref{thm:wald_blocks}]
    Fix $k \in \naturals$ and let $l \in \cbr{k, \ldots, 2k-1}$. Blocks of $l$ observations take their values in $\cZ^l$, and we write
    \begin{equation}
        \label{eq:extended_block_objective}
        \varphi^{(l)}_\theta\rbr{\bz^{(l)}} := \om\rbr{\theta \mid \bz^{(l)}},
        \qquad
        \varphi^{(l)}_U\rbr{\bz^{(l)}} := \sup_{\theta' \in U} \varphi^{(l)}_{\theta'}\rbr{\bz^{(l)}},
    \end{equation}
    for $\bz^{(l)} = \rbr{z_1, \ldots, z_l} \in \cZ^l$, for $\theta \in \Theta$ and for a nonempty subset $U \subseteq \Theta$. The generic block $\bZ^{(l)}$ of \Cref{sec:wald} has distribution $\Pl$, the $l$-fold product measure of $P_Z$, and, by assumption~(i), $\varphi^{(l)}_\theta\rbr{\bZ^{(l)}} = l^{-1} \sum_{i=1}^l \mm{\theta}{Z_i}$ for every $\theta \in \sTheta$.

    If the supremum in \Cref{eq:Theta_0_blocks} equals $-\infty$, then
    the assertion of the theorem holds trivially. Assume from now on that the supremum is larger than $-\infty$.

    \emph{Step 1: finiteness of the maximum, and the local suprema.} Let $\theta_0$ be the point of $\mTheta$ appearing in the near-maximizer hypothesis of \Cref{thm:wald_blocks}. Its expectation $\ex\sbr{\mm{\theta_0}{Z}}$ is the supremum in \Cref{eq:Theta_0_blocks} and is thus larger than $-\infty$. In view of \Cref{eq:single_observation_integrability}, we find that $\mm{\theta_0}{Z}$ is integrable; write
    \begin{equation}
        \label{eq:maximum_finite}
        \mu_0 := \ex\sbr{\mm{\theta_0}{Z}} \in \reals.
    \end{equation}

    We now apply \Cref{lem:local_suprema} with $\cW = \cZ^l$, with $W = \bZ^{(l)}$ and $P = \Pl$, and with $\varphi^{(l)}$ as in \Cref{eq:extended_block_objective}. Its hypotheses hold at every $\theta \in \Theta$:
    \begin{itemize}
        \item $\varphi^{(l)}_\theta$ is measurable on $\cZ^l$, this being assumed of $\om$ in \Cref{thm:wald_blocks};
        \item the almost sure upper-semicontinuity at $\theta$ that the lemma requires is assumed in condition~(ii) of \Cref{thm:wald_blocks}. Indeed, with $\cW = \cZ^l$ and $W = \bZ^{(l)}$, \Cref{def:upper_semicontinuity} supplies a set $\cN_\theta^{(l)} \subseteq \cZ^l$ with $\Pl\sbr{\cN_\theta^{(l)}} = 0$ such that $\limsup_{j \to \infty} \varphi^{(l)}_{\theta_j}\rbr{\bz^{(l)}} \leq \varphi^{(l)}_\theta\rbr{\bz^{(l)}}$ for every $\bz^{(l)} \notin \cN_\theta^{(l)}$ and every sequence $\theta_j \to \theta$; the lemma allows its null set to depend on $\theta$ in the same way;
        \item \Cref{cond:wald_integrability} at $\theta$, the second hypothesis of the lemma, is assumed in the same condition~(ii), again with $\cW = \cZ^l$ and $W = \bZ^{(l)}$.
    \end{itemize}
    For every $\theta \in \Theta$, the lemma therefore yields
    \begin{equation}
        \label{eq:MCT_general}
        \ex\sbr{\sup_{\theta' \in B(\theta, \rho)} \varphi^{(l)}_{\theta'}\rbr{\bZ^{(l)}}}
        \downarrow
        \ex\sbr{\varphi^{(l)}_{\theta}\rbr{\bZ^{(l)}}},
        \mtext{as} \rho \downarrow 0,
    \end{equation}
    with both sides well defined in $[-\infty, \infty)$; for $\theta \in \sTheta$, the right-hand side is $\ex\sbr{\m Z}$.

    \emph{Step 2: the set $B$ and a finite cover of it.} Fix $\eps > 0$ and a compact set $K \subseteq \Theta$, and put
    \[
        B := \cbr{\theta \in K : d\rbr{\theta, \mTheta} \geq \eps} .
    \]
    Since $\theta \mapsto d\rbr{\theta, \mTheta}$ is continuous, $B$ is a closed subset of $K$ and therefore compact, and the assertion of the theorem is that $\pr[\est_n \in B] \to 0$. The estimator being $\sTheta$-valued, that event is the event $\cbr{\est_n \in B \cap \sTheta}$; if $B \cap \sTheta$ is empty, its probability is zero for every $n$, so assume that $B \cap \sTheta$ is nonempty. As $\eps > 0$, no point of $B$ belongs to $\mTheta$.

    Let $\theta \in B$. Since $\theta \notin \mTheta$, we have
    \[
        \ex\sbr{\varphi^{(k)}_\theta\rbr{\bZ^{(k)}}}
        < \ex\sbr{\varphi^{(k)}_{\theta_0}\rbr{\bZ^{(k)}}}
        = \mu_0 ,
    \]
    and this whether $\theta$ lies in $\sTheta$, where the left-hand side equals $\ex\sbr{\m Z}$ and $\theta$ fails to attain the supremum in \Cref{eq:Theta_0_blocks}, or outside it, where the left-hand side is $-\infty$ by assumption~(iii). Let $\delta_\theta > 0$ be the radius supplied by \Cref{cond:wald_integrability} at $\theta$ for blocks of size $k$. By \Cref{eq:MCT_general}, applied with $l = k$ at the point $\theta$, there exists a radius $\rho_\theta \in (0, \delta_\theta]$ with
    \begin{equation}
        \label{eq:ball_below_max_general}
        \ex\sbr{\varphi^{(k)}_{B\rbr{\theta, \rho_\theta}}\rbr{\bZ^{(k)}}} < \mu_0 .
    \end{equation}
    The balls $B\rbr{\theta, \rho_\theta}$, for $\theta \in B$, form an open cover of the compact set $B$. Choose a finite subcover, indexed by $\theta_1, \ldots, \theta_q \in B$ with radii $\rho_1, \ldots, \rho_q$, and put
    \begin{equation}
        \label{eq:c_definition_general}
        c := \max_{a = 1, \ldots, q} \ex\sbr{\varphi^{(k)}_{B\rbr{\theta_a, \rho_a}}\rbr{\bZ^{(k)}}} \in [-\infty, \infty) .
    \end{equation}
    Being a maximum of finitely many quantities each of which is smaller than $\mu_0$ by \Cref{eq:ball_below_max_general}, it satisfies $c < \mu_0$.

    \emph{Step 3: the remainder block.} Let $r \in \cbr{k, \ldots, 2k-1}$ and write $\bZ_0^{(r)} := \rbr{Z_1, \ldots, Z_r}$ for the remainder block, carrying the index $0$ because it is placed before the others in Step~4. Repeat the covering argument of Step~2 for blocks of size $r$: at each $\theta \in B$, take the ball $B\rbr{\theta, \sigma_\theta}$ with $\sigma_\theta$ the radius supplied by \Cref{cond:wald_integrability} at $\theta$ for that block size, and extract a finite subcover indexed by $\vartheta_1, \ldots, \vartheta_s \in B$ with radii $\sigma_1, \ldots, \sigma_s$. We obtain the random variable
    \begin{equation}
        \label{eq:remainder_bound_general}
        C_r\rbr{\bZ_0^{(r)}} := \max_{b = 1, \ldots, s} \varphi^{(r)}_{B\rbr{\vartheta_b, \sigma_b}}\rbr{\bZ_0^{(r)}} .
    \end{equation}
    It is measurable and has integrable positive part, being a maximum of finitely many such variables; in particular $\rbr{C_r\rbr{\bZ_0^{(r)}}}_+$ is finite almost surely. Further, it dominates the remainder uniformly over $B$, since every $\theta \in B$ lies in one of the balls of the subcover.

    \emph{Step 4: the sample split into $k$ subsequences.} For $r \in \cbr{k, \ldots, 2k-1}$ and $\ell \in \naturals$, put $n_r(\ell) := k \ell + r$. Every integer $n \geq 2k$ is of this form for exactly one pair $(r, \ell)$, namely $\ell = \left\lfloor n / k \right\rfloor - 1$ and $r = n - k\ell$, so that the $k$ sequences $\rbr{n_r(\ell)}_{\ell \in \naturals}$, one for each $r$, partition $\cbr{2k, 2k+1, \ldots}$ between them. A sequence of numbers converges to zero as soon as each of finitely many subsequences covering it does; it is therefore enough to prove, separately for each of the $k$ values of $r$, that
    \begin{equation}
        \label{eq:subsequence_goal}
        \pr\sbr{\est_{n_r(\ell)} \in B} \to 0, \mtext{as} \ell \to \infty .
    \end{equation}
    Fix $r$ from here on and abbreviate $n := n_r(\ell)$. The sample $Z_1, \ldots, Z_n$ is split by putting the remainder first: the first $r$ observations form $\bZ_0^{(r)}$, and the remaining $k\ell$ form $\ell$ blocks of size $k$,
    \[
        \bZ_i^{(k)} := \rbr{Z_{r + (i-1)k + 1}, \ldots, Z_{r + ik}},
        \qquad i = 1, \ldots, \ell .
    \]
    Because $r$ is held fixed, the $i$\textsuperscript{th} block is one and the same random vector for every $\ell$, and so is $\bZ_0^{(r)}$. The whole subsequence is thus driven by a single infinite iid sequence $\bZ_1^{(k)}, \bZ_2^{(k)}, \ldots$ of blocks, each distributed as $\bZ^{(k)}$, together with the one random vector $\bZ_0^{(r)}$, and letting $\ell \to \infty$ is an ordinary limit along that sequence.

    \emph{Step 5: a measurable upper bound for the criterion over $B$.} Let $\theta \in B \cap \sTheta$ and pick $a \in \cbr{1, \ldots, q}$ with $\theta \in B\rbr{\theta_a, \rho_a}$. On $\sTheta$ the criterion of a block is its average, by assumption~(i). Splitting $M_n(\theta)$ according to the blocks and bounding the remainder by \Cref{eq:remainder_bound_general} and each block by the supremum over the ball, we find
    \begin{align*}
        M_n(\theta)
        &= \frac rn \, \varphi^{(r)}_\theta\rbr{\bZ_0^{(r)}} + \frac kn \sum_{i=1}^\ell \varphi^{(k)}_\theta\rbr{\bZ_i^{(k)}} \\
        &\leq \frac rn \rbr{C_r\rbr{\bZ_0^{(r)}}}_+ + \frac kn \sum_{i=1}^\ell \varphi^{(k)}_{B\rbr{\theta_a, \rho_a}}\rbr{\bZ_i^{(k)}}
        \leq T_{r, \ell} ,
    \end{align*}
    where the random variable $T_{r,\ell}$ is defined by
    \begin{equation}
        \label{eq:T_definition_general}
        T_{r, \ell} := \frac{r \rbr{C_r\rbr{\bZ_0^{(r)}}}_+}{n_r(\ell)}
        + \max_{a = 1, \ldots, q} \frac{k}{n_r(\ell)} \sum_{i=1}^\ell \varphi^{(k)}_{B\rbr{\theta_a, \rho_a}}\rbr{\bZ_i^{(k)}} .
    \end{equation}
    The right-hand side does not depend on $\theta$, so that
    \begin{equation}
        \label{eq:sup_below_T_general}
        \sup_{\theta \in B \cap \sTheta} M_n(\theta) \leq T_{r, \ell}
    \end{equation}
    pointwise on the underlying probability space.

    \emph{Step 6: the limit of the upper bound.} We use the strong law of large numbers
    for quasi-integrable variables: if $X, X_1, X_2, \ldots$ are iid with values in $[-\infty, \infty)$ and with $\ex\sbr{X_+} < \infty$, then
    \begin{equation}
        \label{eq:LLN_quasi_general}
        \frac1\ell \sum_{i=1}^\ell X_i \to \ex\sbr{X} \in [-\infty, \infty)
        \mtext{almost surely, as} \ell \to \infty,
    \end{equation}
    the limit being $-\infty$ precisely when $\ex\sbr{X_-} = \infty$. The case $\ex\sbr{X_-} < \infty$ is the ordinary strong law; the case $\ex\sbr{X_-} = \infty$ is \citet[Theorem~2.4.5]{durrett2019probability}, applied to $-X$.

    Apply \Cref{eq:LLN_quasi_general} to each of the $q$ sequences $\rbr{\varphi^{(k)}_{B(\theta_a, \rho_a)}\rbr{\bZ_i^{(k)}}}_{i \in \naturals}$, which are iid by Step~4 and have integrable positive part by \Cref{cond:wald_integrability}, since $\rho_a \leq \delta_{\theta_a}$. As $k \ell / n_r(\ell) \to 1$ and $r \rbr{C_r\rbr{\bZ_0^{(r)}}}_+ / n_r(\ell) \to 0$ almost surely, the first term of \Cref{eq:T_definition_general} vanishes in the limit and the second, being a maximum of finitely many convergent sequences, converges to the maximum of their limits. Hence, with $c$ as in \Cref{eq:c_definition_general},
    \begin{equation}
        \label{eq:T_limit}
        T_{r, \ell} \to c < \mu_0
        \mtext{almost surely, as} \ell \to \infty.
    \end{equation}

    \emph{Step 7: near-maximizers.} Define, for $n \in \naturals$, the gap
    \begin{equation}
        \label{eq:varrho_definition}
        \varrho_n :=
        \begin{dcases}
            0 & \text{if } M_n\rbr{\est_n} \geq M_n\rbr{\theta_0}, \\
            M_n\rbr{\theta_0} - M_n\rbr{\est_n} & \text{otherwise,}
        \end{dcases}
    \end{equation}
    with values in $[0, \infty]$. Since $m$ takes its values in $\reals \cup \cbr{-\infty}$, the criterion $M_n\rbr{\theta_0}$ is never $+\infty$, so that the difference in the second line of \Cref{eq:varrho_definition} is well defined; it equals $+\infty$ exactly when $M_n\rbr{\est_n} = -\infty$ while $M_n\rbr{\theta_0}$ is finite. By construction $M_n\rbr{\est_n} \geq M_n\rbr{\theta_0} - \varrho_n$, and $\varrho_n$ is the smallest $[0, \infty]$-valued random variable with that property; the near-maximizer hypothesis of \Cref{thm:wald_blocks} therefore says precisely that $\varrho_n = o_{\pr}(1)$.

    On the event $\cbr{\est_n \in B}$, which by Step~2 is the event $\cbr{\est_n \in B \cap \sTheta}$, we have $M_n\rbr{\est_n} \leq \sup_{\theta \in B \cap \sTheta} M_n(\theta) \leq T_{r, \ell}$ by \Cref{eq:sup_below_T_general}, whence the inclusion
    \begin{equation}
        \label{eq:wald_inclusion}
        \cbr{\est_{n_r(\ell)} \in B}
        \subseteq
        \cbr{T_{r, \ell} \geq M_{n_r(\ell)}\rbr{\theta_0} - \varrho_{n_r(\ell)}} ,
    \end{equation}
    in which
    every quantity is measurable.

    The variable $\mm{\theta_0}{Z}$ is integrable by Step~1, so that $M_n\rbr{\theta_0} \to \mu_0$ almost surely by the law of large numbers, while $\varrho_n = o_{\pr}(1)$ by assumption. In view of \Cref{eq:T_limit}, the probability of the event on the right-hand side of \Cref{eq:wald_inclusion} therefore tends to zero, which is \Cref{eq:subsequence_goal}.

    Since \Cref{eq:subsequence_goal} holds for each of the $k$ values of $r$, and since these $k$ subsequences exhaust all $n \geq 2k$ by Step~4, we conclude that $\pr[\est_n \in B] \to 0$ as $n \to \infty$, that is,
    \[
        \pr\sbr{d\rbr{\est_n, \mTheta} \geq \eps \; \text{ and } \; \est_n \in K} \to 0,
        \mtext{as} n \to \infty . \qedhere
    \]
\end{proof}

\section{\texorpdfstring{Proofs for \Cref{sec:consistency}}{Proofs for Section 4, Consistency}}
\label{sec:proofs_consistency}
    \begin{lproof}[Proof of \Cref{lem:USC}]
        Let us analyze the behavior of sequences $\bmeta_k \to \bmeta$ according to the location of $\bmeta$ in the extended parameter space $\overline \Eta$, first for sequences inside $\Eta$ and then for sequences in $\overline \Eta \setminus \Eta$.
        The hypothesis $\y \notin \operatorname{col}(\oX)$ can be rewritten as $(\by, \oX) \notin \cN_\infty$ and also as $\operatorname{d}(\oX, \by) > 0$ in the notation of \Cref{eq:null_set,eq:distance}.

        Both the fourth case of Step~1 and step~(ii) of Step~2 rest on the behavior of the supremum of $\tm$ over its second coordinate. For every sequence $\alpha_k \to \alpha$ in $\Rpos$,
        \begin{equation}
            \label{eq:horizon_sup_usc}
            \limsup_{k \to \infty} \; \sup_{\bb \in \Rp} \tm((\alpha_k, \bb) \mid \by, \oX)
            \leq
            \sup_{\bb \in \Rp} \tm((\alpha, \bb) \mid \by, \oX).
        \end{equation}
        Indeed, the supremum over the second coordinate is exactly the profile of \Cref{eq:profile} at block size $l$,
        \begin{equation}
            \label{eq:om_horizon_is_S}
            \sup_{\bb \in \Rp} \tm\rbr{(\alpha, \bb) \mid \by, \oX} = S(\alpha \mid \by, \oX) ,
            \qquad \alpha \in \Rpos ,
        \end{equation}
        and part~(i) of \Cref{lem:profile} says that $S(\cdot \mid \by, \oX)$ is finite and continuous on $\Rpos$. As the sequence $\alpha_k \to \alpha$ is arbitrary, \Cref{eq:horizon_sup_usc} follows.

        \emph{Step 1: sequences in $\Eta$.} Consider sequences $\bmeta_k \in \Eta$ such that $\bmeta_k \to \bmeta$. The four cases below exhaust the possible positions of the limit $\bmeta$ in $\overline \Eta$, the second and the third leaving the second coordinate of $\bmeta$ free:
        \begin{itemize}[leftmargin=*]
        \item $\bmeta \in \Eta$

            Since, for fixed $(y,\bx)$, the map $\bmeta \mapsto m(\bmeta\mid y,\bx)$ is continuous on $\Eta$, it follows that
            $\tm(\bmeta\mid \by,\oX)$ is continuous on $\Eta$ as well. Hence $\om(\cdot\mid \by,\oX)$ is continuous, and also upper-semicontinuous, in the interior of $\overline \Eta$. This holds trivially even when considering sequences $\bmeta_k \in \overline \Eta$, since $\bmeta_k \to \bmeta$ implies that there is some $K_0 \in \naturals$ such that $\bmeta_k \in \Eta$ for every $k>K_0$.

        \item $\bmeta \in \overline \Eta \cap \{\alpha = 0\}$

            Note that $g(z)=z-\exp(z)\le -1$ for all $z\in\reals$. Therefore, $m(\bmeta_k\mid y,\bx) \le \ln\alpha_k-1.$ Hence, $m(\bmeta_k\mid y, \bx)\to -\infty$ whenever $\alpha_k\to 0$, regardless of the behavior of $\bbeta_k$. It follows that $\lim_{k\to\infty}\tm(\bmeta_k\mid \by,\oX)= \om(\bmeta\mid \by,\oX) = -\infty$.

        \item $\bmeta \in \overline \Eta \cap \{\alpha = \infty\}$

            Then $\alpha_k \to \infty$. Since $g(z) = z - e^z \leq -\abr{z}$, we have
            \begin{align*}
                \limsup_{k\to \infty} \tm(\bmeta_k \mid \by, \oX) & \leq \limsup_{k \to \infty} \rbr{\ln \alpha_k - \alpha_k \frac1l \sum_{i=1}^l \abr{\bbeta_k^\tr \bx_i - \ln y_i}} \\
                & \leq \limsup_{k \to \infty} \rbr{\ln \alpha_k - \inf_{\bb \in \Rp}\alpha_k \frac1l \sum_{i=1}^l \abr{\bb^\tr \bx_i - \ln y_i}}\\
                & = \limsup_{k \to \infty} \Bigg(\ln \alpha_k - \frac{\alpha_k}l \underbrace{\inf_{\bb \in \Rp} \norm{\oX \,\bb - \y }_1}_{= \operatorname{d}(\oX, \by)}\Bigg).
            \end{align*}
            Since $\operatorname{d}(\oX, \by) > 0$ by hypothesis, the $\limsup$ is equal to $-\infty = \om(\bmeta\mid \by,\oX)$.

        \item $\bmeta=(\alpha,\bgamma)$ with $\alpha\in\Rpos$ and $\bgamma\in \hzn(\Rp)$

            Let $\bgamma=\dir\bbeta$. Consider any sequence $\bmeta_k \in \Eta$ such that $\bmeta_k = (\alpha_k, \bbeta_k) \to (\alpha,\bgamma)$. Since $\bbeta_k \to \bgamma$, there exists $\lambda_k \downarrow 0$ such that $\lambda_k \bbeta_k \to \bbeta$. In other words, $\bbeta_k = \lambda_k^{-1}{\bbeta} + \bb_k$, for some $\bb_k = o(\lambda_k^{-1})$. Consider $\oX$ such that $\bx_i \not \in \cM_{\bgamma}$ for at least one $i \in \cbr{1, \ldots, l}$. Write
            \begin{align*}
            \abr{r_i(\bbeta_k)} & =\abr{\ln\sigma(\bx_i)-\ln y_i} \\
            & =\abr{\bbeta_k^\tr \bx_i-\ln y_i} \\
            & = \big|\lambda_k^{-1}\underbrace{{\bbeta}^\tr \bx_i}_{\neq 0} + \bb_k^\tr \bx_i - \ln y_i\big|
             \to \infty, \mtext{as $k \to \infty$.}
            \end{align*}
            Since
            \begin{equation*}
                \tm(\bmeta_k\mid \by,\oX) \leq
                \ln\alpha_k-\frac{\alpha_k}l\sum_{i=1}^l\abr{r_i(\bbeta_k)},
            \end{equation*}
            it follows that $\tm(\bmeta_k\mid \by,\oX) \to -\infty$.

            Now consider the case in which $\cbr{\bx_1, \ldots, \bx_l} \subseteq \cM_\bgamma$, so that the second line of the definition of $\om$ applies at $\bmeta$.
            Bounding $\tm$ by its supremum over the second coordinate and then invoking \Cref{eq:horizon_sup_usc}, we find
            \begin{align*}
                \limsup_{k \to \infty} \tm(\bmeta_k \mid \by, \oX)
                & \leq \limsup_{k \to \infty} \sup_{\bb \in \Rp} \tm((\alpha_k, \bb) \mid \by, \oX) \\
                & \leq \sup_{\bb \in \Rp} \tm((\alpha, \bb) \mid \by, \oX)
                = \om(\bmeta \mid \by, \oX) .
            \end{align*}
        \end{itemize}

        \emph{Step 2: sequences in $\overline \Eta \setminus \Eta$.}
        Throughout, $(\by, \oX)$ is as in the statement, so that $\operatorname{d}(\oX, \by) > 0$ by hypothesis. Step~1 has settled the sequences lying in $\Eta$. An arbitrary sequence in $\overline \Eta$ splits into at most two subsequences, one in $\Eta$ and one in $\overline \Eta \setminus \Eta$, both converging to the same limit, and a limit superior along a sequence is the largest of the limits superior along the members of any finite partition of the index set. It therefore remains to consider sequences $\bmeta_k \in \overline \Eta \setminus \Eta$. Since $\Rpos$ is open in $[0, \infty]$ and $\Rp$ is open in $\csm(\Rp)$,
        the set $\Eta$ is open in $\overline \Eta$ and its complement is closed, so that such a sequence has its limit in $\overline \Eta \setminus \Eta$ as well.

        \emph{(i) Two regions.}
        For $\bgamma = \dir \bb \in \hzn(\Rp)$, we have
        \[
            \cbr{\bx_1, \ldots, \bx_l} \subseteq \cM_\bgamma
            \iff
            \bb^\tr \bx_i = 0 \mtext{for all} i = 1, \ldots, l
            \iff
            \oX \bb = \0 ,
        \]
        so the horizon points at which the second line of the definition of $\om$ applies are exactly those of $\hzn(\ker \oX)$, the horizon of the kernel of $\oX$. In particular, that set is empty if and only if $\oX$ has full column rank. It is closed in $\hzn(\Rp)$: if $\bgamma_k = \dir \bb_k \in \hzn(\ker \oX)$ and $\bgamma_k \to \bgamma = \dir \bb$, then $\lambda_k \bb_k \to \bb$ for some scalars $\lambda_k > 0$, and therefore $\oX \bb = \lim_{k \to \infty} \oX (\lambda_k \bb_k) = \0$, that is, $\bgamma \in \hzn(\ker \oX)$.
        Therefore, if a convergent sequence of directions $\bgamma_k$ satisfies $\cbr{\bx_1, \ldots, \bx_l} \subseteq \cM_{\bgamma_k}$ for all large $k$, then also $\cbr{\bx_1, \ldots, \bx_l} \subseteq \cM_{\bgamma}$ for the limit $\bgamma$ of the sequence.

        The boundary is
        \[
            \overline \Eta \setminus \Eta
            =
            \rbr{\cbr{0, \infty} \times \csm(\Rp)} \cup \rbr{\Rpos \times \hzn(\Rp)} ,
        \]
        and by \Cref{lem:USC} and the previous paragraph it is the disjoint union of two regions:
        \begin{itemize}[leftmargin=*]
            \item $\Rpos \times \hzn(\ker \oX)$, on which $\om(\bmeta \mid \by, \oX) = S(\alpha \mid \by, \oX)$, finite and continuous in $\alpha$ by \Cref{lem:profile}\,(i), and not depending on $\bgamma$ at all, the supremum over the whole of $\Rp$ having erased the direction;
            \item the remainder, on which $\om(\cdot \mid \by, \oX) = -\infty$.
        \end{itemize}

        \emph{(ii) One uniform bound.} Extend $S(\cdot \mid \by, \oX)$ to $[0, \infty]$ by setting $S(0 \mid \by, \oX) := S(\infty \mid \by, \oX) := -\infty$. Then
        \begin{equation}
            \label{eq:om_below_S}
            \om(\bmeta \mid \by, \oX) \leq S(\alpha \mid \by, \oX),
            \mtext{for every} \bmeta = (\alpha, \bt) \in \overline \Eta ,
        \end{equation}
        and $S(\cdot \mid \by, \oX) : [0, \infty] \to [-\infty, \infty)$ is continuous. For the bound, run through the three lines of the definition of $\om$ in \Cref{lem:USC}: on $\Eta$, $\tm((\alpha, \bbeta) \mid \by, \oX) \leq \sup_{\bb \in \Rp} \tm((\alpha, \bb) \mid \by, \oX) = S(\alpha \mid \by, \oX)$ by \Cref{eq:om_horizon_is_S}; on $\Rpos \times \hzn(\ker \oX)$ there is equality; and elsewhere the left-hand side is $-\infty$. The continuity on $\Rpos$ follows from \Cref{lem:profile}\,(i) and the continuity at the two endpoints is the statement that $S(\alpha \mid \by, \oX) \to -\infty$ as $\alpha \downarrow 0$ and as $\alpha \to \infty$, see \Cref{lem:profile}~(ii) and~(iii).

        \emph{(iii) Sequences.} Let $\bmeta_k = (\alpha_k, \bt_k) \to \bmeta = (\alpha, \bt)$, all of them in $\overline \Eta \setminus \Eta$. Convergence in the product space implies $\alpha_k \to \alpha$ in $[0, \infty]$, so that \Cref{eq:om_below_S} and the continuity of $S(\cdot \mid \by, \oX)$ give
        \begin{equation}
            \label{eq:limsup_below_S}
            \limsup_{k \to \infty} \om(\bmeta_k \mid \by, \oX)
            \leq
            \lim_{k \to \infty} S(\alpha_k \mid \by, \oX)
            =
            S(\alpha \mid \by, \oX) .
        \end{equation}
        The limit lies in exactly one of three positions, since $\bmeta \not\in \Eta$ forces $\bt$ to be a horizon point as soon as $\alpha \in \Rpos$, and that horizon point either lies in $\hzn(\ker \oX)$ or does not. In the first two positions, the proof is finished by \Cref{eq:limsup_below_S}:
        \begin{enumerate}
            \item $\alpha \in \cbr{0, \infty}$: then $\om(\bmeta \mid \by, \oX) = -\infty = S(\alpha \mid \by, \oX)$, whatever $\bt$ is;
            \item $\alpha \in \Rpos$ and $\bt = \bgamma \in \hzn(\ker \oX)$: then $\om(\bmeta \mid \by, \oX) = S(\alpha \mid \by, \oX)$;
            \item $\alpha \in \Rpos$ and $\bt = \bgamma \in \hzn(\Rp) \setminus \hzn(\ker \oX)$: then $\om(\bmeta \mid \by, \oX) = -\infty$ while $S(\alpha \mid \by, \oX)$ is finite, so that \Cref{eq:limsup_below_S} is not enough.
        \end{enumerate}
        In the third position, there exists $k_0$ such that $\bmeta_k \not\in \Rpos \times \hzn(\ker \oX)$ for every $k > k_0$, since $\hzn(\Rp) \setminus \hzn(\ker \oX)$ is open relative to $\hzn(\Rp)$.
        For $k > k_0$, the point $\bmeta_k$ therefore lies in the second of the two regions, so that $\om(\bmeta_k \mid \by, \oX) = -\infty$,
        which completes the proof.
    \end{lproof}

    \begin{lproof}[Proof of \Cref{lem:integrable_boundary}]
        Let $\Y := \rbr{\ln Y_1, \ldots, \ln Y_l}^\tr$ and recall the null set $\cN_\infty$ of \Cref{eq:null_set}, which satisfies $\pr_{\bmeta_\dagger}\sbr{(\bY, \mX) \in \cN_\infty} = 0$ by part~(i) of \Cref{lem:design_regularity}.

        Let $\bmeta\in \overline\Eta\setminus\Eta$. If $\alpha\in\{0,\infty\}$, then $\om(\bmeta\mid \bY,\mX)=-\infty$ almost surely by definition, and hence
        \[
        \ex_{\bmeta_\dagger}\sbr{\om(\bmeta\mid \bY,\mX)}=-\infty.
        \]
        Now let $\bmeta=(\alpha,\bgamma)$ with $\alpha\in\Rpos$ and $\bgamma\in \hzn(\Rp)$. Then
        \[
            \om(\bmeta\mid \bY,\mX) =
            \begin{dcases}
                \sup_{\bb \in \Rp} \tm((\alpha, \bb) \mid \bY, \mX), & \text{ if } \cbr{\bX_1, \ldots, \bX_l} \subseteq \cM_\bgamma,\\
                -\infty, & \text{ if } \cbr{\bX_1, \ldots, \bX_l} \not \subseteq \cM_\bgamma.
            \end{dcases}
        \]
        From \Cref{cond:identifiability} we have $\pr_{\bmeta_\dagger}(\bX_i \in \cM_\bgamma) < 1$, and therefore
        \[
        \pr_{\bmeta_\dagger}\sbr{\cbr{\bX_1, \ldots, \bX_l} \subseteq \cM_\bgamma}
        =
        \pr_{\bmeta_\dagger}\sbr{\bX_1 \in \cM_\bgamma}^l < 1.
        \]
        Hence
        \[
        \pr_{\bmeta_\dagger}\sbr{\om(\bmeta\mid \bY,\mX)=-\infty}>0,
        \]
        so that the negative part of $\om(\bmeta\mid \bY,\mX)$ has infinite expectation. It remains to show that its positive part is integrable; without that, the expectation would be of the form $\infty - \infty$ and would not be defined. By \Cref{eq:objective_function,eq:g_bounds}, the block average is uniformly bounded in $\bb$,
        \[
            \tm((\alpha, \bb) \mid \bY, \mX)
            \leq
            \ln \alpha - 1
            \mtext{for every} \bb \in \Rp,
        \]
        It follows that
        $\ex_{\bmeta_\dagger}\sbr{\rbr{\om(\bmeta\mid \bY,\mX)}_+} < \infty$, whatever the distribution of $\bX$. The extension $\om(\bmeta \mid \bY, \mX)$ is thus quasi-integrable, and its expectation under $\bmeta_{\dagger}$ is $-\infty$.
    \end{lproof}

    \begin{lproof}[Proof of \Cref{lem:measurability}]
        Throughout, $(\by, \oX)$ ranges over $\Rpos^l \times \reals^{l \times p}$, and $\bx_1^\tr, \ldots, \bx_l^\tr$ denote the rows of $\oX$. Two facts are used repeatedly.

        First, by \Cref{eq:objective_function,eq:l_objective_function},
        \begin{equation}
            \label{eq:tilde_m_continuous}
            \tm\rbr{(\alpha, \bb) \mid \by, \oX}
            =
            \frac1l \sum_{i=1}^l \sbr{\ln \alpha + g\rbr{\alpha \rbr{\bb^\tr \bx_i - \ln y_i}}}
        \end{equation}
        is a continuous, real-valued function of $\rbr{(\alpha, \bb), \by, \oX}$ on $\Eta \times \Rpos^l \times \reals^{l \times p}$. Note that $\Eta$ is an open subset of $\overline\Eta$ and that the topology it inherits is the usual one of $\Rpos \times \Rp$.

        Second, for a horizon point $\bgamma = \dir \bbeta$ with $\bbeta$ in the Euclidean unit sphere $\sphere := \cbr{\bb \in \Rp : \norm{\bb} = 1}$,
        \begin{equation}
            \label{eq:hyperplane_as_kernel}
            \cbr{\bx_1, \ldots, \bx_l} \subseteq \cM_\bgamma
            \iff
            \bbeta^\tr \bx_i = 0 \text{ for } i = 1, \ldots, l
            \iff
            \oX \bbeta = \0 .
        \end{equation}
        Every horizon point has exactly one such representative $\bbeta$, and $\bgamma \mapsto \bbeta$ is a homeomorphism of $\hzn(\Rp)$ onto $\sphere$: in the hemispherical model of \Cref{subsec:compactification} it is the identification of the rim of $H_p$ with $\sphere$. We use it to read $\hzn(\Rp)$ as $\sphere$ without further comment.

        \emph{Step 1: two reductions.}
        If $A = \bigcup_{j \in J} A_j$ with $J$ finite or countable, then
        \[
            \sup_{\bmeta \in A} \om(\bmeta \mid \by, \oX)
            =
            \sup_{j \in J} \; \sup_{\bmeta \in A_j} \om(\bmeta \mid \by, \oX),
        \]
        and a countable supremum of Borel maps with values in $[-\infty, +\infty]$ is Borel. It is therefore enough to prove the measurability of the suprema over the pieces $A_j$, which need not be open. Moreover, $\overline\Eta$ is the disjoint union
        \begin{equation}
            \label{eq:three_regions}
            \overline\Eta
            =
            \Eta
            \; \cup \;
            \rbr{\cbr{0, \infty} \times \csm(\Rp)}
            \; \cup \;
            \rbr{\Rpos \times \hzn(\Rp)},
        \end{equation}
        the three regions being those on which the three branches of $\om$ in \Cref{lem:USC} apply.

        \emph{Step 2: suprema over subsets of $\Eta$.}
        We claim that
        \begin{equation}
            \label{eq:sup_over_Eta_subset}
            (\by, \oX) \mapsto \sup_{\bmeta \in A} \tm(\bmeta \mid \by, \oX)
            \mtext{is Borel for every} A \subseteq \Eta .
        \end{equation}
        Indeed, $\Eta$ is a separable metric space, so $A$ has a countable dense subset $D$. By the continuity in \Cref{eq:tilde_m_continuous}, $\sup_{\bmeta \in A} \tm(\bmeta \mid \by, \oX) = \sup_{\bmeta \in D} \tm(\bmeta \mid \by, \oX)$ for every $(\by, \oX)$, and each of the maps $(\by, \oX) \mapsto \tm(\bmeta \mid \by, \oX)$, $\bmeta \in D$, is continuous. Step~1 applies.

        \emph{Step 3: unions of kernels over a compact set of directions.}
        For $C \subseteq \sphere$, put
        \[
            M_C := \cbr{\oX \in \reals^{l \times p} : \oX \bbeta = \0 \text{ for some } \bbeta \in C} .
        \]
        We claim that $M_C$ is closed whenever $C$ is compact. Let $\oX_k \in M_C$ with $\oX_k \to \oX$, and pick $\bbeta_k \in C$ with $\oX_k \bbeta_k = \0$. By compactness there is a subsequence $\bbeta_{k_j} \to \bbeta \in C$, and then $\oX \bbeta = \lim_{j \to \infty} \oX_{k_j} \bbeta_{k_j} = \0$ by continuity of $(\oX, \bbeta) \mapsto \oX \bbeta$. Hence $\oX \in M_C$.

        \emph{Step 4: proof of part~(i).}
        Fix $\bmeta \in \overline\Eta$ and distinguish the three regions in \Cref{eq:three_regions}. If $\bmeta \in \Eta$, then $\om(\bmeta \mid \cdot, \cdot) = \tm(\bmeta \mid \cdot, \cdot)$ is continuous by \Cref{eq:tilde_m_continuous}. If $\bmeta \in \cbr{0, \infty} \times \csm(\Rp)$, then $\om(\bmeta \mid \cdot, \cdot) \equiv -\infty$. Finally, let $\bmeta = (\alpha, \bgamma)$ with $\alpha \in \Rpos$ and $\bgamma \in \hzn(\Rp)$, represented by $\bbeta \in \sphere$. By \Cref{eq:hyperplane_as_kernel},
        \[
            \om(\bmeta \mid \by, \oX)
            =
            \begin{dcases}
                \sup_{\bb \in \Rp} \tm\rbr{(\alpha, \bb) \mid \by, \oX},
                & \text{if } \oX \in M_{\cbr{\bbeta}},\\
                -\infty,
                & \text{otherwise.}
            \end{dcases}
        \]
        The supremum is Borel in $(\by, \oX)$ by \Cref{eq:sup_over_Eta_subset} applied to $A = \cbr{\alpha} \times \Rp$, and $M_{\cbr{\bbeta}}$ is closed by Step~3. The map is thus Borel, being equal to a Borel map on the Borel set $\Rpos^l \times M_{\cbr{\bbeta}}$ and constant on its complement.

        \emph{Step 5: proof of part~(ii), the two easy regions.}
        Let $U \subseteq \overline\Eta$ be open. Splitting $U$ along \Cref{eq:three_regions} and invoking Step~1, it suffices to treat the three traces separately. On $U \cap \Eta$ the extension coincides with $\tm$, so \Cref{eq:sup_over_Eta_subset} applies. On $U \cap \rbr{\cbr{0, \infty} \times \csm(\Rp)}$ the extension is identically $-\infty$, so the supremum is the constant $-\infty$, whether or not that trace is empty. What remains is
        \[
            \Uhzn := U \cap \rbr{\Rpos \times \hzn(\Rp)},
        \]
        a relatively open subset of $\Rpos \times \sphere$.

        \emph{Step 6: reduction of $\Uhzn$ to countably many boxes.}
        Fix a countable dense subset $\cbr{\bv_1, \bv_2, \ldots}$ of $\sphere$ and let $\cF$ be the countable family of all sets of the form
        \[
            (a, b) \times \cbr{\bbeta \in \sphere : \norm{\bbeta - \bv_j} \leq r},
            \qquad
            0 < a < b, \; r > 0 \text{ rational}, \; j \in \naturals .
        \]
        Each member of $\cF$ is a product $I \times C$ of an open interval $I \subseteq \Rpos$ and a compact set $C \subseteq \sphere$. We claim that $\Uhzn$ is the union of those members of $\cF$ that are contained in $\Uhzn$; by Step~1 it then suffices to prove the measurability of $\sup_{\bmeta \in I \times C} \om(\bmeta \mid \by, \oX)$ for a single such box.

        Let $(\alpha, \bbeta) \in \Uhzn$. Since $\Uhzn$ is relatively open in the product $\Rpos \times \sphere$, there is $\eps \in (0, \alpha)$ with
        \[
            \cbr{\alpha' \in \Rpos : \abr{\alpha' - \alpha} < \eps}
            \times
            \cbr{\bbeta' \in \sphere : \norm{\bbeta' - \bbeta} < \eps}
            \subseteq \Uhzn .
        \]
        Choose rationals $a, b$ with $\alpha - \eps < a < \alpha < b < \alpha + \eps$, a rational $r \in (0, \eps/2)$ and an index $j$ with $\norm{\bv_j - \bbeta} < r$. The corresponding member of $\cF$ contains $(\alpha, \bbeta)$, and it is contained in $\Uhzn$, because $\norm{\bbeta' - \bv_j} \leq r$ implies $\norm{\bbeta' - \bbeta} \leq 2r < \eps$.

        \emph{Step 7: the supremum over a box.}
        Let $I \times C \in \cF$ and set
        \[
            T_I(\by, \oX) := \sup_{(\alpha, \bb) \in I \times \Rp} \tm\rbr{(\alpha, \bb) \mid \by, \oX} .
        \]
        Let $(\alpha, \bgamma)$ be a point of $I \times C$ and let $\bbeta \in \sphere$ represent $\bgamma$. By \Cref{eq:hyperplane_as_kernel}, the value $\om\rbr{(\alpha, \bgamma) \mid \by, \oX}$ equals $\sup_{\bb \in \Rp} \tm\rbr{(\alpha, \bb) \mid \by, \oX}$ if $\oX \bbeta = \0$, and $-\infty$ otherwise; crucially, in the first case the value does not depend on $\bgamma$. Hence, if no $\bbeta \in C$ annihilates $\oX$, every term of the supremum is $-\infty$, while if at least one does, the supremum is unchanged if $\bgamma$ is dropped from the index set altogether:
        \[
            \sup_{\bmeta \in I \times C} \om(\bmeta \mid \by, \oX)
            =
            \begin{dcases}
                T_I(\by, \oX), & \text{if } \oX \in M_C,\\
                -\infty, & \text{otherwise.}
            \end{dcases}
        \]
        By \Cref{eq:sup_over_Eta_subset} applied to $A = I \times \Rp$, the map $T_I$ is Borel, and $M_C$ is closed by Step~3. As in Step~4, the map is therefore Borel, which completes the proof.
    \end{lproof}

    \begin{lproof}[Proof of \Cref{lem:integrability}]
        Throughout, we work on the event $\cbr{(\bY, \mX) \notin \cN_\infty}$, which has probability one by \Cref{eq:null_set_is_null}, so that \Cref{lem:USC} and the bounds of \Cref{lem:profile} apply to the realization at hand.

        Write $\Y := \rbr{\ln Y_1, \ldots, \ln Y_l}^\tr$ for the vector of log-responses, $\bY$ and $\mX$ being as in \Cref{eq:block_notation}, and put
        \[
            R := \inf_{\bb \in \Rp} \norm{\Y - \mX \bb}_2
        \]
        for the Euclidean distance from $\Y$ to the column space of $\mX$. On the event we work on, $\Y \notin \operatorname{col}(\mX)$ and hence $R > 0$.

        \emph{Step 1: a bound in terms of $R$.}
        We have
        \begin{align*}
            \sup_{\bmeta \in \overline\Eta} \om(\bmeta\mid \bY,\mX)
            & \leq \sup_{\alpha \in \Rpos} S(\alpha \mid \bY, \mX) \\
            & \leq \sup_{\alpha \in \Rpos}\rbr{\ln\alpha - \frac\alpha{l} \inf_{\bb \in \Rp}\norm{\Y - \mX \bb}_1} \tag{$*$}\\
            & \leq \sup_{\alpha \in \Rpos}\rbr{\ln\alpha - \frac{\alpha R}{l}} , \tag{$**$}
        \end{align*}
        the first step by \Cref{eq:om_below_S}, the endpoint values $S(0 \mid \bY, \mX) = S(\infty \mid \bY, \mX) = -\infty$ leaving only $\alpha \in \Rpos$; the second by part~(iii) of \Cref{lem:profile} and \Cref{eq:distance}; and the third because $\norm{\,\cdot\,}_1 \geq \norm{\,\cdot\,}_2$. The infima over $\bb$ in $(*)$ and $(**)$ correspond to the minimization problem with respect to the $L_1$ and $L_2$ norms, respectively. Since $R > 0$, the supremum in $(**)$ is attained at the finite value $\alpha_{\text{opt}} = l / R$, so that
        \begin{equation}
            \label{eq:sup_below_R}
            \sup_{\bmeta \in \overline\Eta} \om(\bmeta \mid \bY, \mX) \leq \ln l - 1 - \ln R .
        \end{equation}

        \emph{Step 2: a lower bound for $R$.}
        Under \Cref{eq:model}, the variables $G_i := \ad\rbr{\ln Y_i - \bd^\tr \bX_i}$, for $i = 1, \ldots, l$, are iid standard Gumbel and independent of $\mX$: the conditional law of $\ad \ln\rbr{Y_i / \sigma(\bX_i)}$ given $\bX_i = \bx$ is $\Gumbel(0,1)$ whatever $\bx$ is. Collecting them in $\bG = (G_1, \ldots, G_l)^\tr$, we have $\Y = \mX \bd + \ad^{-1} \bG$.

        Fix a realization $\oX$ of $\mX$. The solution to the $L_2$ minimization problem is the orthogonal projection of $\Y$ onto the orthogonal complement of the column space of $\oX$, denoted $\operatorname{col}(\oX)^\perp$, and represented in matrix form by the $l \times l$ projection matrix $\mQ_{\oX}$. Since $\Y = \oX \bd + \ad^{-1} \bG$, we have $\mQ_{\oX} \Y = \ad^{-1} \mQ_{\oX} \bG$, and therefore $R = \norm{\mQ_{\oX} \Y}_2 = \|\ad^{-1} \mQ_{\oX} \bG\|_2$. The projection matrix $\mQ_{\oX}$ can be represented as $\mO \mLambda \mO^\tr$, for $\mO$ orthogonal and $\mLambda$ a diagonal matrix containing the eigenvalues of $\mQ_{\oX}$, which are $\lambda_1 = \ldots = \lambda_{l - \rank(\oX)} = 1$ and $\lambda_j = 0$ for all $j > l - \rank(\oX)$; note that $l - \rank(\oX) \geq l - p \geq 1$, since $l \geq p+1$. Let $\bW := \mO^\tr \bG$. Then
        \begin{equation*}
            \norm{\ad^{-1} \mQ_{\oX} \bG}^2_2
            = \ad^{-2} \bG^\tr \mQ_{\oX} \bG
            = \ad^{-2} \bW^\tr \mLambda \bW
            = \ad^{-2} \sum_{i=1}^{l - \rank(\oX)} W_i^2
            \geq \ad^{-2} W_1^2 ,
        \end{equation*}
        the first equality because $\mQ_{\oX}^\tr \mQ_{\oX} = \mQ_{\oX}$ and the second because $\mQ_{\oX} = \mO \mLambda \mO^\tr$. The first row of $\mO^\tr$ is the transpose of a unit vector $\bw \in \sphere[l-1]$, so that $W_1 = \bw^\tr \bG$ and
        \begin{equation}
            \label{eq:R_below_projection}
            R^2 \geq \ad^{-2} \abr{\bw^\tr \bG}^2 ,
            \mtext{so that} R \geq \ad^{-1} \abr{\bw^\tr \bG} .
        \end{equation}
        Both $\mO$ and $\bw$ depend on the realization $\oX$, but the bound of Step~3 below is uniform in all unit vectors in $\sphere[l-1]$.

        \emph{Step 3: a bound uniform in the direction.} Let $\bv \in \sphere[l-1]$ be arbitrary. We claim that
        \begin{equation}
            \label{eq:log_projection_integrable}
            \ex\sbr{\rbr{- \ln \abr{\langle \bv, \bG \rangle}}_+} \leq 2 K ,
            \mtext{where} K := l \, \norm{f^{G}}_\infty ,
        \end{equation}
        with $f^{G}$ the standard Gumbel density and $\norm{\,\cdot\,}_\infty$ the supremum norm. Choose $i \in \cbr{1,\ldots,l}$ with $\abr{v_i} = \max_j \abr{v_j}$ and define $\tilde U_i := \sum_{j \neq i} {v_j G_j}$, independent of $G_i$. For the Gumbel distribution, $\norm{f^{a G_i}}_\infty = \abr{a}^{-1}\norm{f^{G}}_\infty < \infty$ for $a \neq 0$, and $\sum_{j=1}^l v_j^2 = 1$ implies $\max_j \abr{v_j} \geq l^{-1/2} \geq l^{-1}$,
        from which we derive that $f^{\langle \bv, \bG \rangle}$ is uniformly bounded, since
        \begin{align*}
            f^{\langle \bv, \bG \rangle}(z)  = \rbr{f^{v_i G_i} * f^{\tilde U_i}} (z)
            & = \int_\reals f^{v_i G_i}(t) f^{\tilde U_i}(z - t) \,dt \\
            & \leq \norm{f^{v_i G_i}}_\infty
            \int_\reals f^{\tilde U_i}(z - t) \,dt
            = \frac{\norm{f^{G}}_\infty}{\abr{v_i}} \cdot 1
            \leq l \, \norm{f^{ G}}_\infty = K .
        \end{align*}
        Therefore
        \[
            \ex\sbr{\rbr{- \ln \abr{\langle \bv, \bG \rangle}}_+}
            =
            - \int_{-1}^1 \ln \abr{z} \, f^{\langle \bv, \bG \rangle}(z) \,dz
            \leq
            - K \int_{-1}^1 \ln \abr{z} \,dz
            =
            2 K ,
        \]
        which is \Cref{eq:log_projection_integrable}.

        \emph{Step 4: positive part and expectation.}
        By \Cref{eq:sup_below_R} and $(a+b)_+ \leq a_+ + b_+$,
        \[
            \rbr{\sup_{\bmeta \in \overline\Eta} \om(\bmeta \mid \bY, \mX)}_+
            \leq
            \rbr{\ln l - 1}_+ + \rbr{- \ln R}_+ ,
        \]
        an inequality between random variables, the left-hand side of which is measurable by part~(ii) of \Cref{lem:measurability}. It remains to bound the expectation of the second term. Since $\bG$ is independent of $\mX$ and $R$ is a measurable function of $(\mX, \bG)$, conditioning on $\mX$ fixes the realization $\oX$, and \Cref{eq:R_below_projection,eq:log_projection_integrable} applied at the unit vector $\bw$ belonging to that realization give
        \[
            \ex\sbr{\rbr{- \ln R}_+ \mid \mX = \oX}
            \leq
            \rbr{\ln \ad}_+ + \ex\sbr{\rbr{- \ln \abr{\langle \bw, \bG \rangle}}_+}
            \leq
            \rbr{\ln \ad}_+ + 2K
        \]
        for almost every $\oX$. Taking expectations yields \Cref{eq:wald_integrability_frechet}, since
        \[
            \ex_{\bmeta_\dagger}\sbr{\rbr{\sup_{\bmeta \in \overline\Eta} \om(\bmeta \mid \bY, \mX)}_+}
            \leq
            \rbr{\ln l - 1}_+ + \rbr{\ln \ad}_+ + 2K
            <
            \infty. \qedhere
        \]
    \end{lproof}

    \begin{proof}[Proof of \Cref{thm:consistency}]
        We apply \Cref{thm:wald_blocks} with $\cZ = \Rpos \times \Rp$ and $Z = (Y, \bX)$, with $\sTheta = \Eta$ and $\Theta = \overline\Eta$ metrized by the $d_\infty$ of \Cref{subsec:compactification}, and with the one-observation objective function $m$ of \Cref{eq:objective_function}. A block $\bz^{(l)} \in \cZ^l$ is the pair $(\by, \oX) \in \Rpos^l \times \reals^{l \times p}$ of \Cref{lem:USC}, up to the rearrangement of the coordinates, which is bimeasurable and which we leave implicit.

        Put $k := p+1$. Every $l \in \cbr{k, \ldots, 2k-1}$ then satisfies $l \geq p+1$, so that \Cref{lem:USC} supplies the extension $\om$ of the block average $\tm$ to $\overline\Eta$ at all of those block sizes at once, and not only at one of them.
        That $\bz^{(l)} \mapsto \om\rbr{\bmeta \mid \bz^{(l)}}$ is measurable for every fixed $\bmeta \in \overline\Eta$ is part~(i) of \Cref{lem:measurability}. Hypothesis~(i) of \Cref{thm:wald_blocks} holds by the definition of $\om$, which on $\Eta$ is the block average itself. Hypothesis~(ii) is \Cref{lem:USC} for the upper-semicontinuity, and \Cref{lem:measurability,lem:integrability} for items~(a) and~(b) of \Cref{cond:wald_integrability}. \Cref{lem:USC} is a deterministic statement about a realization with $\y \notin \operatorname{col}(\oX)$; since $l \geq p+1$, \Cref{eq:null_set_is_null} makes that hypothesis hold outside the null set $\cN_\infty$ of \Cref{eq:null_set}, which is the null set that \Cref{def:upper_semicontinuity} allows --- one and the same for every $\bmeta \in \overline\Eta$. Hypothesis~(iii) is \Cref{lem:integrable_boundary}, applied at the block size $k$; \Cref{cond:identifiability}, which that lemma requires, is assumed. Finally, the set $\mTheta$ of \Cref{eq:Theta_0_blocks} is here the set $\Eta_0$ of \Cref{eq:Eta_0}, which \Cref{lem:unique_maximizer} identifies as the singleton $\cbr{\bmeta_\dagger}$; it is in particular nonempty, and the near-maximizer hypothesis of \Cref{thm:wald_blocks} at $\theta_0 = \bmeta_\dagger$ is the hypothesis of \Cref{thm:consistency}.

        The space $(\overline\Eta, d_\infty)$ is compact by \Cref{subsec:compactification}, so that $K := \overline\Eta$ is an admissible choice in \Cref{thm:wald_blocks}.
        Since $\nme$ takes its values in $\Eta \subseteq K$, the event $\cbr{\nme \in K}$ is sure, and since $\mTheta$ is a singleton, $d_\infty\rbr{\nme, \mTheta} = d_\infty\rbr{\nme, \bmeta_\dagger}$. \Cref{thm:wald_blocks} therefore yields
        \[
            \pr\sbr{d_\infty\rbr{\nme, \bmeta_\dagger} \geq \eps} \to 0
            \mtext{as} n \to \infty, \mtext{for every} \eps > 0 ,
        \]
        which is consistency in the metric of the compactification. As $\bmeta_\dagger \in \Eta$ and $\nme$ is $\Eta$-valued, \Cref{eq:metrics_agree} turns this into consistency in the Euclidean metric, $\nme \pto \bmeta_\dagger$.
    \end{proof}

\section{\texorpdfstring{Proofs for \Cref{sec:asymptotic_normality}}{Proofs for Section 5, Asymptotic Normality}}
\label{sec:proofs_normality}

    \begin{proof}[Proof of \Cref{thm:normality}]
Theorem~5.41 in \citet{van2000asymptotic} concerns estimators that are zeros of the score, and the CMLE is one. For every $n \geq p+1$, it maximizes $M_n$ over the open set $\Eta$ almost surely, as recalled in \Cref{sec:asymptotic_normality}, and $M_n$ is differentiable, so that $\sum_{i=1}^n \nabla_\bmeta m(\mle \mid Y_i, \bX_i) = \0$ almost surely. Neither an event of probability zero nor the finitely many sample sizes $n < p+1$ have any bearing on a limit statement in probability. It therefore suffices to check the regularity conditions of that theorem, which are:
        \begin{enumerate}[label=\roman*.]
            \item $m(\bmeta \mid y, \bx)$ is three times continuously differentiable in $\bmeta$ for every $(y, \bx)$;
            \item $\ex[\nabla_\bmeta m(\bmeta_\dagger \mid Y, \bX)] = 0$;
            \item $\ex[\|\nabla_\bmeta m(\bmeta_\dagger \mid Y, \bX)\|^2] < \infty$;
            \item The expected Hessian matrix, $\Hes = \ex\sbr{\nabla_\bmeta^2 m(\bmeta_\dagger \mid Y, \bX)}$, exists and is nonsingular;
            \item For all $\bmeta$ in a neighborhood of $\bmeta_\dagger$, the elements of $\nabla_\bmeta^3 m(\bmeta \mid y, \bx)$ are dominated by a fixed integrable function $\dddot M(y, \bx)$.
        \end{enumerate}

        Define $r_\bbeta(\bx, y) = \bbeta^\tr \bx - \ln y$. Then
        \[m(\bmeta \mid y, \bx) = \ln \alpha + g(\alpha \; r_\bbeta(\bx, y)),\]
        with $g(t) = t - \exp(t)$.
        For every fixed $(y, \bx)\in \Rpos \times \Rp$, the map $\bmeta \mapsto m(\bmeta \mid  y,\bx)$ is infinitely differentiable, since $\ln$ and $g$ are smooth on $\Rpos$ and $\reals$, respectively, and the map $(\alpha,\bbeta)\mapsto \alpha \;r_\bbeta(\bx, y)$ is a polynomial. Hence, \textbf{item i} is satisfied.

        For $\bx \in \Rp$, write $\sigma_\dagger(\bx) := \sigma_{\bd}(\bx) = \exp(\bd^\tr \bx)$ for the scale function at the true parameter and consider the pair of transformations
        \begin{align}
            \label{eq:barz}
            \bar{z}(y, \bx)
            &= \rbr{\frac{y}{\sigma_\dagger(\bx)}}^{-\ad},
            & y \in \Rpos, \\
            \label{eq:bary}
            \bar{y}(z, \bx)
            &= \sigma_\dagger(\bx) \;z^{-1/\ad},
            & z \in \Rpos,
        \end{align}
        which are each other's inverse: $\bar{y}(\bar{z}(y,\bx),\bx) = y$ and $\bar{z}(\bar{y}(z,\bx),\bx) = z$.
        Note that
        \[
            \exp\rbr{\ad r_{\bd}(\bx, y)}
            = \rbr{\frac{y}{\sigma_{\dagger}(\bx)}}^{-\ad} = \bar{z}(y,\bx)
            \mtext{and}
            r_{\bd}(\bx, y)
            = \ad^{-1} \ln \bar{z}(y,\bx).
        \]
        Further,
        \[
            r_{\bbeta}(\bx, y)
            = \Delta_{\bbeta}^\tr \,\bx + r_{\bd}(\bx, y)
            = \Delta_{\bbeta}^\tr \,\bx + \ad^{-1} \ln \bar{z}(y,\bx)
            \mtext{with}
            \Delta_{\bbeta} = \bbeta - \bd.
        \]
        Define
        \begin{align}
            \notag
            q(\bmeta \mid z,\bx)
            &= m(\bmeta \mid \bar{y}(z,\bx), \bx) \\
            \notag
            &= \ln \alpha
            - \ln \sigma_\dagger(\bx)
            + \frac{1}{\ad}\ln z
            + g\rbr{\alpha \Delta_{\bbeta}^\tr \,\bx
            + \frac{\alpha}{\ad} \ln z} \\
            \label{eq:qbmeta}
            &= \ln \alpha
            - \bd^\tr \bx
            + \alpha \Delta_{\bbeta}^\tr \,\bx
            + \frac{1 + \alpha}{\ad} \ln z
            - \exp\rbr{\alpha \Delta_{\bbeta}^\tr \,\bx} z^{\alpha/\ad}.
        \end{align}
        Since $g'(t) = 1 - e^t$ and $ g''(t) = - e^t$, we have
        \begin{equation*}
            \nabla_{\bmeta} \,m(\bmeta \mid y, \bx) =
            \begin{pmatrix}
                \partial_{\alpha} m(\bmeta \mid y, \bx)\\
                \nabla_{\bbeta} m(\bmeta \mid y, \bx)
            \end{pmatrix} =
            \begin{pmatrix}
                \alpha^{-1} + r_\bbeta(\bx, y)\; g'(\alpha \;r_\bbeta(\bx, y)) \\
                \alpha \bx\;  g'(\alpha\;r_\bbeta(\bx, y))
            \end{pmatrix}.
        \end{equation*}
        As, moreover,
        \begin{align*}
            r_\bbeta(\bx, \bar{y}(z,\bx))
            &= \Delta_{\bbeta}^\tr \,\bx
            + \ad^{-1} \ln \bar{z}(\bar{y}(z,\bx),\bx) \\
            &= \Delta_{\bbeta}^\tr \,\bx
            + \ad^{-1} \ln z,
        \end{align*}
        we find
        \begin{align}
            \notag
            \nabla_{\bmeta} \, q(\bmeta \mid z, \bx)
            &=
            \nabla_{\bmeta} \, m(\bmeta \mid \bar{y}(z,\bx),\bx) \\
            \label{eq:score_general}
            &=
            \begin{pmatrix}
                \alpha^{-1} + \rbr{\Delta_{\bbeta}^\tr \,\bx + \frac{1}{\ad} \ln z} \, g'\rbr{\alpha \Delta_{\bbeta}^\tr \,\bx + \frac{\alpha}{\ad} \ln z} \\
                \alpha \bx \, g'\rbr{\alpha \Delta_{\bbeta}^\tr \,\bx + \frac{\alpha}{\ad} \ln z}
            \end{pmatrix}.
        \end{align}
        At the true parameter $\bmeta_{\dagger}$, \Cref{eq:score_general} simplifies to
        \begin{equation}
            \label{eq:log_likelihood_reparametrization}
            \nabla_{\bmeta} \, q(\bmeta_{\dagger} \mid z, \bx)
            =
            \begin{pmatrix}
                \ad^{-1} + \ad^{-1} (\ln z) \rbr{1-z} \\
                \ad \bx \rbr{1-z}
            \end{pmatrix}.
        \end{equation}

        Under the conditional model in \Cref{eq:model}, the random variable $Z := \bar{z}(Y, \bX)$ has a unit-exponential distribution and is independent of $\bX$. From $Y = \bar{y}(Z, \bX)$, we find
        \begin{align*}
            \ex\sbr{\nabla_{\bmeta} \,m(\bmeta_\dagger \mid Y, \bX)}
            &= \ex\sbr{\nabla_{\bmeta} \,m(\bmeta_\dagger \mid \bar{y}(Z, \bX), \bX)} \\
            &= \ex\sbr{\nabla_{\bmeta} \,q(\bmeta_\dagger \mid Z, \bX)} \\
            & =
            \ex \begin{pmatrix}
                \ad^{-1} + \ad^{-1} (\ln Z) \rbr{1- Z} \\
                \ad \bX \rbr{1- Z}
            \end{pmatrix}
            = \0,
        \end{align*}
        where we used the identities $\ex\sbr{Z \ln Z} = 1 - \gamma$ and $\ex[\ln Z] = - \gamma$, with $\gamma$ the Euler--Mascheroni constant. We have thus verified \textbf{item ii}.

        \emph{Items iii and iv.} The two concern different matrices. \textbf{Item iii} concerns the second moments of the score, which are the entries of the Fisher information matrix $\Fi$ below, so that it is the finiteness of the trace of $\Fi$. \textbf{Item iv} concerns the expected Hessian $\Hes$, the matrix $P \dot\psi_{\theta_0}$ of Theorem~5.41 in \citet{van2000asymptotic}, whose $\psi_\theta$ is our score $\nabla_\bmeta m$. The following result settles both, and shows that the two matrices agree up to sign.

        \begin{lemma}[Fisher information and expected Hessian]
            \label{lem:fisher_information_matrix}
            Assume that $\ex\sbr{\norm{\bX}^2} < \infty$. Then the Fisher information matrix and the expected Hessian matrix associated with the parametric part of the model given by \Cref{eq:model},
            \[
                \Fi = \ex\sbr{\nabla_\bmeta m(\bmeta_\dagger \mid Y, \bX) \, \nabla_\bmeta m(\bmeta_\dagger \mid Y, \bX)^\tr}
                \mtext{and}
                \Hes = \ex\sbr{\nabla_\bmeta^2 m(\bmeta_\dagger \mid Y, \bX)},
            \]
            are well defined, and $\Hes = -\Fi$ with
            \[
            \Fi =
            \begin{pmatrix}
                \dfrac{1}{\ad^2}
                \left(\dfrac{\pi^2}{6} + (1-\gamma)^2 \right)
                & (1 - \gamma)\ex[\bX]^\tr \\[2ex]
                (1 - \gamma)\ex[\bX] & \ad^2 \ex[\bX \bX^\tr]
            \end{pmatrix} .
            \]
            If in addition \Cref{cond:identifiability} is satisfied, then $\Fi$ is positive definite and $\Hes$ is negative definite; in particular, both are nonsingular.
        \end{lemma}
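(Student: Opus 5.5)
We will work throughout in the transformed variables of \Cref{eq:barz,eq:bary}. There $Z := \bar z(Y, \bX)$ is unit exponential and independent of $\bX$, and $\nabla_\bmeta m(\bmeta_\dagger \mid Y, \bX)$ equals the right-hand side of \Cref{eq:log_likelihood_reparametrization} evaluated at $(Z, \bX)$. Write $L := \ln Z$.

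\emph{Computing $\Fi$.} The product of the score with itself is a block matrix. The top-left entry is $\ad^{-2}\ex[(1 + L(1-Z))^2]$. The off-diagonal block is $\ex[(1 + L(1-Z))(1-Z)]\,\ex[\bX]$, the $\ad$ factors cancelling. The bottom-right block is $\ad^2 \ex[(1-Z)^2]\,\ex[\bX\bX^\tr]$. Independence of $Z$ and $\bX$ factorizes each expectation, and $\ex\norm{\bX}^2 < \infty$ gives finiteness. All remaining moments of $Z$ are finite, since $\ln Z$ has moments of every order under the exponential law. The exponential moments $\ex[Z^a L^b]$ come from derivatives of $\Gamma$ at $a+1$. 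We have $\ex[1-Z] = 0$ and $\ex[(1-Z)^2] = 1$. Next, $\ex[L(1-Z)] = -\gamma - (1-\gamma) = -1$. Hence $\ex[(1 + L(1-Z))(1-Z)] = -\ex[L(1-Z)^2]$, which, using $\ex[L Z^2] = 2(\tfrac32 - \gamma)$, gives $-(-\gamma - 2(1-\gamma) + 3 - 2\gamma) = -(1 - \gamma)$. The top-left entry is $1 + 2\ex[L(1-Z)] + \ex[L^2(1-Z)^2]$, which evaluates, using $\Gamma''$ at $1$, $2$ and $3$, to $\pi^2/6 + (1-\gamma)^2$. The sign of the off-diagonal block needs care: I will double-check the $\ex[L Z^2]$ value against the stated $+(1-\gamma)$, since the sign also depends on the score convention.

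\emph{Computing $\Hes$.} Differentiate \Cref{eq:score_general} once more and evaluate at $\bmeta_\dagger$, where $g'(L) = 1 - Z$ and $g''(L) = -Z$. This gives $\partial_\alpha^2 m = -\ad^{-2} - \ad^{-2} L^2 Z$, $\partial_\alpha \nabla_\bbeta m = \bx(1-Z) - \bx L Z$ and $\nabla_\bbeta^2 m = -\ad^2 Z\,\bx\bx^\tr$. Taking expectations with $\ex[Z] = 1$, $\ex[LZ] = 1 - \gamma$ and $\ex[L^2 Z] = \Gamma''(2) = (1-\gamma)^2 + \pi^2/6 - 1$ then gives $-\Fi$ entrywise. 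Integrability of each entry again follows from independence and the second moment of $\bX$. Alternatively, one could invoke the information identity via differentiation under the integral, but direct computation avoids justifying that interchange.

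\emph{Positive definiteness.} For $(s, \ba) \neq \0$, write $v^\tr \Fi v = \ex[(s\,\partial_\alpha m + \ba^\tr \nabla_\bbeta m)^2] \geq 0$. Equality would force $s\ad^{-1}(1 + L(1-Z)) + \ad(1-Z)\,\ba^\tr\bX = 0$ almost surely. Conditioning on $\bX$, this is a function of $Z$ alone; it is continuously distributed in $Z$ and cannot vanish identically unless $s = 0$ and $\ba^\tr\bX = 0$ almost surely. To see this, note that at $Z = 1$ the expression equals $s/\ad$, so $s = 0$, and then $(1-Z)\,\ba^\tr\bX = 0$ forces $\ba^\tr\bX = 0$. \Cref{cond:identifiability} then gives $\ba = \0$. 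The main obstacle is the careful gamma-function bookkeeping; the definiteness argument is straightforward.
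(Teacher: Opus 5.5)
\textbf{Sign error in the off-diagonal block of $\Fi$.} Your computation of the matrices follows the paper's route: change of variables to the unit-exponential $Z$, moments as derivatives of $\Gamma$, and direct differentiation for $\Hes$. However, the off-diagonal block of $\Fi$ contains an algebraic slip that you leave unresolved. As written, the proposal therefore does not establish the stated matrix and is internally inconsistent. Expanding gives $(1+L(1-Z))(1-Z) = (1-Z) + L(1-Z)^2$, so with $\ex[1-Z]=0$ one gets $\ex[(1+L(1-Z))(1-Z)] = +\ex[L(1-Z)^2]$, not $-\ex[L(1-Z)^2]$. Your value $\ex[LZ^2] = \Gamma'(3) = 3-2\gamma$ is correct, and $\ex[L(1-Z)^2] = -\gamma - 2(1-\gamma) + (3-2\gamma) = 1-\gamma$. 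This gives exactly the stated block $(1-\gamma)\ex[\bX]$.

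The ``score convention'' cannot rescue the other sign. $\Fi$ is quadratic in the score, so flipping the score's sign changes nothing; the sign of that entry is fixed by $m$ and the parametrization $(\alpha,\bbeta)$. Your own Hessian computation, which is correct, should have flagged the slip: $\ex[\bX(1-Z-LZ)] = (1-1-(1-\gamma))\ex[\bX] = -(1-\gamma)\ex[\bX]$. With your value of the off-diagonal block of $\Fi$, the two off-diagonal blocks would coincide, and the identity $\Hes=-\Fi$ you assert would fail unless $\ex[\bX]=\0$. Once the sign is corrected, the remaining entries agree with the paper.

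\textbf{Definiteness: a different, valid route.} The paper works from the explicit form of $\Fi$. Under the identifiability condition $\ex[\bX\bX^\tr]\succ 0$, and $M := \ex[\bX]^\tr\ex[\bX\bX^\tr]^{-1}\ex[\bX]\le 1$ because $M-M^2$ is a quadratic form in the covariance matrix of $\bX$. The Schur complement $\ad^{-2}(\pi^2/6+(1-\gamma)^2(1-M))$ is then positive.

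You instead use that $\Fi$ is the second-moment matrix of the score and show that its components are linearly independent in $L^2$. This needs no entry of $\Fi$, so it is immune to bookkeeping errors like the one above, and it needs no separate proof that $\ex[\bX\bX^\tr]\succ0$. The paper's version, in return, yields the explicit Schur complement, which is the reciprocal of the asymptotic variance of $\sqrt n(\hat\alpha_n-\ad)$.

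Make your key step precise. By independence and Fubini, for $P_{\bX}$-almost every $\bx$ the continuous function $z\mapsto s\ad^{-1}(1+(\ln z)(1-z)) + \ad(1-z)\ba^\tr\bx$ vanishes for Lebesgue-almost every $z>0$. By continuity it then vanishes for every $z>0$, and this is what licenses evaluating at $z=1$. ``Continuously distributed'' is not the relevant property.
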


        \noindent \Cref{cond:mgf} supplies the moment condition, so that items~iii and~iv are verified. Finally, the following result is used to verify \textbf{item v}. Recall $\bar{z}(y,\bx)$ in \Cref{eq:barz}.

        \begin{lemma}
            \label{lem:domination_third_derivatives}
            Assume \Cref{cond:mgf} is fulfilled.
            Then there exist constants $0 < r < s < \infty$ and $C, K, \delta \in \Rpos$, depending on $\ad$ only, such that the function $\dddot M$ on $\Rpos \times \Rp$ given by
            \[
                \dddot M(y, \bx)
                =
                C + K \rbr{1 + \norm{\bx}^3 + {\abr{\ln z}}^3} \exp\rbr{\delta \norm{\bx}} \rbr{z^r + z^s}
                \qquad \text{with } z = \bar{z}(y,\bx)
            \]
            uniformly dominates all third-order derivatives of $\bmeta \mapsto m(\bmeta \mid y, \bx)$ in a neighborhood of $\bmeta_\dagger$ and is integrable with respect to the law of $(Y, \bX)$.
        \end{lemma}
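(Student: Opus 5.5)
We work throughout in the coordinates of \Cref{eq:qbmeta}: with $Z = \bar z(Y,\bX)$ unit exponential and independent of $\bX$, we have $m(\bmeta \mid y,\bx) = q(\bmeta \mid \bar z(y,\bx), \bx)$. Every third-order partial derivative in $\bmeta=(\alpha,\bbeta)$ can therefore be computed from the explicit formula
\[
q(\bmeta\mid z,\bx)=\ln\alpha-\bd^\tr\bx+\alpha\Delta_\bbeta^\tr\bx+\tfrac{1+\alpha}{\ad}\ln z-\exp\rbr{\alpha\Delta_\bbeta^\tr\bx}z^{\alpha/\ad}.
\]
The terms $-\bd^\tr\bx$ and $\tfrac{1+\alpha}{\ad}\ln z$ are affine in $\bmeta$, so they drop out. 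The term $\alpha\Delta_\bbeta^\tr\bx$ is a polynomial of degree two in $\bmeta$, so it also drops out. The term $\ln\alpha$ contributes $2\alpha^{-3}$, which is bounded by a constant $C$ on the neighbourhood $N_\delta:=\{|\alpha-\ad|\le\delta_0,\ \norm{\bbeta-\bd}\le\delta_1\}$, once $\delta_0<\ad/2$. The only real work is the last term, $h(\bmeta):=\exp\rbr{\alpha\Delta_\bbeta^\tr\bx+(\alpha/\ad)\ln z}=\exp\rbr{\alpha\, u}$ with $u:=\Delta_\bbeta^\tr\bx+\ad^{-1}\ln z$.

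Differentiating $h$ three times by the chain rule gives a finite sum of terms. Each term is a product of $h$ with monomials in $u$, $\alpha$ and the coordinates of $\bx$, of total degree at most three. For instance, $\partial_\alpha^3 h=u^3h$, $\partial_\alpha^2\partial_{\beta_j}h=(2u x_j+\alpha u^2x_j)h$, and $\partial_{\beta_i}\partial_{\beta_j}\partial_{\beta_k}h=\alpha^3x_ix_jx_kh$. On $N_\delta$, we bound $|u|\le\delta_1\norm{\bx}+\ad^{-1}|\ln z|$ and $\alpha\le 3\ad/2$. Hence every such monomial is at most $K'(1+\norm{\bx}^3+|\ln z|^3)$, using $ab^2\le a^3+b^3$ and similar elementary inequalities. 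For the exponential factor,
\[
h=\exp\rbr{\alpha\Delta_\bbeta^\tr\bx}\,z^{\alpha/\ad}\le \exp\rbr{\tfrac{3\ad}{2}\delta_1\norm{\bx}}\,(z^{r}+z^{s}),
\]
with $r:=1-\delta_0/\ad$ and $s:=1+\delta_0/\ad$. Here $z^{t}\le z^r+z^s$ for all $t\in[r,s]$: on $z\le1$ use $z^r$, on $z\ge1$ use $z^s$. We then set $\delta:=\tfrac{3\ad}{2}\delta_1$. This gives the displayed $\dddot M$, uniformly over $N_\delta$, and with constants depending only on $\ad$ and on the chosen radii.

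Integrability is the step requiring care, though it is not hard. By independence of $Z$ and $\bX$, $\ex\dddot M(Y,\bX)$ factorizes into terms of the form $\ex\sbr{(1+\norm{\bX}^3)e^{\delta\norm\bX}}\,\ex\sbr{Z^r+Z^s}$ and $\ex\sbr{e^{\delta\norm\bX}}\,\ex\sbr{|\ln Z|^3(Z^r+Z^s)}$.

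The $Z$-factors are Gamma-type integrals $\int_0^\infty|\ln z|^jz^te^{-z}\,dz$. These are finite for $t=r>0$, since $\delta_0<\ad$ makes the logarithmic singularity at $0$ integrable, and for $t=s$.

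For the $\bX$-factors, \Cref{cond:mgf} gives $\ex e^{\bt^\tr\bX}<\infty$ for $\norm\bt\le\tau$, for some $\tau>0$. Then $e^{\delta\norm\bX}\le\sum_{\pm,\,j}e^{\pm\delta\sqrt p X_j}$, using $\norm\bx\le\sqrt p\max_j|x_j|$ and $e^{|a|}\le e^a+e^{-a}$. We shrink $\delta_1$ so that $\delta\sqrt p<\tau$. To absorb the polynomial factor we use $\norm\bx^3\le c_\varepsilon e^{\varepsilon\norm\bx}$, and keep $(\delta+\varepsilon)\sqrt p<\tau$. Shrinking the neighbourhood in this way is the only point where the order of choices matters: fix $\delta_0<\ad/2$ first, then choose $\delta_1$ small relative to $\tau$, and only then define $r$, $s$, $C$, $K$, $\delta$.
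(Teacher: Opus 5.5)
Your proof is correct and is essentially the paper's argument: both pass to $q(\bmeta \mid z, \bx)$ with $Z$ unit exponential and independent of $\bX$, bound the common factor $\exp(\alpha\Delta_{\bbeta}^\tr\bx)\,z^{\alpha/\ad}$ by $e^{c\norm{\bx}}(z^r+z^s)$ on a small box around $\bmeta_\dagger$, dominate the polynomial prefactors by $K(1+\norm{\bx}^3+\abr{\ln z}^3)$, and obtain integrability from independence and \Cref{cond:mgf}. The differences are cosmetic. You organize the derivatives through $h=\exp(\alpha u)$ instead of listing all four types, and you bound $e^{\delta\norm{\bX}}$ coordinatewise instead of by $\sum_{\bt\in\{-\delta,\delta\}^p}e^{\bt^\tr\bX}$. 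Your parametrization also yields exactly the exponents $z^r$, $z^s$ and $e^{\delta\norm{\bx}}$ of the statement, without the renaming of constants the paper's proof relies on.
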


        Hence, the assumptions of Theorem~5.41 of \citet{van2000asymptotic} are satisfied. Since $\mle$ is consistent by \Cref{thm:consistency}, and since $-\Hes^{-1} = \Fi^{-1}$ by \Cref{lem:fisher_information_matrix}, we conclude that
        \begin{align*}
            \sqrt n \rbr{\mle - \bmeta_\dagger}
            &= - \Hes^{-1} \frac1{\sqrt n} \sum_{i=1}^{n} \nabla_{\bmeta} m(\bmeta_\dagger \mid Y_i, \bX_i) + o_{\pr}(1) \\
            &= \Fi^{-1} \frac1{\sqrt n} \sum_{i=1}^{n} \nabla_{\bmeta} m(\bmeta_\dagger \mid Y_i, \bX_i) + o_{\pr}(1),
            \qquad n\to\infty.
        \end{align*}
        The summands are iid, centered by \textbf{item ii}, and have covariance matrix $\Fi$ by \Cref{lem:fisher_information_matrix}, so that the central limit theorem and Slutsky's lemma give
        \[
            \sqrt n \rbr{\mle - \bmeta_\dagger} \dto \Normal\rbr{\0, \Fi^{-1} \Fi \Fi^{-1}} = \Normal\rbr{\0, \Fi^{-1}},
            \qquad n \to \infty.
            \qedhere
        \]
    \end{proof}

    \begin{lproof}[Proof of \Cref{lem:fisher_information_matrix}]
        We apply the same change of variables from $y$ to $z = \bar{z}(y, \bx)$ as before. Since the substitution does not involve $\bmeta$, the derivatives of $\bmeta \mapsto m(\bmeta \mid Y, \bX)$ are those of $\bmeta \mapsto q(\bmeta \mid Z, \bX)$, where $Z = \bar{z}(Y, \bX)$ has a unit-exponential distribution and is independent of $\bX$.
        From \Cref{eq:log_likelihood_reparametrization}, we have
        \[
            \nabla_{\bmeta} q(\bmeta_\dagger \mid Z,\bX) =
            \begin{pmatrix}
                \ad^{-1} + \ad^{-1}(1-Z) \, \ln Z \\[2mm]
                \ad \bX(1-Z)
            \end{pmatrix}.
        \]

        \emph{Moments of $Z$.}
        The expectations involving $Z$ below are all derivatives of the gamma function $\Gamma(s) = \int_0^\infty z^{s-1} e^{-z} \, dz$, since $\ex[Z^{s-1} (\ln Z)^k] = \Gamma^{(k)}(s)$ for $s > 0$ and $k \in \cbr{0, 1, 2}$. Differentiating $\Gamma(s+1) = s \, \Gamma(s)$ gives $\Gamma'(s+1) = \Gamma(s) + s \, \Gamma'(s)$ and $\Gamma''(s+1) = 2 \, \Gamma'(s) + s \, \Gamma''(s)$. Starting from $\Gamma(1) = 1$, $\Gamma'(1) = -\gamma$ and $\Gamma''(1) = \gamma^2 + \pi^2/6$, where $\gamma$ is the Euler--Mascheroni constant, these recursions yield $\Gamma(2) = 1$, $\Gamma(3) = 2$ and
        \begin{align*}
            \Gamma'(2) &= 1 - \gamma,
            & \Gamma''(2) &= \frac{\pi^2}{6} - 1 + (1 - \gamma)^2, \\
            \Gamma'(3) &= 3 - 2\gamma,
            & \Gamma''(3) &= \frac{\pi^2}{3} + 2 - 6\gamma + 2\gamma^2.
        \end{align*}

        The Fisher information admits the block-matrix form
        \[
            \Fi = \ex\sbr{\nabla_{\bmeta} q(\bmeta_\dagger \mid Z,\bX) \, \nabla_{\bmeta} q(\bmeta_\dagger \mid Z,\bX)^\tr}
            = \begin{pmatrix}
                \cI_{\alpha\alpha} & \cI_{\alpha\bbeta}^\tr \\
                \cI_{\alpha\bbeta} & \cI_{\bbeta\bbeta}
            \end{pmatrix}
        \]
        along the components of $\bmeta = (\alpha, \bbeta)$, with $\cI_{\alpha\bbeta}$ a column vector in $\Rp$. We calculate each block separately.

        Since $\ex[(1-Z)^2] = \var(Z) = 1$, we have
        \begin{align*}
            \cI_{\bbeta\bbeta} & = \ad^2 \ex\sbr{\bX\bX^\tr(1-Z)^2} \\
            &= \ad^2 \ex[\bX\bX^\tr]\, \ex[(1-Z)^2]\\
            &= \ad^2\ex[\bX\bX^\tr].
        \end{align*}

        Since $\ex[1-Z] = 0$, and since \(\ex[\ln Z] = \Gamma'(1) = -\gamma\), \(\ex[Z\ln Z] = \Gamma'(2) = 1-\gamma\) and \(\ex[Z^2\ln Z] = \Gamma'(3) = 3-2\gamma\), we find
        \begin{align*}
            \cI_{\alpha\bbeta} & = \ex[\bX]\, \ex\sbr{(1-Z)^2\ln Z} \\
            & = (1 - \gamma) \ex[\bX].
        \end{align*}

        From $\ex[(1-Z)\ln Z] = \Gamma'(1) - \Gamma'(2) = -1$, we find
        \begin{align*}
            \cI_{\alpha\alpha} & = \frac1{\ad^2} \ex\sbr{\bigl(1+(1-Z)\ln Z\bigr)^2} \\
            & = \frac1{\ad^2} \bigl( 1 + 2 \ex[(1-Z)\ln Z] + {\ex[(1-Z)^2(\ln Z)^2]}\,\bigr)\\
            & = \frac1{\ad^2} \bigl(-1 + {\ex\sbr{(\ln Z)^2 - 2 \,Z(\ln Z)^2 + Z^2(\ln Z)^2}}\,\bigr)\\
            & = \frac1{\ad^2} \bigl(-1 + {\Gamma''(1) - 2 \, \Gamma''(2) + \Gamma''(3)}\,\bigr)\\
            & = \frac1{\ad^2} \left(\frac{\pi^2}{6} + (1 - \gamma)^2 \right).
        \end{align*}

        Combining the three blocks yields the stated form of $\Fi$.

        \emph{Expected Hessian.}
        Differentiating \Cref{eq:score_general} once more and evaluating at $\bmeta = \bmeta_\dagger$, where $\Delta_{\bbeta} = \0$, so that the argument of $g'$ and $g''$ reduces to $\ln Z$, with $g'(\ln Z) = 1 - Z$ and $g''(\ln Z) = -Z$, we obtain
        \begin{align*}
            \partial^2_{\alpha\alpha} q(\bmeta_\dagger \mid Z, \bX)
            &= -\ad^{-2} \rbr{1 + Z (\ln Z)^2}, \\
            \nabla_{\bbeta} \, \partial_\alpha q(\bmeta_\dagger \mid Z, \bX)
            &= \bX \rbr{1 - Z - Z \ln Z}, \\
            \nabla^2_{\bbeta\bbeta} \, q(\bmeta_\dagger \mid Z, \bX)
            &= -\ad^2 \bX \bX^\tr Z.
        \end{align*}
        Taking expectations, with $\ex[Z] = \Gamma(2)$, $\ex[Z \ln Z] = \Gamma'(2)$ and $\ex[Z (\ln Z)^2] = \Gamma''(2)$, gives
        \begin{align*}
            \ex\sbr{\partial^2_{\alpha\alpha} q(\bmeta_\dagger \mid Z, \bX)}
            &= -\frac{1}{\ad^2} \rbr{\frac{\pi^2}{6} + (1 - \gamma)^2} = -\cI_{\alpha\alpha}, \\
            \ex\sbr{\nabla_{\bbeta} \, \partial_\alpha q(\bmeta_\dagger \mid Z, \bX)}
            &= -(1 - \gamma) \ex[\bX] = -\cI_{\alpha\bbeta}, \\
            \ex\sbr{\nabla^2_{\bbeta\bbeta} \, q(\bmeta_\dagger \mid Z, \bX)}
            &= -\ad^2 \ex[\bX \bX^\tr] = -\cI_{\bbeta\bbeta},
        \end{align*}
        that is, $\Hes = -\Fi$. This is an instance of the information identity. Computing both sides, as done here, only ever takes the expectation of derivatives evaluated pointwise, and so avoids the interchange of differentiation and integration on which the general identity rests.

        That $\Fi$ and $\Hes$ are well defined is the hypothesis $\ex[\norm{\bX}^2] < \infty$. Every entry of the random matrices above is of the form $X_i X_j \, h(Z)$, $X_j \, h(Z)$ or $h(Z)$, with $h$ a polynomial in $z$ and $\ln z$. Since $\ex\abr{h(Z)}$ is finite for every such $h$ and since $Z$ and $\bX$ are independent, such an entry is integrable as soon as $\ex\abr{X_i X_j}$ and $\ex\abr{X_j}$ are finite, which the hypothesis gives through the Cauchy--Schwarz inequality.

        \emph{Positive definiteness.}
        We now show that $\Fi$ is positive definite under \Cref{cond:identifiability}. For a symmetric matrix $\mA$, we write $\mA \succ 0$ and $\mA \succeq 0$ to say that $\mA$ is positive definite and positive semidefinite, respectively. Let
        \[
            M := \ex\sbr{\bX}^\tr \ex\sbr{\bX\bX^\tr}^{-1}\ex\sbr{\bX} \mtext{and} \Sigma := \ex\sbr{\bX\bX^\tr} - \ex\sbr{\bX}\ex\sbr{\bX}^\tr.
        \]
        Since $\Sigma \succeq 0$, we have
        \[
            \rbr{\ex\sbr{\bX\bX^\tr}^{-1}\ex\sbr{\bX}}^{\tr} \Sigma \rbr{\ex\sbr{\bX\bX^\tr}^{-1}\ex\sbr{\bX}} \geq 0
        \]
        and thus
        $
            M - M^2 \geq 0
        $,
        which implies that $0 \leq M \leq 1$. By the Schur complement criterion, a real symmetric block matrix is positive definite as soon as one of its diagonal blocks and the Schur complement of that block are both positive definite. We apply the criterion to the lower right block $\ad^2 \ex\sbr{\bX \bX^\tr}$ of $\Fi$. By the remark after \Cref{cond:mgf}, \Cref{cond:identifiability} implies that $\ex\sbr{\bX \bX^\tr} \succ 0$, so that block is positive definite. Its Schur complement in $\Fi$ is the scalar
        \begin{align*}
            \frac{1}{\ad^2}\rbr{\frac{\pi^2}{6} + \rbr{1-\gamma}^2} - \frac{\rbr{1-\gamma}^2}{\ad^2} \ex\sbr{\bX}^\tr \ex\sbr{\bX \bX^\tr}^{-1} \ex\sbr{\bX}
            & = \frac{1}{\ad^2} \rbr{\frac{\pi^2}{6} + \rbr{1-\gamma}^2 \rbr{1- M}} \\
            & > 0;
        \end{align*}
        the inequality holds because $M \leq 1$. Hence $\Fi \succ 0$, so that $\Hes = -\Fi$ is negative definite, and a definite matrix is nonsingular.
    \end{lproof}

    \begin{lproof}[Proof of \Cref{lem:domination_third_derivatives}]
        Recall $q(\bmeta \mid z, \bx) = m(\bmeta \mid \bar{y}(z, \bx), \bx)$ in \Cref{eq:qbmeta}. Since $m(\bmeta \mid y, \bx) = q(\bmeta \mid \bar{z}(y, \bx), \bx)$ and since $\bar{z}(y, \bx)$ does not depend on the free parameter $\bmeta$ but only on the true parameter $\bmeta_{\dagger}$, we can calculate and bound the third-order partial derivatives of $\bmeta \mapsto m(\bmeta \mid y, \bx)$ via those of $\bmeta \mapsto q(\bmeta \mid z, \bx)$. To show that the bound is integrable, we can then take the expectation with respect to $(Z, \bX)$, where $Z$ has a unit-exponential distribution and is independent of $\bX$. For this reason, we immediately evaluate the third-order partial derivatives in the pair $(Z, \bX)$. Direct calculation yields
        \begin{align*}
            \partial_{\alpha\alpha\alpha}^3 q(\bmeta \mid Z,\bX)
            &= 2\alpha^{-3} -
            \rbr{\Delta_{\bbeta}^\tr \, \bX + \ad^{-1}\ln Z}^3
            \exp\rbr{\alpha\,\Delta_{\bbeta}^\tr \,\bX}
            Z^{\alpha/\ad},
            \\
            \partial_{\alpha\alpha\beta_i}^3 q(\bmeta \mid Z,\bX)
            &=
            -X_i \exp\rbr{\alpha\,\Delta_{\bbeta}^\tr \,\bX}
            Z^{\alpha/\ad}
            \sbr{
                2 \rbr{ \Delta_{\bbeta}^\tr \,\bX + \ad^{-1}\ln Z} + \alpha
                \rbr{\Delta_{\bbeta}^\tr \,\bX +\ad^{-1}\ln Z}^2
            },
            \\
            \partial_{\alpha\beta_i\beta_j}^3 q(\bmeta \mid Z,\bX)
            &= -X_i X_j \exp\rbr{\alpha\,\Delta_{\bbeta}^\tr \,\bX}
            Z^{\alpha/\ad}
            \sbr{
                2\alpha + \alpha^2
                \rbr{\Delta_{\bbeta}^\tr \,\bX + \ad^{-1}\ln Z}
            },
            \\
            \partial_{\beta_i\beta_j\beta_k}^3 q(\bmeta \mid Z,\bX)
            &=
            -\alpha^3X_iX_jX_k
            \exp\rbr{\alpha\,\Delta_{\bbeta}^\tr \,\bX}
            Z^{\alpha/\ad}.
        \end{align*}

        Let $\delta > 0$ be small enough such that $\ex[\exp(\delta \norm{\bX})]$ is finite; such a choice is possible thanks to \Cref{cond:mgf}, as will be explained below.
        Choose $0 < r < \ad < s < \infty$, and define the neighborhood $\Ud$ of $\bmeta_\dagger$ by
        \[
            \Ud = [r,s] \times \cbr{\bbeta \in \Rp : \norm{\bbeta - \bd} \le \delta/(2s)}.
        \]
        For all $\bmeta \in \Ud$, we then have the bound
        \[
            \exp\rbr{\alpha \Delta_{\bbeta}^\tr \bX} Z^{\alpha/\ad}
            \leq \exp\rbr{(\delta/2) \norm{\bX}} \rbr{Z^{r/\ad} + Z^{s/\ad}}
            =: g(Z, \bX).
        \]

        Each of the third-order partial derivatives is thus bounded by an expression in which the term on the left is replaced by the term on the right. Since $\abr{X_i} \leq \norm{\bX}$, we have
        \begin{align*}
            \abr{\partial_{\alpha\alpha\alpha}^3 q(\bmeta \mid Z,\bX)}
            & \leq 2r^{-3} +
            \rbr{\delta \norm{\bX} + \ad^{-1} \abr{\ln Z}}^3 g(Z, \bX)
            ,
            \\
            \abr{\partial_{\alpha\alpha\beta_i}^3 q(\bmeta \mid Z,\bX)}
            & \leq
            \norm{\bX}
            \sbr{
                2 \rbr{ \delta \norm{\bX} + \ad^{-1} \abr{\ln Z}} +
                s \rbr{\delta \norm{\bX} +\ad^{-1}\abr{\ln Z}}^2
            } g(Z, \bX),
            \\
            \abr{\partial_{\alpha\beta_i\beta_j}^3 q(\bmeta \mid Z,\bX)}
            & \leq \norm{\bX}^2
            \sbr{
                2s + s^2
                \rbr{\delta \norm{\bX} + \ad^{-1} \abr{\ln Z}}
            } g(Z, \bX),
            \\
            \abr{\partial_{\beta_i\beta_j\beta_k}^3 q(\bmeta \mid Z,\bX)}
            & \leq
            s^3 \norm{\bX}^3 g(Z, \bX).
        \end{align*}
        For $u,v \ge 0$ and $a,b > 0$ we have $u^a \le 1 + u^{a+1}$ and $u^a v^b \le \max(u,v)^{a+b} \le u^{a+b}+v^{a+b}$.
        Therefore, all factors multiplying $g(Z, \bX)$ in the four inequalities in the display are bounded from above by $K \, (1 + \norm{\bX}^3 + \abr{\ln Z}^3)$ for some positive constant $K$ depending only on $(\ad,\delta,s)$. It follows that all third-order partial derivatives of $\bmeta \mapsto q(\bmeta \mid Z, \bX)$ are bounded in absolute value by
        \begin{equation}
            \label{eq:3bound}
            C + K \, \rbr{1 + \norm{\bX}^3 + \abr{\ln Z}^3} g(Z, \bX),
        \end{equation}
        and this for all $\bmeta \in \Ud$; the constants $C$ and $K$ only depend on $(\ad,\delta,r,s)$. The bound has the same form as the one in the statement of \Cref{lem:domination_third_derivatives} up to a renaming of the constants.

        Since $Z$ and $\bX$ are independent and since $Z$ is exponentially distributed, integrability of the bound in \Cref{eq:3bound} will follow as soon as we can show that we can choose $\delta > 0$ such that $\ex[\exp(\delta \norm{\bX})]$ is finite; then $\ex[\exp((\delta/2) \norm{\bX})]$ and $\ex[\norm{\bX}^a \exp((\delta/2) \norm{\bX})]$ will be finite for all $a > 0$ too. Since the Euclidean norm of a vector is bounded by its sum norm, we have
        \begin{align*}
            \exp\rbr{\delta \norm{\bX}}
            &\le \exp \rbr{\delta |X_1| + \dots + \delta |X_p|} \\
            &\le \max_{\beps \in \cbr{-1,1}^p} \exp \rbr{\delta \eps_1 X_1 + \cdots + \delta \eps_p X_p} \\
            &\le \sum_{\bt \in \cbr{-\delta,\delta}^p} \exp \rbr{ \bt^\tr \bX }.
        \end{align*}
        The number of terms in the sum is $2^p$, and for sufficiently small $\delta > 0$, \Cref{cond:mgf} guarantees that $\ex[\exp(\bt^\tr \bX)]$ is finite for every $\bt \in \cbr{-\delta,\delta}^p$. It then follows that $\ex[\exp(\delta\norm{\bX})]$ is finite too for sufficiently small $\delta$. This concludes the proof.
    \end{lproof}

\end{document}